\documentclass[11pt,reqno]{amsart}
\usepackage{amssymb,mathrsfs,graphicx,subfigure, enumerate}
\usepackage{amsmath,amsfonts,amssymb,amscd,amsthm,bbm}
\usepackage{graphicx,colortbl}
\usepackage{extpfeil}

\def\bP {\mathbf{P}}

\def\bS {\mathbf{S}}
\def\bT {\mathbf{T}}

\def\fH {\mathfrak{H}}
\def\fS {\mathfrak{S}}

\def\cD {\mathcal{D}}

\def\cP {\mathcal{P}}

\def\cS {\mathcal{S}}

\def\cV {\mathcal{V}}

\def\a {{\alpha}}

\def\de {{\delta}}

\def\ka {{\kappa}}

\def\si {{\sigma}}
\def\Si {{\Sigma}}

\def\om {{\omega}}

\def\d {{\partial}}

\def\Dlt {{\Delta}}

\def\rstr {{\big |}}
\def\indc {{\bf 1}}

\def\la {\langle}
\def\ra {\rangle}
\def \La {\bigg\langle}
\def \Ra {\bigg\rangle}

\newcommand{\Div}{\operatorname{div}}

\newcommand{\RE}{\operatorname{Re}}

\newcommand{\bbr}{\mathbb R}
\newcommand{\bbc}{\mathbb C}

\newcommand{\bbz}{\mathbb Z}

\newcommand{\bbt} {\mathbb T}

\newcommand{\ba}{\begin{aligned}}
\newcommand{\ea}{\end{aligned}}

\newcommand{\be}{\begin{equation}}
\newcommand{\ee}{\end{equation}}

\newcommand{\lb}{\label}

\newtheorem{Thm}{Theorem}[section]
\newtheorem{Lem}{Lemma}[section]

\newtheorem{Prop}{Proposition}[section]
\newtheorem{Rmk}{Remark}[section]
\newtheorem{Def}{Definition}[section]

\numberwithin{equation}{section}

\begin{document}

\title[The mean-field limit of the Schr\"{o}dinger-Lohe model]{The mean-field limit of the Schr\"{o}dinger-Lohe model\\ and emergent dynamics}

\author[F. Golse]{Fran\c cois Golse}
\address[F. Golse]{CMLS, \'Ecole polytechnique, IP Paris \& CNRS, 91128 Palaiseau Cedex, France}
\email{francois.golse@polytechnique.edu}

\author[S.-Y. Ha]{Seung-Yeal Ha}
\address[S.-Y. Ha]{
Department of Mathematical Sciences and Research Institute of Mathematics, \newline
Seoul National University, Seoul, 08826, Republic of Korea}
\email{syha@snu.ac.kr}

\thanks{\textbf{Acknowledgment.} The work of S.-Y. Ha is supported by the National Research Foundation of Korea (NRF-RS2025-00514472)}

\begin{abstract}
The Schr\"odinger-Lohe (SL) model is a coupled system of nonlinear Schr\"odin- ger equations describing the temporal-spatial evolution of the component wave functions, and it corresponds to the infinite-dimensional counterpart of the Lohe matrix model for quantum synchronization. In this paper, we study a rigorous mean-field limit of the SL model and  provide a quantitative estimate on the fluctuations between one-marginal distribution and one particle distribution in 2-Wasserstein distance. For the derived kinetic SL equation, we present sufficient conditions leading to the complete and practical synchronizations.
\end{abstract}

\keywords{Complete synchronization, fluctuation estimates, kinetic equation, mean-field limit, practical synchronization, Schr\"{o}dinger-Lohe model, synchronization}

\subjclass{82C10, 82C22, 35B37}

\date{\today}

\maketitle



\section{Introduction} \label{sec:1}
\setcounter{equation}{0}

Collective behaviors of a complex system are often observed in physical and biological systems, e.g., flashing of fireflies \cite{B-B}, synchronization of weakly coupled oscillators \cite{A-B, D-B1, H-K-P-Z, St}, 
heart beat regulation by pacemaker cells \cite{Pe, P-R}, arrays of Josephson junctions \cite{S-M, W-S}, etc.  However, in spite of their omnipresence,  model-based study of these phenomena was first begun by A. Winfree \cite{Wi2, Wi1} 
and Y. Kuramoto \cite{Ku1, Ku2} only a half century ago. After Winfree and Kuramoto's seminal works, several mathematical models were proposed in literature. Among them, we are mainly interested in the so-called Schr\"odinger-Lohe 
(henceforth SL) model introduced by M. Lohe in \cite{Lo-1} as a toy model for quantum synchronization. To set up the stage, we begin with the Kuramoto model for weakly coupled limit-cycle oscillators. 

Let $\theta_k = \theta_k(t)$ be the phase of the $k$-th Kuramoto oscillator. Then, its temporal dynamics is governed by the Cauchy problem for the Kuramoto model:
\begin{equation}
\begin{cases} \label{KM}
\displaystyle {\dot \theta}_k = \nu_k + \frac{\kappa}{N} \sum_{l=1}^{N} \sin(\theta_l - \theta_k), \quad t > 0, \\
\displaystyle \theta_k \Big|_{t = 0} = \theta_k^0, \quad  k \in [N]:=\{1, \cdots, N \}, 
\end{cases}
\end{equation}
where $\nu_k$ and $\kappa$ denote the natural (intrinsic) frequency of the $k$-th oscillator and the coupling strength, respectively. The emergent dynamics of \eqref{KM} has been extensively investigated in literature \cite{C-H-J-K, C-S, 
D-X, D-B1, D-B, H-K-R, H-N-P2,M-S1, M-S2, M-S3,V-M1, V-M2}. Recently, several  higher-dimensional and infinite-dimensional counterparts of \eqref{KM} have been proposed in control and statistical physics communities, to name a few,  
swarming models on spheres \cite{C-G-O, C-H5, Lo-2, Ma1,Ol}, synchronization models on Riemannian manifolds \cite{De, G-H, H-K-R1, H-K-R2, H-R, M-T-G, R-L-W}, quantum mechanical models for synchronization 
\cite{C-C-H,G-G,G-C, H-L-G,X-T, Z-S} and so on. 

Next, we briefly discuss the infinite-dimensional counterpart of \eqref{KM}, namely ``{\it the SL model} ''. Consider a complete quantum network  \cite{Ki} consisting of $N$ nodes (vertices), and assume that quantum SL oscillators are 
located at nodes, and interact each other through laser pulses. Let $\psi_k = \psi_k(t, x)\in\bbc$ be the wave function of the $k$-th SL oscillator with unit mass $m = 1$ at position $x \in \bbt^d := \bbr^d /\bbz^d$, at time $t \in \bbr$. Let the pairing 
$\langle \cdot~ |~\cdot \rangle$ and $\|\cdot\|_\fH$ denote the standard inner product in $\fH := L^2(\bbt^d)$, and the corresponding norm in the Hilbert space $\fH$: for $\psi_l, \psi_k \in \fH$, 
\[ 
\langle \psi_l |\psi_k \rangle := \int_{\bbt^d}  \psi_l(x) \overline{\psi_k(x)} dx \quad \mbox{and} \quad \| \psi_k \|_{\fH}  := \sqrt{\langle \psi_k | \psi_k \rangle}. 
\]
Let $C(\bbt^d)$ be the space of complex-valued continuous functions on the $d$-dimensional tori $\bbt^d$, and let $\bS\fH:=\{\psi\in\fH~:~\|\psi\|_\fH=1\}$ denote the unit sphere in the Hilbert space $\fH$. 
For $V \in C(\bbt^d)$, we set 
\[ 
||V||_{L^{\infty}} = \|V \|_{L^{\infty}(\bbt^d)} :=  \max_{x \in \bbt^d} |V(x)|.
\]
Then, the dynamics of $\psi_k$ is governed by the following Cauchy problem for the SL model: 
\begin{equation} \label{SL}
\begin{cases}
\displaystyle {\mathrm i}\hbar\d_t\psi_k=(H_k+\om\star_x|\psi_k|^2)\psi_k+\frac{{\mathrm i}\hbar\kappa}{2N}\sum_{l=1}^N(\psi_l-\la\psi_k| \psi_l\ra\psi_k)\,, \quad t > 0,~~x \in \bbt^d,
\\
\displaystyle \psi_k\rstr_{t=0}=\psi_k^{0}\,, \quad  ||\psi_k^{0}||_{\fH} = 1,  \quad k \in [N],
\end{cases}
\end{equation}
where  $\kappa$ and $\hbar$ are the nonnegative coupling strength and the reduced Planck constant respectively, while $H_k$ is the Hamiltonian of the $k$-th quantum system:
\begin{equation} \label{A-0-0}
H_k := -\tfrac12\hbar^2\Dlt_x + V_k.
\end{equation}
We also assume that convolution kernel $\omega$ is uniformly bounded throughout the paper. 

In the absence of the Hartree term $\om\star_x|\psi_k|^2$, the system \eqref{SL} was first introduced in \cite{Lo-1} as the infinite-dimensional counterpart of the Lohe matrix model \cite{Lo-2} on the unitary group, and its synchronization 
properties have been exensively studied in a series of papers \cite{A-M, B-H-K-T, C-C-H, C-H5, H-H, H-H-K1, H-H-K2}. Moreover, the nonlinear term on the right-hand side of \eqref{SL} is sub-linear so that 
the well-posedness of \eqref{SL} can be derived via the standard arguments for nonlinear Schr\"{o}dinger-type equations (see Appendix A in \cite{H-H} for details). We also refer to  \cite{G-G, G-C, Ki} for the potential applications 
of this model in quantum computing and quantum information.  In this paper, we address the following two questions:
\vspace{0.1cm}
\begin{itemize}
\item
(Q1):~What is the kinetic equation which can be obtained from \eqref{SL} via the mean-field limit as $N \to \infty$? 
\vspace{0.2cm}
\item
(Q2):~Under what conditions on initial data and system parameters, can we show synchronization for the derived kinetic equation?
\end{itemize}
Consider the special case:
\begin{equation*} \label{A-0}
\omega = 0, \quad H_k = \hbar \nu_{k}~:~\mbox{constant} \qquad \psi_k(t, x) = e^{-{\mathrm i} \theta_{k}}, \quad (t, x) \in \bbr_+ (:= (0, \infty)) \times  \bbt^d.
\end{equation*} 
In this case, system $\eqref{SL}_1$ reduces to the Kuramoto model \eqref{KM}. This is why we call the SL model the infinite-dimensional counterpart of the Kuramoto model. Let $U(t)$ and $\phi_k(t)$ be the propagator for the free 
Schr\"{o}dinger equation with $\hbar = 1$ and the transformed wave functions, respectively:
\[ 
U(t) \psi^{0} := e^{\frac{{\mathrm i} t  \Delta}{2}} \psi^{0} \quad \mbox{and} \quad \phi_k(t):=U(\hbar t)^*\psi_k(t), \quad k \in [N]. 
\]
Then, the pair $(\phi_k, V_k)$ satisfies the transformed SL model (see Section \ref{sec:2.3} for details):
\begin{equation} \label{A-1}
\begin{cases}
\displaystyle \d_t\phi_k =\tfrac1{i\hbar}U(\hbar t)^*(V_k+\om\star_x|U(\hbar t)\phi_k|^2)U(\hbar t)\phi_k\!+\!\frac{\kappa}{2N}\sum_{l=1}^N \Big(\phi_l\!-\!\la\phi_k|\phi_l\ra\phi_k \Big ), \\
\displaystyle \d_t V_k = 0, \quad t > 0, \quad x \in \bbt^d, \quad k \in [N], \\
\displaystyle \phi_k\rstr_{t=0}=\psi_k^{0}\,, \quad V_k \rstr_{t=0}= V^0_k.
\end{cases}
\end{equation}
Note that the (unbounded) operator $\Delta$ does not appear in \eqref{A-1} any more. \newline

The main results of this paper are two-fold. First, we present a rigorous derivation of the mean-field, kinetic SL equation for the transformed SL model \eqref{A-1}. The exact form of the mean-field equation can be identified using 
the standard BBGKY hierarchy  argument from statistical physics, assuming the existence of weak limits for the marginal probability density functions and molecular chaos assumption (see Section \ref{sec:3}). Motivated by recent 
progress \cite{B-G-M, G-P, G-M-P} on the mean-field limit for Schr\"odinger type systems, we provide a quantitative fluctuation estimate using the $2$-Wasserstein metric (see Section \ref{sec:4}). More precisely,  let $X_N$ and 
$F_N$ be the vector field associated with system \eqref{A-1} and the $N$-oscillator distribution function on the $N$-oscillator phase space $\bS\fH^N\times C(\bbt^d)^N$, respectively. 

Then, $N$-oscillator distribution function $F_N$ satisfies the Cauchy problem for the Liouville equation: 
\begin{align}
\left\{
\begin{aligned}  \label{A-1-1}
& \d_tF_N+\Div_{(\phi, V)}(F_NX_N)=0\,, \quad (t, \phi, V) \in \bbr_+ \times (\bS\fH)^N \times C(\bbt)^N, 
\\
& F_N\rstr_{t=0}=F_N^{0}\in\cP((\bS\fH)^N\times C(\bbt^d)^N)\,,
\end{aligned}
\right.
\end{align}
where $\cP(Z)$ designates the set of Borel probability measures on the complete  separable metric space $Z$.

On the other hand, let $f = f(t, d\phi dV)$ be the one-oscillator ``distribution function'' (probability measure, in fact) for the infinite ensemble of wave functions on the (one-oscillator) phase space $\bS\fH \times C(\bbt^d)$ at time $t$. Then, the 
kinetic SL equation for $f$ reads as follows (see Section \ref{sec:3}):
\begin{align}
\begin{aligned} \label{A-2}
& \d_tf +\Div_\phi\left(f\left(\tfrac1{i\hbar}U(\hbar t)^*(V+\om\star_x|U(\hbar t)\phi|^2)U(\hbar t)\phi\right)\right) \\
& \hspace{2cm} + \tfrac12\kappa \Div_\phi\left(f\int_{\bS\fH\times C(\bT^d)}(\tilde\phi-\la\phi|\tilde\phi\ra\phi)f(t,d\tilde\phi d\tilde V)\right)=0.
\end{aligned}
\end{align}
Let $f^{0} \in\cP_2(\bS\fH\times C(\bbt^d))$ (the set of Borel probability measures on $\bS\fH\times C(\bbt^d)$ with finite 2nd moment), and let $f$ be the solution to \eqref{A-2} with initial datum $f^{0}$. On the other hand,  let 
$F_N$ be the solution of the Liouville equation \eqref{A-1-1} with the initial datum $F_N^{0}=(f^{0})^{\otimes N}$. Then, the $2$-Wasserstein distance between $f$ and the first marginal $F_{N:1}$ of $F_N$ satisfies the following estimate:~for $T \in (0, \infty)$,
\[  
W_2(F_{N:1}(t),f(t)) \leq \frac{C(T)}{\sqrt{N}} \to 0, \quad \mbox{as $N \to \infty, \quad$ for all $t \in (0, T),$} 
\] 
(see Theorem \ref{T4.1} in Section \ref{sec:4}). This establishes the mean-field limit and specifies the convergence rate for that limit. 

Second, we deal with the emergent dynamics of \eqref{A-2}. For this, we consider an ensemble of zero identical Hamiltonians and Hartree force, and pick the density function $f$ of the form (mono-kinetic ansatz):
\[ 
f(t, d\phi dV) = \rho(t,d\phi)\delta_0(dV).
\]
Then, the local mass density $\rho \equiv \rho(t, d\phi) := \int_{C(\bT^d)}  f(t, d\phi dV)$ satisfies the nonlocal continuity equation:
\begin{equation} \label{A-3}
\d_t \rho + \frac{\kappa}{2} \Div_\phi\left(\rho \int_{\bS\fH} K(\phi, \tilde \phi) \rho(t,d\tilde \phi)\right)=0, \quad (t, \phi) \in \bbr_+ \times \bS\fH,
\end{equation}
where the kernel $K$ is given by the following relation:
\[  K(\phi, {\tilde \phi}) := \tilde\phi-\la\phi|\tilde\phi\ra\phi. \]
For the synchronization estimate of \eqref{A-3}, we consider the diameter of the $\phi$-support of $\rho$:
\[   
{\mathcal D}[\rho(t)] := \sup \Big \{ \| \phi - {\tilde \phi} \|_{\fH}:~ \phi, {\tilde \phi} \in \mbox{supp}~\rho(t,\cdot)  \Big \} \leq 2.  
\]
Suppose that the coupling strength $\kappa$ and the initial datum $\rho^0$ satisfy
\[  
\kappa > 0, \quad {\mathcal D}[\rho^0] <  \frac{1}{2}, \quad \int_{\bS\fH} \rho^0(d\phi)= 1.
\]
Then one can show that ${\mathcal D}[\rho(t)]$ tends to zero exponentially fast (complete synchronization, Theorem \ref{T5.1}):
\[    
{\mathcal D}[\rho(t)]  \leq  \frac{\mathcal D[\rho^0]}{(1-2\mathcal D[\rho^0])e^{\frac{\kappa}{2} t}+2\mathcal D[\rho^0]}, \quad t \geq 0. 
\]
On the other hand, for an ensemble of nonidentical Hamiltonians, we assume that the coupling strength, one-body potentials and initial datum satisfy 
\[
\begin{cases}
& P_\infty :=  \sup \Big \{ \|V \|_{L^{\infty}} \Big |~(\phi, V)  \in \mbox{supp}(f^0(\cdot)) \Big \} < \infty, \\
& \alpha := \tfrac1\hbar\sup \Big \{ \|V_1 - V_2 \|_{L^{\infty}} \Big |~(\phi_1, V_1)~\mbox{and}~(\phi_2, V_2) \in \mbox{supp}(f(t, \cdot)) \Big \} < \infty,  \\
& {\mathcal D}[f_\kappa^0] <\xi_2(\a/\ka), \quad \int_{\bS\fH} \rho_k^0(d\phi) = 1, 
\end{cases}
\]
where $\xi_2$ is the larger root of a quadratic equation defined in \eqref{NNN-1}, and ${\mathcal D}[f_\kappa]$ is similarly defined as $\phi$-support of $\rho_{f_\kappa}$. Then, we can derive the practical synchronization (see Theorem \ref{T6.1}):
\[
\lim_{\ka\to \infty}\varlimsup_{t\to \infty} {\mathcal D}[f_\kappa(t)]=0.
\]

\vspace{0.5cm}

The rest of this paper is organized as follows. In Section \ref{sec:2}, we study a priori estimates of the SL model \eqref{SL} such as the $L^2$-conservation law, complete and practical synchronizations for the SL model, together with a reformulation \eqref{A-1} of the SL model. In Section \ref{sec:3}, we study the Liouville equation for the $N$-oscillator distribution and the derivation of the kinetic SL equation from the Liouville equation via the BBGKY hierarchy argument, and then discuss the existence and uniqueness theory for the kinetic SL equation \eqref{A-2}. In Section \ref{sec:4}, we provide a quantitative estimate on the $2$-Wasserstein distance between the single-particle distribution that is the solution of the kinetic SL equation and the first marginal of the $N$-particle distribution. In Section \ref{sec:5} and Section \ref{sec:6}, we study two types of emergent dynamics (complete synchronization v.s. practical synchronization) for the kinetic SL equation.  Finally, Section \ref{sec:7} is devoted to the brief summary of our main results and some remaining issues for a future work. In Appendix \ref{S-CL}, we recall the basic Cauchy-Lipschitz theory for systems of abstract  ordinary differential equations on Banach spaces, while in Appendix \ref{SLiouv}, we present an existence theory of a Lagrangian weak solution for the Liouville equation on the $N$-oscillator phase space.


\section{Preliminaries} \label{sec:2}


In this section, we study the emergent properties of the SL model \eqref{SL}. First, we recall the $L^2$-conservation of each wave function $\psi_k$ and the definitions of complete and of practical synchronizations
for \eqref{SL}. In Section \ref{sec:2.1} and Section \ref{sec:2.2}, we present formal computations and estimates in order to keep the exposition as clear as possible, and then in Section \ref{sec:2.3}, we propose one easy way of making
the estimates obtained in Section  \ref{sec:2.1} and Section \ref{sec:2.2} be rigorous. Finally, we shall conclude with a statement on the practical synchronization for \eqref{SL} in Theorem \ref{T-PractiSynchroSL}.

\subsection{$L^2$-conservation law} \label{sec:2.1}
In this subsection, we show that the $L^2$-norm is conserved along the trajectories of \eqref{SL} as follows.
\begin{Lem}\label{L2.1}
\emph{($L^2$-conservation)}
 For $T \in (0, \infty]$, let $\Psi := \{ \psi_k \}$ be a smooth solution to \eqref{SL} in the time-interval $[0, T)$. Then, one has
\[  
|| \psi_k(t) ||_{\fH}= 1, \quad \text{ for each }t \in [0, T)\text{ and each } k \in [N]. 
\]
\end{Lem}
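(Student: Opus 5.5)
Since the Lohe coupling ties $\|\psi_k\|_\fH$ to itself, conservation is not automatic; I plan to derive a closed ODE for $\|\psi_k\|^2_\fH$ and conclude by uniqueness. Set $n_k(t):=\|\psi_k(t)\|_\fH^2=\la\psi_k|\psi_k\ra$. For a smooth solution,
\[
\tfrac{d}{dt}n_k = 2\RE\la \d_t\psi_k|\psi_k\ra ,
\]
and I substitute $\d_t\psi_k = \tfrac{1}{{\mathrm i}\hbar}(H_k+\om\star_x|\psi_k|^2)\psi_k + \tfrac{\kappa}{2N}\sum_l(\psi_l-\la\psi_k|\psi_l\ra\psi_k)$ from \eqref{SL}.

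\emph{Hamiltonian part.} Since $V_k$ is real-valued (as needed for $H_k$ to be a Hamiltonian) and, I assume, $\om$ is real, the operator $H_k+\om\star_x|\psi_k|^2$ is symmetric on $\bbt^d$: integrate by parts in the Laplacian, with no boundary terms on the torus. Hence $\la (H_k+\om\star_x|\psi_k|^2)\psi_k|\psi_k\ra$ is real, and multiplying by $\tfrac1{{\mathrm i}\hbar}$ makes it purely imaginary. So this part contributes nothing to $\tfrac{d}{dt}n_k$.

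\emph{Coupling part.} Using the paper's convention that $\la\cdot|\cdot\ra$ is linear in the first slot,
\[
\RE\la \psi_l-\la\psi_k|\psi_l\ra\psi_k \,|\, \psi_k\ra = \RE\big(\la\psi_l|\psi_k\ra - \la\psi_k|\psi_l\ra\, n_k\big)=\RE\la\psi_l|\psi_k\ra\,(1-n_k),
\]
because $\RE\la\psi_k|\psi_l\ra=\RE\la\psi_l|\psi_k\ra$. Therefore
\[
\tfrac{d}{dt}n_k = \frac{\kappa}{N}\sum_{l=1}^N \RE\la\psi_l|\psi_k\ra\,(1-n_k).
\]

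\emph{Conclusion.} Put $y_k:=1-n_k$ and $a_k(t):=\tfrac{\kappa}{N}\sum_l\RE\la\psi_l|\psi_k\ra$. This coefficient is continuous and bounded on compact subintervals of $[0,T)$, since the solution is smooth. The equation then reads $\dot y_k=-a_k y_k$ with $y_k(0)=0$. Gr\"onwall's inequality, or the explicit formula $y_k(t)=y_k(0)e^{-\int_0^t a_k}$, gives $y_k\equiv0$ on $[0,T)$. Hence $\|\psi_k(t)\|_\fH=1$ for all $k\in[N]$.

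The main obstacle is not the algebra but justifying that the kinetic term is skew, i.e. that $\RE\la \tfrac1{{\mathrm i}}\Dlt\psi_k|\psi_k\ra=0$, and that the time derivative may be taken under the integral. Both follow from the assumed smoothness, or from $H^2$ regularity, of $\psi_k$ on the compact torus. For less regular solutions I would instead pass through the interaction picture $\phi_k=U(\hbar t)^*\psi_k$, where $U$ is unitary and $\Dlt$ no longer appears, and repeat the same computation there.
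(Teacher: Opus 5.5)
Your proposal is correct and follows essentially the same route as the paper. The paper also removes the Hamiltonian contribution by self-adjointness, derives the linear equation $\frac{d}{dt}(1-\|\psi_k\|_\fH^2)=-\kappa\,\RE\la\psi_k|\psi_c\ra\,(1-\|\psi_k\|_\fH^2)$ with $\psi_c=\frac1N\sum_l\psi_l$, and concludes from the explicit exponential solution with zero initial value. Your computation via $2\RE\la\d_t\psi_k|\psi_k\ra$ is the integrated form of the paper's pointwise computation of $\d_t|\psi_k|^2$. It is slightly cleaner, because the Laplacian term cancels only after integration over $\bbt^d$, not pointwise.
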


\begin{proof} We use the Hermitian property of $H_k$ (i.e. $H^*_k=H_k$) to write
\begin{align}
\begin{aligned} \label{B-1}
\d_t\psi_k &= -\frac{{\mathrm i}}{\hbar} (H_k+\om\star_x|\psi_k|^2)\psi_k+\frac{\kappa}{2N}\sum_{l=1}^N(\psi_l-\la\psi_k| \psi_l\ra\,\psi_k), 
\\
\d_t {\overline \psi}_k &= +\frac{{\mathrm i}}{\hbar} (H_k+\om\star_x|\psi_k|^2){\overline \psi}_k+\frac{\kappa}{2N}\sum_{l=1}^N({\overline \psi}_l- \overline{\la\psi_k | \psi_l \ra}\,{\overline  \psi}_k).
\end{aligned}
\end{align}
Multiplying both sides of the equality $\eqref{B-1}_1$ by ${\overline \psi}_k$ and both sides of the equality $\eqref{B-1}_2$ by $\psi_k$, and adding the resulting expressions yield
\[
\d_t |\psi_k|^2 = \frac{\kappa}{2N} \sum_{l=1}^{N} \Big[  \psi_l {\overline \psi}_k + \psi_k {\overline \psi}_l -  \Big(   \la \psi_k | \psi_l \ra + \overline{\la \psi_k| \psi_l \ra}  \Big)  |\psi_k|^2                 \Big].   
\]
We use $\displaystyle \psi_c := \frac{1}{N} \sum_{l=1}^{N}  \psi_l$ and integrate both sides of the above relation over $\bbt^d$ to obtain
\begin{equation*}
 \frac{d}{dt} \Big( 1 - ||\psi_k||_{\fH}^2 \Big) =  -\kappa\mbox{Re} \la \psi_k | \psi_c \ra  \Big( 1 - ||\psi_k||_{\fH}^2 \Big).
\end{equation*}
This implies
\begin{equation} \label{B-2}
1 - ||\psi_k(t)||_{\fH}^2 = (1 - ||\psi^0_k||_{\fH}^2) \exp \Big(-\int_0^t \kappa \mbox{Re} \la \psi_k(s) | \psi_c(s) \ra ds \Big). 
\end{equation}
Since $||\psi^0_k|| = 1$, the relation \eqref{B-2} yields
\[ 
1 - ||\psi_k(t)||_{\fH}^2 = 0, \quad \text{ for}~t \in [0, T)\text{ and}~k  \in [N]. 
\]
\end{proof}

\subsection{Emergent dynamics estimates} \label{sec:2.2}
In this subsection, we provide emergent estimates for \eqref{SL}. For this, we first recall the concepts of complete and practical synchronizations in the following definition.
\begin{Def}\label{D2.1}
\emph{\cite{C-C-H}}
Let $\Psi := \{ \psi_k \}$ be a global smooth solution to \eqref{SL}. 
\begin{enumerate}
\item
The ensemble $\Psi$ exhibits {\em complete synchronization}, if the following estimate holds:
\[ 
\lim_{t \to \infty} \max_{1\le k,l\le N} \| \psi_l(t) - \psi_k(t) \|_{\fH} = 0. 
\]
\item
The ensemble $\Psi$ exhibits {\em practical synchronization},  if the following estimate holds:
\[ 
\lim_{\kappa \to \infty} \limsup_{t \to \infty} \max_{1\le k,l\le N} \| \psi_l(t) - \psi_k(t) \|_{\fH} = 0. 
\]
\end{enumerate}
\end{Def}
Next, we estimate the temporal evolution of $L^2$-distance $\|\psi_k(t) - \psi_j(t) \|_{\fH}$. It follows from \eqref{SL} - \eqref{A-0-0} that
\begin{align*}
\begin{aligned}
\d_t\psi_ik&= \frac{{\mathrm i} \hbar }{2} \Delta\psi_k-\frac{{\mathrm i}}{\hbar} V_k \psi_k  -\frac{{\mathrm i}}{\hbar} ( \om\star_x|\psi_k|^2 )  \psi_k +\frac{\kappa}{2N}\sum_{l=1}^N(\psi_l-\la\psi_k |\psi_l\ra\psi_k), 
\\
\d_t\psi_j &= \frac{{\mathrm i} \hbar }{2} \Delta\psi_j -\frac{{\mathrm i}}{\hbar} V_j \psi_j -\frac{{\mathrm i}}{\hbar} ( \om\star_x|\psi_j|^2 )  \psi_j +\frac{\kappa}{2N}\sum_{l=1}^N(\psi_l-\la\psi_j| \psi_l\ra\psi_j).
\end{aligned}
\end{align*}
Substracting both sides of these equalities implies
\begin{align}
\begin{aligned} \label{B-5}
\partial_t(\psi_k-\psi_j) &= \frac{{\mathrm i} \hbar}{2}\Delta(\psi_k-\psi_j) -\frac{{\mathrm i}}{\hbar} V_k\psi_k
+ \frac{{\mathrm i}}{\hbar} V_j \psi_j -\frac{{\mathrm i}}{\hbar} ( \omega \star_x |\psi_k|^2 )  \psi_k   \\
&\hspace{0.2cm} + \frac{{\mathrm i}}{\hbar} ( \omega \star_x |\psi_j|^2 )  \psi_j  + \frac{\kappa}{2N}\sum_{l=1}^N\Big(\langle\psi_j|\psi_l\rangle\psi_j -\langle\psi_k|\psi_l \rangle\psi_k \Big).
\end{aligned}
\end{align}
Next, we multiply both sides of \eqref{B-5} by $\overline{\psi_k-\psi_j}$, and integrate both sides of the resulting equality over $\bbt^d$ to obtain
\begin{align}
\begin{aligned} \label{B-6}
&\langle\partial_t(\psi_k-\psi_j)| \psi_k-\psi_j \rangle 
\\
&\hspace{0.5cm} =\frac{{\mathrm i} \hbar}{2} \langle \Delta(\psi_k-\psi_j)| \psi_k-\psi_j \rangle - \frac{{\mathrm i}}{\hbar} \langle V_k\psi_k| \psi_k-\psi_j \rangle
  + \frac{{\mathrm i}}{\hbar} \langle V_j\psi_j| \psi_k-\psi_j \rangle 
\\
&\hspace{0.7cm} -\frac{{\mathrm i}}{\hbar}  \langle ( \omega \star_x |\psi_k|^2 )  \psi_k | \psi_k - \psi_j \rangle + \frac{{\mathrm i}}{\hbar}  \langle ( \omega \star_x |\psi_j|^2 )  \psi_j| \psi_k - \psi_j \rangle  
\\
& \hspace{0.7cm} + \frac{\kappa}{2N}\sum_{l=1}^N\bigg(\langle \psi_j | \psi_l \rangle\langle \psi_j| \psi_k-\psi_j \rangle -\langle\psi_k| \psi_l \rangle \langle \psi_k| \psi_k-\psi_j \rangle\bigg).
\end{aligned}
\end{align}
Now, we take the complex conjugate of each side of \eqref{B-6} to obtain
\begin{align}
\begin{aligned} \label{B-7}
&\langle \psi_k-\psi_j| \partial_t(\psi_k-\psi_j) \rangle
\\
& \hspace{0.8cm} = -\frac{{\mathrm i} \hbar}{2} \langle \psi_k - \psi_j| \Delta(\psi_k-\psi_j) \rangle + \frac{{\mathrm i}}{\hbar} \langle \psi_k - \psi_j | V_k\psi_k \rangle - \frac{{\mathrm i}}{\hbar} \langle \psi_k - \psi_j | V_j\psi_j \rangle 
\\
& \hspace{1cm} + \frac{{\mathrm i}}{\hbar}  \langle \psi_k - \psi_j | ( \omega \star_x |\psi_k|^2 )  \psi_k \rangle - \frac{{\mathrm i}}{\hbar}  \langle \psi_k - \psi_j | ( \omega \star_x |\psi_j|^2 )  \psi_j  \rangle  
\\
& \hspace{1cm} + \frac{\kappa}{2N}\sum_{l=1}^N\bigg(\langle \psi_l, \psi_j \rangle\langle\psi_k - \psi_j | \psi_j \rangle -\langle\psi_l| \psi_k \rangle \langle  \psi_k - \psi_j| \psi_k \rangle\bigg).
\end{aligned}
\end{align}
Finally, we add both sides of \eqref{B-6} and \eqref{B-7} and find that
\begin{align}
\begin{aligned} \label{B-8}
\begin{split}
&\frac{d}{dt}||\psi_k-\psi_j||_{\fH}^2 =  \frac{{\mathrm i} \hbar}{2} \langle \Delta(\psi_k-\psi_j) | \psi_k-\psi_j \rangle  -\frac{{\mathrm i} \hbar}{2} \langle \psi_k - \psi_j | \Delta(\psi_k-\psi_j) \rangle  
\\
&\hspace{0.5cm} - \frac{{\mathrm i}}{\hbar} \langle V_k\psi_k | \psi_k-\psi_j \rangle + \frac{{\mathrm i}}{\hbar} \langle \psi_k  - \psi_j| V_k\psi_k \rangle
\\
&\hspace{0.5cm} + \frac{{\mathrm i}}{\hbar} \langle V_j\psi_j | \psi_k-\psi_j \rangle - \frac{{\mathrm i}}{\hbar} \langle \psi_k - \psi_j | V_j\psi_j \rangle 
\\
&\hspace{0.5cm} -\frac{{\mathrm i} }{\hbar}  \langle ( \omega \star_x |\psi_k|^2 )  \psi_k | \psi_k - \psi_j \rangle + \frac{{\mathrm i} }{\hbar}  \langle ( \omega \star_x |\psi_j|^2 )  \psi_j | \psi_k - \psi_j \rangle 
\\
&\hspace{0.5cm} + \frac{{\mathrm i}}{\hbar}  \langle \psi_k - \psi_j | ( \omega \star_x |\psi_k|^2 )  \psi_k \rangle - \frac{{\mathrm i}}{\hbar}  \langle \psi_k - \psi_j | ( \omega \star_x |\psi_j|^2 )  \psi_j  \rangle  
\\
&\hspace{0.5cm}+ \frac{\kappa}{2N}\sum_{l=1}^N\bigg(\langle \psi_j| \psi_l \rangle\langle \psi_j| \psi_k-\psi_j \rangle -\langle\psi_k | \psi_l \rangle \langle \psi_k | \psi_k-\psi_j \rangle\bigg) 
\\
&\hspace{0.5cm}+ \frac{\kappa}{2N}\sum_{l=1}^N\bigg(\langle \psi_l | \psi_j \rangle\langle\psi_k - \psi_j | \psi_j \rangle -\langle\psi_l | \psi_k \rangle \langle  \psi_k - \psi_j | \psi_k \rangle\bigg) \\
&\hspace{0.5cm} =: \sum_{k=1}^{7} {\mathcal I}_{1k}. 
\end{split}
\end{aligned}
\end{align}
In the following lemma, we provide the temporal evolution of $\frac{d}{dt}||\psi_k-\psi_j||_{\fH}^2$ by estimating ${\mathcal I}_{1k}$.
\begin{Lem} \label{L2.2}
\emph{\cite{C-C-H}}
For $T \in (0, \infty]$, let $\{ \psi_k \}$ be a smooth solution to \eqref{SL} in the time-interval $[0, T)$. Then, we have
\begin{align}
\begin{aligned}  \label{B-8-1}
& \frac{d}{dt}||\psi_k-\psi_j||_{\fH}^2 \\
& \hspace{0.5cm} \leq-\frac{2}{\hbar}\int_{\bbt^d}(V_k-V_j)\mbox{Im}(\psi_k{\overline\psi}_j)dx-\frac{2}{\hbar}\int_{\bbt^d}\omega\star_x (|\psi_k|^2-|\psi_j|^2)\mbox{Im}(\psi_k{\overline \psi}_j)dx 
\\
&\hspace{0.7cm} +\frac{\kappa}{N}\sum_{k=1}^N \Big[\Big(||\psi_k-\psi_k||_{\fH} +||\psi_j-\psi_k||_{\fH}  - 1\Big)\cdot||\psi_k-\psi_j||_{\fH}^2-|1-\langle \psi_k|\psi_j \rangle|^2 \Big]. 
\end{aligned}
\end{align}
\end{Lem}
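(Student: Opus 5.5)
We estimate the seven groups $\mathcal I_{11},\dots,\mathcal I_{17}$ of \eqref{B-8} one at a time, using only $\|\psi_k\|_\fH=\|\psi_j\|_\fH=1$ from Lemma \ref{L2.1}.

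\emph{Kinetic term.} $\mathcal I_{11}$ vanishes. Indeed, $\Delta$ is self-adjoint on $\bbt^d$, so $\la\Delta u|u\ra$ is real, and $\mathcal I_{11}=\frac{{\mathrm i}\hbar}{2}\big(\la\Delta u|u\ra-\overline{\la\Delta u|u\ra}\big)=0$ with $u=\psi_k-\psi_j$.

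\emph{Potential and Hartree terms.} For a real multiplier $W$ (here $W=V_k$, $V_j$, or $\om\star_x|\psi_\cdot|^2$; we take $V_k$ real and $\om$ real so that these are real), we have $-\frac{{\mathrm i}}{\hbar}\big(\la W\psi_k|u\ra-\overline{\la W\psi_k|u\ra}\big)=\frac{2}{\hbar}\mbox{Im}\la W\psi_k|u\ra$. Since $\la W\psi_k|\psi_k\ra$ is real, this equals $-\frac2\hbar\mbox{Im}\int W\psi_k\overline{\psi_j}\,dx$. Likewise the $V_j\psi_j$ term gives $-\frac2\hbar\int V_j\,\mbox{Im}(\overline{\psi_k}\psi_j)=+\frac2\hbar\int V_j\,\mbox{Im}(\psi_k\overline{\psi_j})$. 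Summing the two yields exactly $-\frac2\hbar\int(V_k-V_j)\mbox{Im}(\psi_k\overline\psi_j)dx$. The same computation with $W=\om\star_x|\psi_k|^2$ and $W=\om\star_x|\psi_j|^2$ produces the Hartree term in \eqref{B-8-1}. These terms are identities, not estimates.

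\emph{Coupling terms.} The only inequality enters through $\mathcal I_{16}+\mathcal I_{17}=\frac{\kappa}{N}\sum_l\mbox{Re}\big[\la\psi_j|\psi_l\ra\la\psi_j|u\ra-\la\psi_k|\psi_l\ra\la\psi_k|u\ra\big]$. We use $\la\psi_j|u\ra=\la\psi_j|\psi_k\ra-1$ and $\la\psi_k|u\ra=1-\la\psi_k|\psi_j\ra$. Writing $h:=\la\psi_k|\psi_j\ra$ and $\|u\|^2=2-2\mbox{Re}\,h$, the bracket becomes
\[
\la\psi_j|\psi_l\ra(\bar h-1)-\la\psi_k|\psi_l\ra(1-h).
\]
Next we split $\la\psi_j|\psi_l\ra=\la\psi_j|\psi_k\ra+\la\psi_j|\psi_l-\psi_k\ra$ and $\la\psi_k|\psi_l\ra=1+\la\psi_k|\psi_l-\psi_k\ra$. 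The main part, $\bar h(\bar h-1)-(1-h)$, has real part $-(1-\mbox{Re}\,h)-|1-h|^2+\dots$ after expansion. A careful algebraic expansion (this bookkeeping is the main obstacle) gives $\mbox{Re}[\cdots]=-\tfrac12\|u\|^2-|1-h|^2+(\text{something bounded})$. The remainder terms are bounded via Cauchy--Schwarz by $\big(\|\psi_l-\psi_k\|+\|\psi_l-\psi_j\|\big)\,|1-h|$. To reach the stated form we then use $|1-h|\le\|u\|$ (or $|1-h|\le\|u\|^2$-type bounds combined with $\mbox{Re}(1-h)=\frac12\|u\|^2$), which converts the remainder into the factor $\big(\|\psi_k-\psi_l\|+\|\psi_j-\psi_l\|\big)\|u\|^2$.

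Summing over $l$ with the factor $\frac{\kappa}{N}$ then produces the bracket in \eqref{B-8-1}, read with the summation index $l$. The delicate point is arranging the remainder so that it is controlled by $\|u\|^2$ rather than by $\|u\|$. If a direct bound fails, I would retain the exact term $-|1-h|^2$ and absorb any cross term through the identity $|1-h|^2=(\mbox{Re}(1-h))^2+(\mbox{Im}\,h)^2$.
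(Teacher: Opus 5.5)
Your handling of ${\mathcal I}_{11},\dots,{\mathcal I}_{15}$ is correct and matches the paper; these terms are identities. The gap is in ${\mathcal I}_{16}+{\mathcal I}_{17}$. This is the only genuine inequality in the lemma, the paper simply imports it from \cite{C-C-H}, and your sketch does not establish it.

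The decisive step fails. You use $|1-h|\le\|u\|_\fH$ to turn the remainder $(\|\psi_l-\psi_k\|_\fH+\|\psi_l-\psi_j\|_\fH)|1-h|$ into $(\cdots)\|u\|_\fH^2$, but this produces only one factor of $\|u\|_\fH$. No bound $|1-h|\lesssim\|u\|_\fH^2$ holds, because only $\mbox{Re}(1-h)=\frac12\|u\|_\fH^2$ is quadratic; for $\psi_j=e^{{\mathrm i}\theta}\psi_k$ one has $|1-h|=\|u\|_\fH$. Your fallback of absorbing a term linear in $|1-h|$ into $-|1-h|^2$ does not work either, since that term must survive intact in \eqref{B-8-1}.

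The bookkeeping of the main part is also off. Exactly, $\mbox{Re}[\bar h(\bar h-1)-(1-h)]=\mbox{Re}(\bar h^2)-1=-\|u\|_\fH^2-|1-h|^2+\frac12\|u\|_\fH^4$. So the leading coefficient is $-1$, not $-\frac12$; with $-\frac12$ you would end with $(\cdots-\frac12)\|u\|_\fH^2$, which is weaker than claimed.

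Finally, $\psi_k$ is an awkward base point. Even after an exact expansion around $\psi_k$, Cauchy--Schwarz leaves an excess $\frac12\|u\|_\fH^4+2\|\psi_l-\psi_k\|_\fH\|u\|_\fH^2$. At $l=j$ this exceeds the $\|u\|_\fH^3$ that the bracket allows.

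The missing idea is to expand around $\psi_l$. Write $\la\psi_j|\psi_l\ra=1+\la\psi_j-\psi_l|\psi_l\ra$ and $\la\psi_k|\psi_l\ra=1+\la\psi_k-\psi_l|\psi_l\ra$, and use $\la\psi_k|u\ra=1-h$ and $\la\psi_j|u\ra=(1-h)-\|u\|_\fH^2$. With your $u$ and $h$, this gives the exact identity
\[
\begin{aligned}
&\mbox{Re}\big[\la\psi_j|\psi_l\ra\la\psi_j|u\ra-\la\psi_k|\psi_l\ra\la\psi_k|u\ra\big]\\
&\qquad=-\|u\|_\fH^2-|1-h|^2-\|u\|_\fH^2\,\mbox{Re}\la\psi_j-\psi_l|\psi_l\ra+\mbox{Re}\big[\la u|\psi_k-\psi_l\ra(1-h)\big].
\end{aligned}
\]
The unit-norm relation $\mbox{Re}\la\psi_j-\psi_l|\psi_l\ra=-\frac12\|\psi_j-\psi_l\|_\fH^2$ turns the apparently first-order third term into $\frac12\|\psi_j-\psi_l\|_\fH^2\|u\|_\fH^2$. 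Since $\|\psi_j-\psi_l\|_\fH\le 2$, this is at most $\|\psi_j-\psi_l\|_\fH\|u\|_\fH^2$. The last term carries both $\|u\|_\fH$ and $|1-h|=|\la\psi_k|u\ra|\le\|u\|_\fH$, so it is at most $\|\psi_k-\psi_l\|_\fH\|u\|_\fH^2$. Summing over $l$ with weight $\frac{\kappa}{N}$ gives exactly the bracket in \eqref{B-8-1}, with the summation index read as $l$.
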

\begin{proof} In the sequel, we study each term ${\mathcal I}_{1k}$ for $k \in [7]$ separately.

\vspace{0.2cm}
\noindent
$\bullet$ (Estimate of ${\mathcal I}_{11}$):  Since $\Delta$ is self-adjoint on $L^2(\bbt^d)$, we have
\[
{\mathcal I}_{11} = \frac{{\mathrm i} \hbar}{2} \langle \Delta(\psi_k-\psi_j) | \psi_k-\psi_j \rangle  -\frac{{\mathrm i} \hbar}{2} \langle \psi_k - \psi_j | \Delta(\psi_k-\psi_j) \rangle = 0.
\]

\vspace{0.2cm}

\noindent $\bullet$ (Estimate of ${\mathcal I}_{12}$ and ${\mathcal I}_{13}$):  Since $V_k$ and $V_j$ are real-valued functions, we find that
\begin{eqnarray*}
{\mathcal I}_{12}\!+\!{\mathcal I}_{13} &=& -\! \frac{{\mathrm i}}{\hbar} \langle V_k\psi_k | \psi_k-\psi_j \rangle\!+\! \frac{{\mathrm i}}{\hbar} \langle V_j\psi_j | \psi_k-\psi_j \rangle 
								+ \frac{{\mathrm i}}{\hbar} \langle \psi_k - \psi_j | V_k\psi_k \rangle - \frac{{\mathrm i}}{\hbar} \langle \psi_k - \psi_j | V_j\psi_j \rangle 
\cr
&=&\frac{{\mathrm i}}{\hbar} \int_{\bT^d} (V_k - V_j) (\psi_k {\overline \psi}_j - {\overline \psi}_k \psi_j ) dx = -\frac{2}{\hbar} \int_{\bT^d} (V_k - V_j) \mbox{Im}(\psi_k {\overline \psi}_j) dx.
\end{eqnarray*}      

\vspace{0.2cm}

\noindent $\bullet$ (Estimate of ${\mathcal I}_{14} + {\mathcal I}_{15}$): Similar calculations show that
\[
\begin{aligned}
{\mathcal I}_{14} + {\mathcal I}_{15} =& \frac{{\mathrm i}}{\hbar} \int_{\bT^d} \left(\omega \star_x (|\psi_k|^2 - |\psi_j|^2)\right) (\psi_k {\overline \psi}_j - {\overline \psi}_k \psi_j) dx 
\\
=& -\frac{2}{\hbar} \int_{\bT^d} \left(\omega \star_x (|\psi_k|^2 - |\psi_j|^2)\right) \mbox{Im}(\psi_k {\overline \psi}_j) dx.
\end{aligned}
\]

\vspace{0.2cm}

\noindent $\bullet$ (Estimate of ${\mathcal I}_{16} + {\mathcal I}_{17}$): Proceeding as in \cite{C-C-H}, we find that
\begin{eqnarray*}
{\mathcal I}_{16} + {\mathcal I}_{17} &=& \frac{\kappa}{2N}\sum_{m=1}^N\Big( \langle \psi_j | \psi_m \rangle\langle \psi_j | \psi_k-\psi_j \rangle -\langle\psi_k | \psi_m\rangle \langle \psi_k | \psi_k-\psi_j \rangle  
\cr
&+& \langle \psi_m | \psi_j \rangle\langle\psi_k - \psi_j | \psi_j \rangle -\langle\psi_m | \psi_k \rangle \langle  \psi_k - \psi_j | \psi_k \rangle \Big) 
\cr
&=&  \frac{\kappa}{N}\sum_{m=1}^N {\mbox Re}\bigg[ \langle \psi_j | \psi_m \rangle\langle \psi_j | \psi_k-\psi_j \rangle -\langle\psi_k | \psi_m\rangle \langle \psi_k| \psi_k-\psi_j \rangle\bigg] 
\cr
&\leq&  \frac{\kappa}{N}\sum_{m=1}^N \Big[  \Big(||\psi_k-\psi_m||_{\fH} +||\psi_j-\psi_m||_{\fH}  - 1\Big)\cdot||\psi_k-\psi_j||_{\fH}^2-|1-\langle \psi_k | \psi_j \rangle|^2 \Big].
\end{eqnarray*}
Finally, in \eqref{B-8}, we combine all the inequalities for ${\mathcal I}_{1i}$ to obtain the desired estimate.
\end{proof}

\subsection{Reformulation of the SL model} \label{sec:2.3}

Let $U(t)$ be the free Schr\"odinger flow with $\hbar = 1$ denoted by
\[ 
U(t):=e^{{\mathrm i}t\Dlt/2}\,,\quad t\in \bbr,  
\]
and we set 
\[ 
\phi_k(t):=U(\hbar t)^*\psi_k(t)\,,\quad k \in [N]. 
\]
Then, $\phi_k$ satisfies
\[ 
{\mathrm i} \hbar\d_t\phi_k=U(\hbar t)^*(({\mathrm i}\hbar\d_t+\tfrac12\hbar^2\Dlt_x)\psi_k) \quad \mbox{for each $k \in [N]$}. 
\]
Hence the Cauchy problem \eqref{SL} becomes
\begin{equation}\label{B-14}
\begin{cases}
\displaystyle \d_t\phi_k \!=\!\tfrac1{i\hbar}U(\hbar t)^*(V_k\!+\!\om\star_x|U(\hbar t)\phi_k|^2)U(\hbar t)\phi_k \!+\!\frac{\kappa}{2N}\sum_{l=1}^N \Big(\phi_l\!-\!\la\phi_k|\phi_l\ra\phi_k \Big ),~~x\in\bbt^d,
\\
\displaystyle \phi_k\rstr_{t=0}=\psi_k^{0}\,,\quad k \in [N]\,.
\end{cases}
\end{equation}
Since $U(t)$ is a unitary group on $L^2(\bbt^d)$, one has
$$
\la\phi_k(t)|\phi_l(t)\ra=\la\psi_k(t)|\psi_l(t)\ra\,,\quad k,l \in [N]\,.
$$
We set 
\[
X_k(t,\phi_1,\ldots,\phi_N):=\frac1{i\hbar}U(\hbar t)^*(V_k+\om\star_x|U(\hbar t)\phi_k|^2)U(\hbar t)\phi_k +\frac{\kappa}{2N}\sum_{l=1}^N \Big(\phi_l-\la\phi_k|\phi_l\ra\phi_k \Big),
\]
for $k \in [N]$ and $X(t,\phi_1,\ldots,\phi_N):=(X_k(t,\phi_1,\ldots,\phi_N))_{1\le k\le N}$. Clearly $X_k$ is continuous on $\bbr\times\fH^N$ for each $k=1,\ldots,N$, and Lipschitz continuous in 
$\phi_1,\ldots,\phi_N\in B(0,R)$ for each $R>0$ uniformly in $t\in\bbr$. By the Cauchy-Lipschitz theorem recalled in Appendix \ref{S-CL}, the differential system \eqref{B-14} has a unique local smooth solution.
Next we observe that
\begin{align*}
\begin{aligned}
& X_k(t,\phi_1,\ldots,\phi_N)|\phi_k\ra \\
& \hspace{0.5cm} =\La\frac1{i\hbar}U(\hbar t)^*(V_k+\om\star_x|U(\hbar t)\phi_k|^2)U(\hbar t)\phi_k\Big|\phi_k\Ra +\frac\kappa2(\la\phi_c|\phi_k\ra-\la\phi_k|\phi_c\ra\|\phi_k\|_\fH^2)
\\
& \hspace{0.5cm} =\La\frac1{i\hbar}(V_k+\om\star_x|U(\hbar t)\phi_k|^2)U(\hbar t)\phi_k\Big|U(\hbar t)\phi_k\Ra
\\
& \hspace{0.7cm} +\frac\kappa2(\la U(\hbar t)\phi_c|U(\hbar t)\phi_k\ra-\la U(\hbar t)\phi_k|U(\hbar t)\phi_c\ra\|\phi_k\|_\fH^2)
\\
& \hspace{0.5cm} =\La\frac1{i\hbar}(V_k+\om\star_x|\psi_k|^2)\psi_k\Big|\psi_k\Ra+\frac\kappa2(\la\psi_c|\psi_k\ra-\la\psi_k|\psi_c\ra\|\psi_k\|_\fH^2),
\end{aligned}
\end{align*}
with $\phi_c:=\frac1N\sum_{l=1}^N\phi_l$  so that the same computation as in the proof of Lemma \ref{L2.1} shows that $\|\phi_k(t)\|_\fH=1$ for each $k \in [N]$ and each $t$ in the maximal interval of definition of the solution of \eqref{B-14}.
Hence, one has 
\[
\|\partial_t\phi_k(t)\|_\fH\le\tfrac1{\hbar}(\|V_k\|_{L^\infty}+\|\omega\|_{L^\infty})+\kappa\,,
\]
so that
\[
\|\phi_k(t_2)-\phi_k(t_1)\|_\fH\le\int_{t_1}^{t_2}\|\partial_t\phi_k(t)\|_\fH dt\le\int_0^T\|\partial_t\phi_k(t)\|_\fH dt<\infty,
\]
whenever $0<t_1<t_2<T<\infty$. By the Cauchy convergence criterion, $\phi_k(t)$ has a limit in the complete space $\fH$ as $t\to T^-$ for each $T\in(0,+\infty)$. As recalled in Appendix \ref{S-CL},
this implies that the differential system \eqref{B-14} has a unique global solution on $\bS\fH^N$ for all $\psi_1^{0},\ldots,\phi_N^{0}\in\bS\fH$.

Since $U(t)$ is a unitary group on $\fH$, one easily checks that the conservation of $L^2$-norm in Lemma \ref{L2.1} applies to $\psi_k(t)=U(-\hbar t)\phi_k(t)$ for $k=1,\ldots,N$ for all $t\in\bbr$, for 
all initial data $\psi_k^0\in\bS\fH$, without having to postulate any smoothness condition on either $\psi_k^0$ or $\psi_k(t)$.

Similarly, the conclusion of Lemma \ref{L2.2} also holds for all $t\in[0,+\infty)$ in its time-integrated form, without having to postulate any smoothness condition on either $\psi_k^0$ or $\psi_k(t)$. 
To see this, we first assume that the potentials $V_k$, the Hartree kernel $\omega$ and the initial data $\psi_k^0$ are smooth on $\bbt^d$. Then, the global solution $\{\phi_k(t)\}$ for $k \in [N]$
remains smooth for all $t\ge 0$, and the conclusion of Lemma \ref{L2.2} holds for all $t\in[0,+\infty)$. After we integrate \eqref{B-8-1} in $t$ to find 
\begin{align*} \label{Int<L2.2}
\begin{aligned} 
&\|\psi_k(t_2)-\psi_j(t_2)\|_{\fH}^2 \\
& \hspace{0.5cm} \leq \|\psi_k(t_1)-\psi_j(t_1)\|_{\fH}^2 \!-\!\frac{2}{\hbar}\int_{t_1}^{t_2}\int_{\bbt^d}\Big((V_k\!-\!V_j)\!+\!\omega\star_x (|\psi_k|^2\!-\!|\psi_j|^2) \Big )\mbox{Im}(\psi_k{\overline\psi}_j)dxdt
\\
& \hspace{0.7cm} +\frac{\kappa}{N}\sum_{m=1}^N\int_{t_1}^{t_2} \Big (\left(\|\psi_k\!-\!\psi_m\|_{\fH}\!+\!\|\psi_j\!-\!\psi_m\|_{\fH}\!-\!1\right)\|\psi_k\!-\!\psi_j\|_{\fH}^2\!-\!|1\!-\!\langle \psi_k|\psi_j \rangle|^2 \Big )dt,
\end{aligned}
\end{align*}
for all $0\le t_1<t_2<\infty$. The case of general potentials $V_k\in L^\infty(\bbt^d)$ and of $\omega\in L^\infty(\bbt^d)$ follows by the density argument in the $L^2$ topology induced on $\bS\fH$. 

In the sequel, we conclude Section \ref{sec:2.3} with a remark. Consider the Cauchy problem:
\begin{equation} \label{B-15}
\begin{cases}
\displaystyle \d_t\phi_k = \frac{\kappa}{2N}\sum_{l=1}^N \Big(\phi_l-\la\phi_k|\phi_l\ra\phi_k \Big ), \quad (t, x) \in \bbr_+ \times \bbt^d,
\\
\displaystyle \phi_k\rstr_{t=0}=\psi_k^{0}, \quad k \in [N],
\end{cases}
\end{equation}
and denote its solution operator for \eqref{B-15} by $L_j (t)\phi^{0} = \phi_j(t,\cdot)$. In the case of equal one-body potentials $V_k = V$, the SL model enjoys a solution splitting property which says that 
the solution to \eqref{SL} can be expressed as the composition of the free Schr\"odinger flow and the nonlinear Lohe flow.

\begin{Prop}\label{P2.1}
\emph{\cite{H-H-K1}}
Suppose that the potentials and the Hartree convolution kernel satisfy
\[ 
V_k = V, \quad  k \in [N] \quad \mbox{and} \quad \omega = 0,
\]
and let $\Psi := \{ \psi_k \}$ be a global smooth solution to \eqref{SL}. Then, the following solution splitting property holds:
\[ 
\psi_j(t,x) = \exp\left(\tfrac{it}2\Delta-\tfrac{i}{\hbar^2}V\right)\circ L_j(t) \psi^{0}_{j}(x), \quad (t, x) \in \bbr_+ \times \bbt^d. 
\]
\end{Prop}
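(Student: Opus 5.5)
The plan is to verify directly that the composition of the linear flow with the Lohe flow satisfies \eqref{SL}, and then conclude by uniqueness. Write $H:=-\tfrac12\hbar^2\Dlt_x+V$ for the common one-body Hamiltonian and $S(t):=e^{-{\mathrm i}tH/\hbar}$ for the associated unitary group on $\fH$. I will read the operator $\exp\left(\tfrac{it}2\Delta-\tfrac{i}{\hbar^2}V\right)$ in the statement as this propagator, up to the normalization of $\hbar$ used there, and the argument below does not depend on that normalization. Let $\phi_j(t)=L_j(t)\psi^0_j$ be the solution of the pure Lohe system \eqref{B-15}. Its global existence on $\bS\fH^N$ and its uniqueness follow from the Cauchy--Lipschitz argument of Section \ref{sec:2.3} with $V_k=0$ and $\omega=0$. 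Set $\tilde\psi_j(t):=S(t)\phi_j(t)$. The goal is to show that $\{\tilde\psi_j\}$ solves \eqref{SL}.

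The key algebraic observation is that the Lohe coupling is equivariant under any unitary operator $W$ on $\fH$. Since $\la W\phi_k|W\phi_l\ra=\la\phi_k|\phi_l\ra$, one has
\[
W\Big(\phi_l-\la\phi_k|\phi_l\ra\phi_k\Big)=W\phi_l-\la W\phi_k|W\phi_l\ra W\phi_k .
\]
Differentiating $\tilde\psi_j=S(t)\phi_j$ by the product rule gives
\[
{\mathrm i}\hbar\d_t\tilde\psi_j=H\tilde\psi_j+S(t)\,{\mathrm i}\hbar\d_t\phi_j
=H\tilde\psi_j+\frac{{\mathrm i}\hbar\kappa}{2N}\sum_{l=1}^N S(t)\Big(\phi_l-\la\phi_j|\phi_l\ra\phi_j\Big).
\]
Applying the equivariance with $W=S(t)$, the last sum equals $\sum_{l}(\tilde\psi_l-\la\tilde\psi_j|\tilde\psi_l\ra\tilde\psi_j)$. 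This is exactly \eqref{SL} with $V_j=V$ and $\omega=0$, and the initial data agree since $S(0)=I$ and $L_j(0)=I$. Here it is essential that all the $V_k$ coincide, since a single $S(t)$ must be applied to every component; it is also essential that $\omega=0$, since the Hartree term is not produced by $S(t)$.

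Uniqueness for \eqref{SL} then gives $\psi_j=\tilde\psi_j$. This uniqueness is obtained by passing to the transformed system \eqref{B-14}, whose vector field is locally Lipschitz on $\fH^N$.

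I expect the main obstacle to be rigor rather than algebra, namely justifying the product rule for $S(t)\phi_j(t)$ when $H$ is unbounded. The solution is assumed smooth, and the Lohe flow preserves regularity, since its vector field maps $H^2(\bbt^d)^N$ into itself with local Lipschitz bounds. So for $H^2$ data the differentiation is legitimate in $L^2$, because $t\mapsto S(t)u$ is $C^1$ for $u\in H^2$. Alternatively, I can avoid derivatives entirely by comparing mild (Duhamel) formulations: $S(t)\phi_j(t)$ satisfies the Duhamel formula of \eqref{SL}, by applying $S(t)$ to the integral form of \eqref{B-15} and using equivariance inside the integral. The identity then extends to general $L^2$ data by density together with the Lipschitz dependence on initial data from Appendix \ref{S-CL}.
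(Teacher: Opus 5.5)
Your proof is correct. You conjugate the solution of the Lohe system \eqref{B-15} by the common propagator $S(t)=e^{-\mathrm{i}tH/\hbar}$, use $\langle S(t)\phi_k|S(t)\phi_l\rangle=\langle\phi_k|\phi_l\rangle$ to pass $S(t)$ through the coupling term, and conclude by uniqueness via \eqref{B-14}; this is the standard argument for the splitting property, and the paper itself gives no proof beyond citing Proposition 2.3 of \cite{H-H-K1}.

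You were also right to reinterpret the operator in the statement. Read literally, $\exp(\tfrac{\mathrm{i}t}{2}\Delta-\tfrac{\mathrm{i}}{\hbar^2}V)$ is not the identity at $t=0$ and has the wrong $\hbar$-scaling. The intended (and true) identity is $\psi_j(t)=e^{-\mathrm{i}tH/\hbar}L_j(t)\psi^0_j$ with $H=-\tfrac12\hbar^2\Delta+V$.
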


\begin{proof} 
For a proof, we refer to Proposition 2.3 of \cite{H-H-K1}.
\end{proof}

\subsection{Practical synchronization} \label{sec:2.4}
Now, we are ready to discuss the  practical synchronization of \eqref{SL}. For this, we set
\[
 {\mathcal D}({\mathcal V}) := \max_{1\le k, l\le N} \| V_k - V_l \|_{L^{\infty}}, \quad {\mathcal D}(\Psi) := \max_{1\le k, l\le N} \|\psi_k - \psi_l \|_{\fH}.
\]
Consider the following cubic polynomial: 
\[
q(x) := 2 x^3- x^2+ \frac{1}{\kappa \hbar} \Big( 2 {\mathcal D}({\mathcal V}) +   4 ||\omega||_{L^{\infty}} \Big), \quad x \in [0, \infty). 
\]
We assume that the coupling strength $\kappa$ is sufficiently large such that 
\[
  \kappa > \frac{27}{\hbar} \Big(  2 {\mathcal D}({\mathcal V}) +  4 ||\omega||_{L^{\infty}}  \Big). 
\]
Then, $q$ has a negative global minimum $\displaystyle -\frac{1}{27} + \frac{1}{\kappa \hbar} \Big( 2{\mathcal D}({\mathcal V}) +   4 ||\omega||_{L^{\infty}} \Big)$ at $\displaystyle  x = \frac{1}{3}$. Moreover, $q(x)= 0$ has two positive real roots $\alpha_1 <  \alpha_2 $:
\[  
0 < \alpha_1 < \frac{1}{3} < \alpha_2 < \frac{1}{2}. 
\]
In addition, since the roots $\alpha_1$ and $\alpha_2$ depend continuously on $\kappa$ and ${\mathcal D}({\mathcal V})$:
\begin{equation} \label{B-9}
 \lim_{\kappa \to \infty} \alpha_1 = 0,  \qquad  \lim_{\kappa \to \infty} \alpha_2 = \frac{1}{2}.
\end{equation}

\begin{Thm}\lb{T-PractiSynchroSL}
\emph{(Practical synchronization)} Suppose that the one-body potentials $V_k$, the convolution kernel $\omega$, the coupling strength $\kappa$ and the initial data satisfy
\[ 
\max_{1 \leq k \leq N} \| V_k  \|_{L^{\infty}} + \| \omega \|_{L^\infty} < \infty, \qquad \kappa > \frac{27}{\hbar} \Big(  2 {\mathcal D}({\mathcal V}) +  4 ||\omega||_{L^{\infty}}  \Big), \qquad {\mathcal D}(\Psi^{{0}}) < \alpha_2,
\]
and let $ \Psi = \{ \psi_k \}$ be a global smooth solution to \eqref{SL}. Then, we have 
\[ 
\lim_{\kappa \to \infty} \varlimsup_{t \to \infty} {\mathcal D}(\Psi(t)) = 0. \]
\end{Thm}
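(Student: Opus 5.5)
We derive a differential inequality for the diameter $\mathcal D(\Psi(t))$ from Lemma \ref{L2.2} (in its time-integrated form from Section \ref{sec:2.3}, so no smoothness is needed). Then we use the cubic $q$ to show that the sublevel set $\{\mathcal D<\alpha_2\}$ is forward invariant and that trajectories are eventually trapped below $\alpha_1$. Finally, \eqref{B-9} gives the limit as $\kappa\to\infty$.

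\emph{Step 1: pointwise bound on the pairwise distance.} Fix $k,j$ and drop the nonpositive term $-|1-\la\psi_k|\psi_j\ra|^2$ in \eqref{B-8-1}. For the potential term we use $|\mbox{Im}(\psi_k\overline\psi_j)|\le|\psi_k||\psi_j|$ and Cauchy--Schwarz with $\|\psi_k\|_\fH=\|\psi_j\|_\fH=1$ (Lemma \ref{L2.1}), which gives a bound $\frac{2}{\hbar}\mathcal D(\mathcal V)$. For the Hartree term we use $\|\omega\star_x(|\psi_k|^2-|\psi_j|^2)\|_{L^\infty}\le\|\omega\|_{L^\infty}\,\||\psi_k|^2-|\psi_j|^2\|_{L^1}\le 2\|\omega\|_{L^\infty}$, giving $\frac{4}{\hbar}\|\omega\|_{L^\infty}$. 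These bounds are crude and not proportional to the distance, but that suffices. The coupling term is bounded by $\kappa(2\mathcal D(\Psi)-1)\|\psi_k-\psi_j\|^2_\fH$.

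\emph{Step 2: inequality for the diameter.} Choose a pair $(k,j)$ realizing the maximum. Dini-derivative arguments for a maximum of finitely many absolutely continuous functions then give, in the sense of upper Dini derivatives,
\[
\frac{d}{dt}\mathcal D(\Psi)^2\le \kappa\big(2\mathcal D(\Psi)-1\big)\mathcal D(\Psi)^2+\tfrac1\hbar\big(2\mathcal D(\mathcal V)+4\|\omega\|_{L^\infty}\big)=\kappa\, q(\mathcal D(\Psi)).
\]

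\emph{Step 3: invariance and trapping.} Since $q<0$ on $(\alpha_1,\alpha_2)$ and $\mathcal D(\Psi^0)<\alpha_2$, a continuity and contradiction argument shows $\mathcal D(\Psi(t))<\alpha_2$ for all $t$. Whenever $\mathcal D\in[\alpha_1+\varepsilon,\alpha_2)$, the quantity $\mathcal D^2$ decreases at rate at least $\kappa\min_{[\alpha_1+\varepsilon,\,\max(\mathcal D^0,\alpha_1+\varepsilon)]}|q|>0$. Hence $\mathcal D$ enters $[0,\alpha_1+\varepsilon]$ in finite time and stays there, because the derivative is negative at the boundary. This yields $\varlimsup_{t\to\infty}\mathcal D(\Psi(t))\le\alpha_1$. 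Letting $\kappa\to\infty$ and using $\alpha_1\to0$ from \eqref{B-9} gives the claim.

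The main obstacle is Step 2: the bound in Step 1 is a statement about squared distances of one pair, and it must be passed to the maximum with the correct factor. We need $\frac{d}{dt}\mathcal D^2$ rather than $\frac{d}{dt}\mathcal D$ to match $q(x)=2x^3-x^2+c$; dividing by $2\mathcal D$ would alter the constant structure. The constants in Step 1 must also be kept exactly as $2\mathcal D(\mathcal V)$ and $4\|\omega\|_{L^\infty}$ to match $q$.
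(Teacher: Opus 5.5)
Your proposal is correct and follows the paper's proof. Your Step 1 uses the same bounds $\frac{2}{\hbar}\mathcal D(\mathcal V)$ and $\frac{4}{\hbar}\|\omega\|_{L^\infty}$ in Lemma \ref{L2.2} to reach $\frac{d}{dt}\mathcal D(\Psi)^2\le\kappa\,q(\mathcal D(\Psi))$, then traps $\mathcal D$ using the roots $\alpha_1<\alpha_2$ and concludes via \eqref{B-9}, exactly as the paper does. Your $\alpha_1+\varepsilon$ trapping step is actually more careful than the paper's Case B. The paper claims finite-time entry into $[0,\alpha_1]$ itself, which the inequality does not guarantee because $\alpha_1$ is a simple root of $q$; both versions nevertheless give $\varlimsup_{t\to\infty}\mathcal D(\Psi(t))\le\alpha_1$.
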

\begin{proof} We split the proof into two steps. For notational simplicity, we suppress $t$-dependence in $\psi_i$:
\[ \psi_i(x) \equiv \psi_i(t,x), \quad i \in [N]. \]

\vspace{0.1cm}

\noindent $\bullet$ Step A (Derivation of a differential inequality for ${\mathcal D}(\Psi)$): First, we note that 
\begin{eqnarray*}
&& |\mbox{Im}(\psi_k {\overline \psi}_j) | \leq |\psi_k {\overline \psi}_j| = |\psi_k|  |{\overline \psi}_j| \leq \frac{1}{2} \Big( |\psi_k|^2 + |\psi_j|^2 \Big), 
\cr
&& |\omega * (|\psi_k|^2 - |\psi_j|^2)(x)| = \Big| \int_{\bbt^d} \omega(x-y)  (|\psi_k(y)|^2 - |\psi_j(y)|^2) dy  \Big| 
\cr
&& \hspace{2cm} \leq ||\omega||_{L^{\infty}} \int_{\bbt^d} \Big (|\psi_k(y)|^2 + |\psi_j(y)|^2 \Big ) dy = 2||\omega||_{L^{\infty}}. 
\end{eqnarray*} 
Thus, we have
\begin{align*}
\begin{aligned}
\frac{d}{dt}||\psi_k-\psi_j||_{\fH}^2 &\leq \frac{2}{\hbar} ||V_k - V_j||_{L^{\infty}} +  \frac{4 ||\omega||_{L^{\infty}}}{\hbar} -\ka|1-\langle\psi_k|\psi_j\rangle|^2
\\
&+ \frac{\kappa}{N}\sum_{m=1}^N \Big[  \Big(||\psi_k-\psi_m||_{\fH} +||\psi_j-\psi_m||_{\fH}  - 1\Big)\cdot||\psi_k-\psi_j||_{\fH}^2 \Big].
\end{aligned}
\end{align*}
This yields 
\begin{align}
\begin{aligned} \label{B-10}
\frac{d}{dt} {\mathcal D}(\Psi)^2 &\leq \frac{2{\mathcal D}({\mathcal V})}{\hbar} +   \frac{4 ||\omega||_{L^{\infty}}}{\hbar}  + \kappa (2 {\mathcal D}(\Psi) - 1) {\mathcal D}(\Psi)^2 
\\
&= \kappa \Big[ 2 {\mathcal D}(\Psi)^3 - {\mathcal D}(\Psi)^2 +  \frac{2{\mathcal D}({\mathcal V})}{\hbar \kappa} +   \frac{4 ||\omega||_{L^{\infty}}}{\hbar \kappa}  \Big].
\end{aligned}
\end{align}
Notice here again that we have chosen to use the ``formal'' differential form instead of the rigorous time-integrated form of \eqref{B-10}, for the sake of clarity.

\vspace{0.2cm}

\noindent $\bullet$ Step B (Upper bound estimate for ${\mathcal D}(\Psi)$):~For a relative size of ${\mathcal D}(\Psi^{0})$, we consider two cases. \newline

\noindent $\diamond$ Case A: Suppose that $\Psi^{0}$ satisfies
\[ 
{\mathcal D}(\Psi^{0}) \leq \alpha_1. 
\]
Let $t = t_*$ be the instant such that 
\[ 
{\mathcal D}(\Psi(t_*)) = \alpha_1. 
\]
Then, it follows from \eqref{B-10} that 
\[  
\frac{d}{dt} \Big|_{t = t_*} {\mathcal D}(\Psi)^2 \leq 0.
\]
Thus, ${\mathcal D}(\Psi)$ is confined in the interval $[0, \alpha_1]$ for all $t \geq 0$:
\begin{equation} \label{B-11}
0 \leq  {\mathcal D}(\Psi(t)) \leq \alpha_1.
\end{equation}

\noindent $\diamond$ Case B: Suppose that $\Psi^{0}$ satisfies
\[ 
\alpha_1 < {\mathcal D}(\Psi^{0}) < \alpha_2. 
\]
In this case, it is easy to see that at the instant $t$ such that  ${\mathcal D}(\Psi(t))$ is restricted within the interval $(\alpha_1, \alpha_2)$, we have
\[  
\frac{d}{dt}({\mathcal D}(\Psi)(t))^2 \leq \kappa \Big[ ({\mathcal D}(\Psi)(t))^2(2{\mathcal D}(\Psi)(t)-1)+  \frac{2{\mathcal D}({\mathcal V})}{\hbar \kappa} +   \frac{4 ||\omega||_{L^{\infty}}}{\hbar \kappa} \Big] < 0. 
\]
In other words, ${\mathcal D}(\Psi)$ is decreasing in the initial phase of its evolution. Then, by the same argument in Proposition 3.1  \cite{C-H-J-K}, there exists a positive time $t_e \in (0, \infty)$ such that
\begin{equation} \label{B-12}
{\mathcal D}(\Psi(t)) \leq \alpha_1, \quad t \geq t_e. 
\end{equation}
We combine \eqref{B-11} and \eqref{B-12} to find
\begin{equation} \label{B-13}
 0 \leq \limsup_{t \to \infty} {\mathcal D}(\Psi(t)) \leq \alpha_1. 
\end{equation} 
Finally, we combine \eqref{B-9} and \eqref{B-13} to obtain the desired practical synchronization:
\[ 
\lim_{\kappa \to \infty} \limsup_{t \to \infty} {\mathcal D}(\Psi(t)) = 0. 
\]
\end{proof}


\section{The Liouville equation and the kinetic SL equation} \label{sec:3}


In this section, we consider the Liouville equation for the $N$-oscillator distribution function (probability measure) on the $N$-oscillator phase space, and then derive the mean-field kinetic equation --- which can be obtained via 
the molecular chaos assumption. For the related mean-field limit of the Schr\"odinger equation and Lohe matrix model, we refer to the recent works \cite{G-H, G-P, G-M-P}.

\subsection{The Liouville equation}  \label{sec:3.1}


First, we define the $N$-oscillator phase space as 
\[
(\bS\fH)^N\times C(\bbt^d)^N
\]
(where we recall that $\bS\fH$ is the unit sphere of the Hilbert space $\fH=L^2(\bbt^d)$). A point in this $N$-oscillator phase-space is $(\psi_1,\ldots,\psi_N,V_1,\ldots,V_N)$, where $\psi_j$ is the $j$-th oscillator's wave 
function, while $V_j$ is the applied potential in the equation for $\d_t\psi_j$. (Although including $V_1,\ldots,V_N$ in the phase space might seem useless at this point, there is a very good reason for doing so, which 
we shall explain later). The $N$-oscillator distribution function $F_N$ is a time-dependent Borel probability measure on the $N$-oscillator phase space, denoted by
\[
F_N(t)\equiv F_N(t,d\psi_1\ldots d\psi_N,dV_1\ldots dV_N)\,, 
\]
where
\[
\bbr\ni t\mapsto F_N(t)\in\cP((\bS\fH)^N\times C(\bbt^d)^N)\,.
\]
For each separable complete metric (i.e. Polish) space  $Z$, we denote by $\cP(Z)$ the set of all Borel probability measures on $Z$. 

Recall that the evolution of the transformed wave function $\phi_k(t):=U(-\hbar t)\psi_k(t)$ is governed by the following Cauchy problem (see \eqref{B-14}):
\begin{equation} \label{C-1}
\begin{cases}
\displaystyle \d_t\phi_k\!=\!\frac1{i\hbar}U(\hbar t)^*(V_k\!+\!\om\star_x|U(\hbar t)\phi_k|^2)U(\hbar t)\phi_k\!+\!\frac{\kappa}{2N}\sum_{l=1}^N(\phi_l\!-\!\la\displaystyle \phi_k|\phi_l\ra\phi_k),~~x\in\bbt^d, 
\\
\phi_k\rstr_{t=t_0}=\psi_k^{0}\, \qquad k \in [N].
\end{cases}
\end{equation}
In the sequel, we view the SL model \eqref{C-1} in the $\phi_k$ coordinates as a system of ODEs written on the phase space $(\bS\fH)^N\times C(\bbt^d)^N$, as explained in 
Section \ref{sec:2.3}, and one defines
\begin{equation}\label{Def-SN(tt0)}
\cS_N(t,t_0)(\phi_1^{0},\ldots,\phi_N^{0},V_1,\ldots,V_N):=(\phi_1(t),\ldots,\phi_N(t),V_1,\ldots,V_N)\,,\quad t\in\bbr\,.
\end{equation}
The global existence of the flow $\cS_N(t,t_0)$ for all $t,t_0\in\bbr$ follows from the application of the Cauchy-Lipschitz Theorem to \eqref{C-1} as explained in Section \ref{sec:2.3}. Moreover, Lemma \ref{L2.1} implies that 
$\cS_N(t,t_0)\left((\bS\fH)^N\times C(\bbt^d)^N\right)\subset(\bS\fH)^N\times C(\bbt^d)^N$, as explained in Section \ref{sec:2.3}. 

Thus, the right-hand side of \eqref{C-1} can be regarded as a vector field on $(\bS\fH)^N\times C(\bbt^d)^N$, which is an (infinite dimensional) smooth manifold --- in fact a submanifold of $\fH^N\times C(\bbt^d)^N$.
More precisely, we decompose this right-hand side as follows:
\be\label{C-1-1}
\left\{
\begin{aligned} 
{}& X_N =X^f_N+X^i_N, 
\\
& X^f_N(t,\phi_1,\ldots,\phi_N,V_1,\ldots,V_N):=\Big(\tfrac1{{\mathrm i}\hbar}U(\hbar t)^*(V_k+\om\star_x|U(\hbar t)\phi_k|^2)U(\hbar t)\phi_k \Big)_{1\le k\le N}\oplus 0_N,
\\
& X^i_N(t,\phi_1,\ldots,\phi_N,V_1,\ldots,V_N) := \left(\frac{\kappa}{2N}\sum_{l=1}^N(\phi_l-\la\phi_k|\phi_l\ra\phi_k)\right)_{1\le k\le N}\oplus 0_N\,.
\end{aligned}
\right.
\ee
The tangent space of $(\bS\fH)^N\times C(\bbt^d)^N$ at $(\phi_1,\ldots,\phi_N,V_1,\ldots,V_N)$ is
\[
\begin{aligned}
T_{(\phi_1,\ldots,\phi_N)}(\bS\fH)^N\times C(\bbt^d)^N &=~(T_{(\phi_1,\ldots,\phi_N)}(\bS\fH)^N\times\{0_N\})\oplus(\{0_N\}\times C(\bbt^d)^N)
\\
& \simeq T_{(\phi_1,\ldots,\phi_N)}(\bS\fH)^N\oplus C(\bbt^d)^N\,.
\end{aligned}
\]
as explained in Appendix \ref{SLiouv}. If $Y\in T_{(\phi_1,\ldots,\phi_N)}(\bS\fH)^N$, the notation $Y\oplus 0_N$ designates 
\[
(Y,0_N)\in T_{(\phi_1,\ldots,\phi_N)}(\bS\fH)^N\times\{0_N\}\subset T_{(\phi_1,\ldots,\phi_N)}(\bS\fH)^N\times C(\bbt^d)^N\,.
\]
Notice that $X_N^f$ and $X_N^i$ correspond to the Schr\"odinger flow part and the Lohe coupling part, respectively. Next, we observe that
\[
\La\tfrac1{{\mathrm i} \hbar}(V_k+\om\star_x|U(\hbar t)\phi_k|^2|)\phi_k \Big |\phi_k\Ra_\fH=0\,,\quad k \in [N], 
\]
since the multiplication by the bounded real-valued function
\[
V_k+\om\star_x|U(\hbar t)\phi_k|^2
\]
is a self-adjoint operator on $\fH$. Hence, according to \eqref{TSH}
\[
X^f_N(t,\phi_1,\ldots,\phi_N,V_1,\ldots,V_N)_k\in T_{\phi_k}\bS\fH\,,\quad k \in [N],
\]
so that, by \eqref{TangSpace},
\[
X^f_N(t,\phi_1,\ldots,\phi_N,V_1,\ldots,V_N)\in T_{\phi_1}\bS\fH\times\ldots\times T_{\phi_N}\bS\fH=T_{(\phi_1,\ldots,\phi_N)}(\bS\fH)^N\,.
\]
Similarly, we have
\[
\RE\La\sum_{l=1}^N(\phi_l-\la\phi_k|\phi_l\ra\phi_k)\Big|\phi_k\Ra=\RE(\la\phi_c|\phi_k\ra-\la\phi_k|\phi_c\ra)=0\, \quad \text{with }\phi_c=\frac1N\sum_{l=1}^N\phi_l\,,
\]
so that, according to \eqref{TSH},
\[
X^i_N(t,\phi_1,\ldots,\phi_N,V_1,\ldots,V_N)_k\in T_{\phi_k}\bS\fH\,,\qquad k \in [N]\,,
\]
and hence, by \eqref{TangSpace},
\[
X^i_N(t,\phi_1,\ldots,\phi_N,V_1,\ldots,V_N)\in T_{\phi_1}\bS\fH\times\ldots\times T_{\phi_N}\bS\fH=T_{(\phi_1,\ldots,\phi_N)}(\bS\fH)^N\,.
\]

Choosing some initial $N$-oscillator distribution function $F_N^{0}\in\cP((\bS\fH)^N\times C(\bbt^d)^N)$ at time $t=0$, we define the $N$-oscillator distribution function $F_N(t)$ at time $t$ by the 
prescription\footnote{Let $E$ and $F$ be two sets, let $\mathcal A$ and $\mathcal B$ be $\sigma$-algebras on $E$ and $F$ respectively, and let $T:\,E\to F$ be an $\mathcal A-\mathcal B$-measurable map. 
We set $T\#\mu(B):=\mu(T^{-1}B)$ for all $B\in\mathcal B$, where $\mu$ is a probability measure on the measurable space $(E,\mathcal A)$. Then $T\#\mu$ is a probability measure on $(F,\mathcal B)$ 
(the push-forward of $\mu$ by $T$).} 
\[
\lb{DefFN(t)}
F_N(t):=\cS_N(t,0)\#F_N^{0}\,.
\]
Therefore, $F_N(t)$ so defined is the ``{\it Lagrangian weak solution}" of the Liouville equation on the $N$-oscillator phase space according to Lemma \ref{L-B.1} --- see also Definition \ref{D-B.1} for the notion
of divergence of a vector field on $(\bS\fH)^N\times C(\bbt^d)^N$ in Appendix \ref{SLiouv}:
\begin{equation}
\begin{cases}  \label{C-2}
\displaystyle  \d_tF_N+\Div(F_NX_N)=0, \quad (t, \phi, V) \in \bbr \times (\bS\fH)^N\times C(\bbt^d)^N,
\\
\displaystyle F_N\rstr_{t=0}=F_N^{0}\in\cP((\bS\fH)^N\times C(\bbt^d)^N)\,.
\end{cases}
\end{equation}

Note that since the vector field $X$ is of the form $X=Y\oplus 0_N$, one could consider $(\bS\fH)^N$ as the phase space and drop the dependence of $F_N$ on $V_k$. However, doing so would destroy 
the propagation of symmetry by $\cS_N(t,0)$, whose importance for the mean-field limit will become clear in the next subsection.

\subsection{Derivation of the kinetic SL equation} \label{sec:3.2}

In this subsection, we derive a mean-field equation for an approximate one-oscillator distribution function via the BBGKY hierarchy argument.

\subsubsection{Symmetric $N$-oscillator distributions}

For each $\si\in\fS_N$, we define
\begin{equation*}\label{DefTsigma}
T_\si(\phi_1,\ldots,\phi_N,V_1,\ldots,V_N):=(\phi_{\si(1)},\ldots,\phi_{\si(N)},V_{\si(1)},\ldots,V_{\si(N)}).
\end{equation*}
Suppose that the initial $N$-particle distribution function is symmetric in each argument, in other words
\begin{equation*}\label{SymFN0}
T_\si\# F^0_N=F^0_N\,,\quad\hbox{ for all }\si\in\fS_N\,.
\end{equation*}
Then, this symmetry property of the $N$-particle distribution function is propagated by the dynamics of the $N$-particle Liouville equation \eqref{C-2}:
\[
T_\si\#F_N(t)=F_N(t)\,,\quad\hbox{ for all }\si\in\mathfrak S_N\hbox{ and all }t\in \bbr.
\]
To see this, we write the vector field $X^f_N$ in \eqref{C-1-1} in the following form:
\begin{eqnarray*}
&& X^f_N(t,\phi_1,\ldots,\phi_N,V_1,\ldots,V_N)=(X^f_N(t,\phi_1,\ldots,\phi_N,V_1,\ldots,V_N)_k)_{1\le k\le N}\oplus 0_N\,, 
\\
&&  X^f_N(t,\phi_1,\ldots,\phi_N,V_1,\ldots,V_N)_k:=\tfrac1{{\mathrm i} \hbar}U(\hbar t)^*(V_k+\om\star_x|U(\hbar t)\phi_k|^2)U(\hbar t)\phi_k.
\end{eqnarray*}
Then, it is easy to see
\[
X^f_N(t,T_\si(\phi_1,\ldots,\phi_N,V_1,\ldots,V_N))=(X^f_N(t,\phi_1,\ldots,\phi_N,V_1,\ldots,V_N)_{\si(k)})_{1\le k\le N}\oplus 0_N
\]
for each $\si\in\fS_N$, since $X^f_N(t,\phi_1,\ldots,\phi_N,V_1,\ldots,V_N)_k$ depends on $\phi_k,V_k$ only.  Likewise, we write the vector field $X^i_N$ in the following form:
\[
X^i_N(t,\phi_1,\ldots,\phi_N,V_1,\ldots,V_N)=(X^i_N(t,\phi_1,\ldots,\phi_N,V_1,\ldots,V_N)_k)_{1\le k\le N}\oplus 0_N\,.
\]
Then, we can see 
\[
X^i_N(t,T_\si(\phi_1,\ldots,\phi_N,V_1,\ldots,V_N))=(X^i_N(t,\phi_1,\ldots,\phi_N,V_1,\ldots,V_N)_{\si(k)})_{1\le k\le N}\oplus 0_N\,,
\]
since
\begin{align*}
\begin{aligned}
& X^i_N(t,T_\si(\phi_1,\ldots,\phi_N,V_1,\ldots,V_N))_k 
\\
& \hspace{0.5cm} =\frac{\kappa}{2N}\sum_{l=1}^N \Big(\phi_{\si(l)}-\la\phi_{\si(k)}|\phi_{\si(l)}\ra\phi_{\si(k)} \Big)=\frac{\kappa}{2N}\sum_{j=1}^N \Big(\phi_j-\la\phi_{\si(k)}|\phi_{j}\ra\phi_{\si(k)} \Big) 
\\
& \hspace{0.5cm} =X^i_N(t,\phi_1,\ldots,\phi_N,V_1,\ldots,V_N)_{\si(k)}\,,
\end{aligned}
\end{align*}
where we used the substitution $j=\si(l)$ in the sum. These identities satisfied by the vector fields $X^f_N$ and $X^i_N$ imply that
\begin{equation} \label{New-1}
T_\si\circ\cS_N(t,t_0)=\cS_N(t,t_0)\circ T_\si\,,\quad t,t_0\in\bbr,~~\si\in\fS_N\,,
\end{equation}
by the uniqueness of the solution of the differential system \eqref{C-1} implied by the Cauchy-Lipschitz theorem recalled in Appendix \ref{S-CL}. Hence we have
\begin{align*}
\begin{aligned} \lb{SymFN(t)}
T_\si\#F_N(t) &=T_\si\#(\cS_N(t,0)\#F_N^0)=T_\si\circ\cS_N(t,0)\#F_N^0=\cS_N(t,0)\circ T_\si\#F_N^0
\\
&=\cS_N(t,0)\#(T_\si\#F_N^0)=\cS_N(t,0)\#F_N^0=F_N(t),
\end{aligned}
\end{align*}
for all $t\in\bbr$ and all $\si\in\fS_N$.

\subsubsection{Marginals of symmetric $N$-oscillator distributions}

For each $m \in [N]$, the $m$-particle marginal of a symmetric $N$-particle distribution $F_N$ is
\[
F_{N:m}\in\cP((\bS\fH)^m\times C(\bbt^d)^m)\,,
\]
which is defined by the relation:
\begin{align*}
\begin{aligned}
& \int_{(\bS\fH)^m\times C(\bT^d)^m}\chi_m(\phi_1,\ldots,\phi_m,V_1,\ldots,V_m)F_{N:m}(d\phi_1,\ldots,d\phi_m,dV_1,\ldots,dV_m)
\\
& \hspace{0.7cm} :=\int_{(\bS\fH)^N\times C(\bT^d)^N}\chi_m(\phi_1,\ldots,\phi_m,V_1,\ldots,V_m)F_N(d\phi_1,\ldots,d\phi_N,dV_1,\ldots,dV_N)
\end{aligned}
\end{align*}
for each test function $\chi_m\in C_b((\bS\fH)^m\times C(\bbt^d)^m)$. In particular, we apply this identity for $\chi_m\equiv 1$ to see that
\begin{align*}
\begin{aligned}
& \int_{(\bS\fH)^m\times C(\bT^d)^m}F_{N:m}(d\phi_1,\ldots,d\phi_m,dV_1,\ldots,dV_m) 
\\
& \hspace{1cm} =\int_{(\bS\fH)^N\times C(\bT^d)^N}F_N(d\phi_1,\ldots,d\phi_N,dV_1,\ldots,dV_N)=1.
\end{aligned}
\end{align*}
In other words,
\[  
F_{N:m}:=\Pi_m\#F_N\,, 
\]
where $\Pi_m$ is the projection on the $m$-particle phase space:
\[
\ba
\Pi_m:\,(\bS\fH)^N\times C(\bbt^d)^N&\to(\bS\fH)^m\times C(\bbt^d)^m
\\
(\phi_1,\ldots,\phi_N,V_1,\ldots,V_N)&\mapsto(\phi_1,\ldots,\phi_m,V_1,\ldots,V_m)\,.
\ea
\]

\subsubsection{Derivation of the kinetic equation}

Recall that 
\begin{equation}\label{NotPhiNVN}
\Phi_N:=(\phi_1,\ldots,\phi_N) \quad\text{ and }\quad\cV_N:=(V_1,\ldots,V_N). 
\end{equation}
At this point, we introduce the following vector fields:
\be\lb{DefY}
\begin{aligned}
&Y^f(t,\phi,V):=\tfrac1{i\hbar}U(\hbar t)^*(V+\om\star_x|U(\hbar t)\phi|^2)U(\hbar t)\phi\,, 
\\
&Y^i_k(t,\Phi_N):=\frac{1}{N}\sum_{l=1}^N(\phi_l-\la\phi_k|\phi_l\ra\phi_k) =(I-\bP_{\phi_k})\frac{1}{N}\sum_{l=1}^N\phi_l. 
\end{aligned}
\ee
Here we denote by $\bP_\phi$ the $L^2(\bbt^d)$-orthogonal projection on the complex line $\bbc \phi$:
\[ 
\bP_\phi\tilde\phi=\la\phi|\tilde\phi\ra\phi\,,\qquad\|\phi\|_\fH=1\,. 
\]
With the notation introduced so far, we can write the equation for the first marginal of $F_N$:
\begin{align*}
\begin{aligned} 
& \d_tF_{N:1}+\Div\left(F_{N:1}\left(\tfrac1{i\hbar}U(\hbar t)^*(V_1+\om\star_x|U(\hbar t)\phi_1|^2)U(\hbar t)\phi_1\oplus 0\right)\right) 
\\
& \hspace{3cm} + \frac{\kappa (N-1)}{2N}\Div\left(\left(F_{N:2}(\phi_2-\la\phi_1|\phi_2\ra\phi_1)\oplus 0_2\right)_{:1}\right)=0.
\end{aligned}
\end{align*}
Indeed, using the weak formulation of \eqref{C-2}, for each $\chi\in C^1_b(\bS\fH\times C(\bbt^d))$, we have
\begin{align}
\begin{aligned} \label{C-3}
& \frac{d}{dt}\int_{\bS\fH\times C(\bbt^d)} \chi(\phi_1,V_1)F_{N:1}(t,d\phi_1dV_1)
\\
& \hspace{0.5cm} =\frac{d}{dt}\int_{(\bS\fH)^N\times C(\bbt^d)^N}\chi(\phi_1,V_1)F_N(t,d\Phi_Nd\cV_N) 
\\
& \hspace{0.5cm} =\int_{(\bS\fH)^N\times C(\bbt^d)^N}\la d\chi(\phi_1,V_1),X^f_N(t,\Phi_N,\cV_N)_1\ra F_N(t,d\Phi_Nd\cV_N) 
\\
&\hspace{0.7cm} +\int_{(\bS\fH)^N\times C(\bbt^d)^N}\la d\chi(\phi_1,V_1),X^i_N(t,\Phi_N,\cV_N)_1\ra F_N(t,d\Phi_Nd\cV_N) 
\\
& \hspace{0.5cm} =\int_{(\bS\fH)^N\times C(\bbt^d)^N}\la d_{\phi_1}\chi(\phi_1,V_1),Y^f(t,\phi_1,V_1)\ra F_N(t,d\Phi_Nd\cV_N) 
\\
&\hspace{0.7cm} +\int_{(\bS\fH)^N\times C(\bbt^d)^N}\la d_{\phi_1}\chi(\phi_1,V_1),Y^i_1(t,\Phi_N)\ra F_N(t,d\Phi_Nd\cV_N)\,.
\end{aligned}
\end{align}
The penultimate integral on the last right hand side of \eqref{C-3} is recast as follows:
\begin{align*}
\begin{aligned}
& \int_{(\bS\fH)^N\times C(\bbt^d)^N}\la d_{\phi_1}\chi(\phi_1,V_1),Y^f(t,\phi_1,V_1)\ra F_N(t,d\Phi_Nd\cV_N) 
\\
& \hspace{1cm} =
\int_{\bS\fH\times C(\bbt^d)}\la d_{\phi_1}\chi(\phi_1,V_1),Y^f(t,\phi_1,V_1)\ra F_{N:1}(t,d\phi_1dV_1).
\end{aligned}
\end{align*}
We also observe that 
\begin{align*}
\begin{aligned}
& \int_{(\bS\fH)^N\times C(\bbt^d)^N}\la d_{\phi_1}\chi(\phi_1,V_1),Y^i_1(t,\Phi_N)\ra F_N(t,d\Phi_Nd\cV_N) 
\\
& \hspace{0.5cm}  =\frac1N\sum_{l=1}^N\int_{(\bS\fH)^N\times C(\bbt^d)^N}\la d_{\phi_1}\chi(\phi_1,V_1),(I-\bP_{\phi_1})\phi_l\ra F_N(t,d\Phi_Nd\cV_N) 
\\
& \hspace{0.5cm}  =\frac1N\int_{(\bS\fH)^N\times C(\bbt^d)^N}\la d_{\phi_1}\chi(\phi_1,V_1),(I-\bP_{\phi_1})\phi_1\ra F_N(t,d\Phi_Nd\cV_N) 
\\
& \hspace{0.7cm} +\frac{N-1}N\int_{(\bS\fH)^N\times C(\bbt^d)^N}\la d_{\phi_1}\chi(\phi_1,V_1),(I-\bP_{\phi_1})\phi_2\ra F_N(t,d\Phi_Nd\cV_N) 
\\
& \hspace{0.5cm} =\frac{N-1}N\int_{(\bS\fH)^2\times C(\bbt^d)^2}\la d_{\phi_1}\chi(\phi_1,V_1),(I-\bP_{\phi_1})\phi_2\ra F_{N:2}(t,d\Phi_2d\cV_2),
\end{aligned}
\end{align*}
where we have used the relation:
\[ (I-\bP_{\phi_1})\phi_1=0. \]
Now, if $X$ is a $C_b$-vector field on $\bS\fH\times C(\bbt^d)$ of the form $X=Y\oplus 0$, we henceforth denote
\[ 
\Div_\phi(fY):=\Div(fX)\,. 
\]
Therefore, one has 
\begin{align*}
\begin{aligned}
& \frac{d}{dt}\int_{\bS\fH\times C(\bbt^d)}\chi(\phi_1,V_1)F_{N:1}(t,d\phi_1dV_1)
\\
& \hspace{0.5cm} =\int_{\bS\fH\times C(\bbt^d)}\la d_{\phi_1}\chi(\phi_1,V_1),Y^f(t,\phi_1,V_1)\ra F_{N:1}(t,d\phi_1dV_1) 
\\
&\hspace{0.7cm} +\frac{\kappa(N-1)}{2N}\int_{(\bS\fH)^2\times C(\bbt^d)^2}\la d_{\phi_1}\chi(\phi_1,V_1),(I-\bP_{\phi_1})\phi_2\ra F_{N:2}(t,d\Phi_2d\cV_2),
\end{aligned}
\end{align*}
for each $\chi\in C^1_b(\bS\fH\times C(\bbt^d))$, which is the weak formulation of
\[
\d_tF_{N:1}\!+\!\Div_{\phi_1}(F_{N:1}Y^f(t,\phi_1,V_1))\!+\!\tfrac{\kappa(N-1)}{2N}\Div_{\phi_1}\left((I\!-\!\bP_{\phi_1})\int_{\bS\fH\times C(\bbt^d)}\phi_2F_{N:2}d\phi_2dV_2\right)=0.
\]
At this point, we arrive at the mean-field kinetic equation by the following procedure. \newline

Assuming that $F_N=f^{\otimes N}$, we pass to the limit formally as $N\to\infty$ in both sides of the equation above to obtain
\be\lb{MFKinEq}
\begin{aligned}
& \d_tf +\Div_\phi\left(f\left(\tfrac1{i\hbar}U(\hbar t)^*(V+\om\star_x|U(\hbar t)\phi|^2)U(\hbar t)\phi\right)\right) 
\\
& \hspace{3cm} + \tfrac12\kappa \Div_\phi\left(f\int_{\bS\fH\times C(\bT^d)}(\tilde\phi-\la\phi|\tilde\phi\ra\phi)f(t,d\tilde\phi d\tilde V)\right)=0\,.
\end{aligned}
\ee
An alternate formulation of this equation is as follows: we set 
\[ 
\pi_1:\,\bS\fH\times C(\bbt^d)\ni(\phi,V)\mapsto\phi\in\bS\fH \quad\text{ and }\quad\rho_f(t):=\pi_1\#f(t). 
\]
Then, we have
\begin{align*}
\begin{aligned}
& \d_tf+\Div_\phi\left(f\left(\tfrac1{i\hbar}U(\hbar t)^*(V+\om\star_x|U(\hbar t)\phi|^2)U(\hbar t)\phi\right)\right)
\\
& \hspace{3cm} +\tfrac12 \kappa \Div_\phi\left(f(I-\bP_\phi)\int_{\bS\fH}\tilde\phi\rho_f(t,d\tilde\phi)\right)=0\,.
\end{aligned}
\end{align*}
Two remarks are in order, before we discuss the mathematical properties of \eqref{MFKinEq}.
\smallskip
In fact, one could have arrived at the equivalent formulation of the mean-field equation by a more intuitive and straightforward approach. Now, we return to the original SL model:
\[
\left\{
\ba
{}&{\mathrm i} \hbar\d_t\psi_k=(H_k+\om\star_x|\psi_k|^2)\psi_k+\tfrac12 {\mathrm i}\hbar \kappa (1-\bP_{\psi_k})\frac1N\sum_{l=1}^N\psi_l\,,
\\
&\psi_k\rstr_{t=0}=\psi_k^{0}, \quad k \in [N]\,.
\ea
\right.
\]
The key idea in the mean-field limit is to appeal to some form of the law of large number in order to replace the ensemble average
\[
\frac1N\sum_{l=1}^N\psi_l \qquad \hbox{ with } \quad \int_{\bS\fH}\tilde\psi\rho(t,d\tilde\psi), 
\]
where $\rho(t)$ is the probability distribution of $\phi$ on the unit sphere $\bS\fH$. This suggests replacing the SL equation above for the $k$-th oscillator
with
\[
{\mathrm i}\hbar\d_t\psi_k=(H_k+\om\star_x|\psi_k|^2)\psi_k+\tfrac12{\mathrm i}\hbar\kappa (1-\bP_{\psi_k})\int_{\bS\fH}\tilde\psi\rho(t,d\tilde\psi)\,.
\]
Then, the density $\rho(t,d\tilde\psi)$ is propagated by the evolution of $(\psi_1,\ldots,\psi_N)$ defined by the system of PDEs above. Since $H_k$ depends on $k$, it is impossible to define 
an evolution for $\phi_k$ independently of $k$, and this is the reason for considering the joint distribution of $\phi_k$ and $V_k$. This last system of equations for $k \in [N]$ can be viewed 
as the system of characteristics for the mean-field equation, in the same way as the original SL model is the system of characteristics of the $N$-oscillator Liouville equation.

\smallskip
Conjugating the evolution of the $N$-particle distribution by the free Schr\"odinger group as above might seem useless at first sight and requires perhaps some justification. In terms 
of the original wave functions $\psi_k$ instead of $\phi_k$ defined by $\phi_k(t,\cdot)=U(ht)^*\psi_k(t,\cdot)$, the mean-field equation would be recast as
\begin{align*}
\begin{aligned}
& \d_tg+\Div_\psi\left(g\left(\tfrac12i\hbar\Dlt_x\psi+\tfrac1{i\hbar}(V+\om\star_x|\psi|^2)\psi\right)\right)
\\
& \hspace{3cm} +\tfrac12 \kappa \Div_\psi\left(g(I-\bP_\psi)\int_{\bS\fH}\tilde\psi\rho_g(t,d\tilde\psi)\right)=0.
\end{aligned}
\end{align*}
There is an obvious difficulty with this formulation, which implicitly assumes that $g$ is concentrated on $\bS\fH\cap H^2(\bT^d)$. Indeed, $\mathrm{i} \Dlt_x\psi\in T_\psi\bS\fH$ only if $\psi\in H^2(\bbt^d)$. Since $U(t)$ is a strongly continuous unitary group on $\fH$, conjugating each equation in the SL system by $U(\hbar t)$ avoids this difficulty.

\subsection{Existence and uniqueness theory for the kinetic SL equation} \label{sec:3.3}


In this subsection, we prove the existence and uniqueness of the mean-field flow, i.e. the characteristic flow for \eqref{MFKinEq}. In what follows, the potential $V\in C(\bbt^d)$ and the Hartree kernel 
$\om\in L^\infty(\bbt^d)$ are given, together with the coupling constant $\ka$.

\begin{Thm}\lb{T-MFFlow}
Let $f^{0}\in {\mathcal P}_2(\mathbf S\mathfrak H\times C(\mathbb T^d))$ be given. Then, for each $\phi^{0}\in\bS\fH$, there exists a unique solution, denoted by $t\mapsto\phi(t|\phi^0)$, of the Cauchy problem:
\begin{equation}\label{DiffEq-phi}
\begin{cases}
\displaystyle \partial_t\phi(t|\phi^{0})=Y^f(t,\phi(t|\phi^{0}),V)+\tfrac\kappa2Y^{mf}[\rho_f](t,\phi(t|\phi^{0}))\,, \quad t > 0,
\\
\displaystyle \phi(0|\phi^{0})=\phi^{0}\,, \quad \rho_f(t)=\phi(t|\cdot)\#\rho_{f^{0}}\,,
\end{cases}
\end{equation}
where the vector field $Y^f$ is defined in \eqref{DefY}, and $Y^{mf}[\rho]$ is defined as follows.
\begin{equation} \label{New-1-1}
Y^{mf}[\rho](t,\phi):=(I-\bP_\phi)\int_{\bS\fH}\tilde\phi\rho(t,d{\tilde \phi})\,.
\end{equation}
\end{Thm}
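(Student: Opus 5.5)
The plan is a fixed point argument on the mean-field term, not on the individual trajectories. The coupling enters the characteristic equation \eqref{DiffEq-phi} only through the barycenter
\[
m(t):=\int_{\bS\fH}\tilde\phi\,\rho_f(t,d\tilde\phi)\in\fH,\qquad \|m(t)\|_\fH\le 1 .
\]
I would therefore treat the characteristics as depending on $(\phi^0,V)$ jointly: $V$ is carried along, since $\partial_t V=0$, and $\rho_f(t)=\pi_1\#f(t)$ with $f(t)=\Phi(t)\#f^0$, where $\Phi(t)(\phi^0,V)=(\phi(t|\phi^0,V),V)$.

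\textbf{Frozen field.} Fix $T>0$ and let $E_T$ be the closed ball of radius $1$ in $C([0,T];\fH)$. For each $m\in E_T$, solve
\[
\partial_t\phi=Y^f(t,\phi,V)+\tfrac\kappa2(I-\bP_\phi)m(t),\qquad \phi(0)=\phi^0 .
\]
This is a non-autonomous ODE on $\fH$ whose right side is continuous in $t$ and locally Lipschitz in $\phi$ on bounded sets. The Lipschitz constant on the unit ball is
\[
\tfrac1\hbar\bigl(\|V\|_{L^\infty}+3\|\om\|_{L^\infty}\bigr)+\tfrac{3\kappa}{2}.
\]
It comes from the Hartree term: $\||U\phi|^2-|U\tilde\phi|^2\|_{L^1}\le 2\|\phi-\tilde\phi\|_\fH$ since $U$ is unitary. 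The projection term is quadratic in $\phi$.

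Exactly as in Section \ref{sec:2.3}, $\RE\la\text{RHS}|\phi\ra=0$, so $\|\phi(t)\|_\fH=1$ and the solution never leaves $\bS\fH$. The Cauchy-Lipschitz theorem of Appendix \ref{S-CL} then gives a unique global solution $\phi_m(t|\phi^0,V)$. Continuity in $(\phi^0,V)$ follows from Gronwall, which makes the push-forward measurable.

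\textbf{Fixed-point map.} Define
\[
\mathcal T[m](t):=\int_{\bS\fH\times C(\bbt^d)}\phi_m(t|\phi^0,V)\,f^0(d\phi^0dV).
\]
Since $\|\phi_m\|_\fH=1$, $\mathcal T$ maps $E_T$ into itself. For $m_1,m_2\in E_T$, subtract the two ODEs and use the bound above. The potential term has the same $V$ in both, so it contributes $\tfrac1\hbar\|V\|_{L^\infty}\|\phi_1-\phi_2\|_\fH$. This gives
\[
\frac{d}{dt}\|\phi_{m_1}-\phi_{m_2}\|_\fH\le L(V)\,\|\phi_{m_1}-\phi_{m_2}\|_\fH+\tfrac\kappa2\|m_1(t)-m_2(t)\|_\fH,
\]
with $L(V)=\tfrac1\hbar(\|V\|_{L^\infty}+3\|\om\|_{L^\infty})+\tfrac{3\kappa}2$.

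\textbf{Main obstacle.} $L(V)$ depends on $V$, and $f^0$ need not have $\|V\|_{L^\infty}$ bounded on its support; finite second moment alone does not make $e^{L(V)t}$ integrable.

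The first thing to try is a different way of estimating the potential term: cancel it in the difference by handling $\tfrac1{i\hbar}U^*VU$ as a skew-adjoint linear part. Since $\tfrac1{i\hbar}U(\hbar t)^*VU(\hbar t)$ is skew-adjoint, it contributes $0$ to $\tfrac{d}{dt}\|\phi_1-\phi_2\|^2_\fH$, because it acts on the difference linearly. The Hartree term is bounded by $\tfrac3\hbar\|\om\|_{L^\infty}$ times the difference, also using $\||U\phi_1|^2-|U\phi_2|^2\|_{L^1}\le 2\|\phi_1-\phi_2\|_\fH$. So the Lipschitz constant becomes $L=\tfrac3\hbar\|\om\|_{L^\infty}+\tfrac{3\kappa}2$, uniform in $V$. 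Gronwall then gives
\[
\sup_{[0,t]}\|\phi_{m_1}-\phi_{m_2}\|_\fH\le \tfrac\kappa2\, t\,e^{Lt}\sup_{[0,t]}\|m_1-m_2\|_\fH .
\]
Averaging against $f^0$ transfers this to $\mathcal T$. Hence $\mathcal T$ is a contraction on $E_{T_0}$ for $T_0$ small, with $T_0$ depending only on $\kappa,\om,\hbar$. Equivalently, $\mathcal T$ is a contraction for a weighted norm $\sup_t e^{-\lambda t}\|m(t)\|_\fH$ with $\lambda$ large.

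\textbf{Conclusion.} Because $T_0$ is independent of the data, iterating on $[T_0,2T_0],\dots$ yields a global fixed point $m$. The associated $\phi_m$ solves \eqref{DiffEq-phi} with $\rho_f(t)=\pi_1\#(\Phi(t)\#f^0)$, whose barycenter is $m$ by construction. Uniqueness follows because any solution defines a barycenter $m$ that is a fixed point of $\mathcal T$, and uniqueness of the trajectory for that $m$ comes from Cauchy-Lipschitz.
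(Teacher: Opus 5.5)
Your argument is correct, but it reorganizes the paper's proof rather than reproducing it.

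\textbf{How the two routes differ.} The paper runs a Picard iteration on the trajectory family: $\phi_n(t|\phi^0)$ solves the characteristic ODE with the mean field built from $\phi_{n-1}$. Convergence then follows from the factorial bound on $\eta_n(t)=\sup_{\phi^0}\|\phi_{n+1}(t|\phi^0)-\phi_n(t|\phi^0)\|_\fH$. You instead make the barycenter $m\in C([0,T];\fH)$ the unknown and prove that $\mathcal T$ is a contraction, either on a short time interval with restarts or in a weighted norm. The iteration itself is the same: the paper's iterates satisfy $m_n=\mathcal T[m_{n-1}]$, with $m_n(t)=\int\phi_n(t|\tilde\phi)\,f^0(d\tilde\phi\, dV)$.

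\textbf{What your estimates buy.} First, each iterate solves an ODE, so $\phi_{n+1}-\phi_n$ has to be controlled by a frozen-field Gronwall bound like yours. The paper's displayed formula for $\phi_{n+1}-\phi_n$ is the one for a fully explicit Picard step, which does not match the scheme it defines.

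Second, and more substantively, you observe that $\tfrac1{i\hbar}U(\hbar t)^*VU(\hbar t)$ is skew-adjoint and drops out of $\tfrac{d}{dt}\|\phi_{m_1}-\phi_{m_2}\|_\fH^2$. This gives a Lipschitz constant independent of $V$. You also let the characteristics depend on $(\phi^0,V)$ and set $\rho_f(t)=\pi_1\#(\Phi(t)\#f^0)$. The paper instead treats $V$ as a single given potential and writes $\rho_f(t)=\phi(t|\cdot)\#\rho_{f^0}$ with trajectories indexed by $\phi^0$ alone. Its constants $L_0$ and $L_1$ contain $\|V\|_{L^\infty}$. That is enough when $V$ is fixed or bounded on the support of $f^0$, as under $P_\infty<\infty$ in Section 6. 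Your version covers any $f^0\in\cP(\bS\fH\times C(\bbt^d))$ in which $V$ varies, possibly unboundedly, over the support, and it never uses the finite second moment.
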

\begin{proof} 
We split the proof into two steps. \newline

\noindent $\bullet$~Step A (Existence part):~Let $f^{0}\in\mathcal P_2(\mathbf S\mathfrak H\times C(\mathbb T^d))$ be given. Then, for each $\phi^{0}\in\mathbf S\mathfrak H$, we construct the sequence $\phi_n\equiv\phi_n(t|\phi^{0})\in\mathbf S\mathfrak H$ 
by the following prescription:
\begin{equation}\label{DiffEq-phin}
\left\{
\begin{aligned}
{}&\d_t\phi_n(t|\phi^{0})=Y^f(t,\phi_n(t|\phi^{0}),V ) \\
& \hspace{1.8cm} +\tfrac\kappa2\left(I-\mathbf P_{\phi_n(t|\phi^{0})}\right)\int_{\mathbf S\mathfrak H\times C(\mathbb T^d)}\phi_{n-1}(t|{\tilde \phi})f^{0}(d{\tilde \phi} dV)\,,\quad t >0,
\\
&\phi_n(0|\phi^{0})=\phi^{0}, \quad n \geq 1, \\
& \phi_0(t|\phi^{0}) \equiv \phi^{0}, \quad t \geq 0.
\end{aligned}
\right.
\end{equation}
That this differential system has a global solution $\bbr\ni t\mapsto\phi_n(t|\phi^{0})\in\bS\fH$ follows from the fact that
\[
\RE \Big \la Y^f(t,\phi_n(t|\phi^{0}),V)|\phi_n(t|\phi^{0}) \Big \ra=0\,,
\]
while
\[
\Big \la\phi_n(t|\phi^{0})|\left(I-\mathbf P_{\phi_n(t|\phi^{0})}\right)\xi \Big \ra=0\quad\text{ for all }\xi\in\fH\,.
\]
Hence the right-hand side of the system above defines continuous a vector field on $\bS\fH$. \newline

Next we verify that the vector field  is Lipschitz continuous uniformly in $t$. By the Cauchy-Schwarz inequality,
\[
\| |\phi|^2-|\psi|^2 \|_{L^1}=\|\phi(\overline\phi-\overline\psi)+(\phi-\psi)\overline\psi\|_{L^1}\le(\|\phi\|_\fH+\|\psi\|_\fH)\|\phi-\psi\|_\fH\,,
\]
so that
\[
\|Y^f(t,\phi,V)-Y^f(t,\tilde\phi,V)\|_\fH\le L_0\|\phi-{\tilde \phi}\|_\fH\,,
\]
with
\[
L_0:=\frac3\hbar(\|V\|_{L^\infty}+\|\om\|_{L^\infty})\,.
\]
Similar detailed calculations will be provided in \eqref{D-3-1} in a more general setting. On the other hand, we use $\|\phi\|_{\fH}=\|\psi\|_{\fH}=1$ to find 
\begin{align}\label{LipPphi}
\begin{aligned}
& \|(\mathbf P_\phi-\mathbf P_\psi)\xi\|_{\fH}  \le\|\phi-\psi\|_{\fH} |\langle\phi|\xi\rangle|+\|\psi\|_{\fH} |\langle\phi-\psi|\xi\rangle|  \\
& \hspace{2.4cm} \le \Big (\|\phi-\psi\|_{\fH} \|\phi\|_{\fH} +\|\psi\|_{\fH} \|\phi-\psi\|_{\fH} \Big )\|\xi\|_{\fH} \le 2\|\phi-\psi\|_{\fH} \|\xi\|_{\fH}.
\end{aligned}
\end{align}
This yields
\begin{equation} \label{New-2}
\left\|\left(\mathbf P_{\phi}-\mathbf P_{\psi}\right)\left(\int_{\bS\fH\times C(\mathbb T^d)} \psi f^{0}(d\psi dV)\right) \right\|_\fH\le 2\|\phi-\psi\|_\fH\,.
\end{equation}
These inequalities and the Cauchy-Lipschitz theorem in Appendix \ref{S-CL} --- see also Appendix \ref{SLiouv} --- imply the global existence of $\bbr\ni t\mapsto\phi_n(t|\phi^0)\in\bS\fH$
for each $n\ge 0$, once $\phi_{n-1}(~\cdot~|\phi^0)$ is known. In what follows, we consider the convergence of $\{ \phi_n \}$. It follows from \eqref{DiffEq-phin} that 
\begin{align*}
\begin{aligned}
& \phi_{n+1}(t|\phi^{0})-\phi_n(t|\phi^{0}) \\
& \hspace{0.5cm} =\int_0^t(Y^f(s,\phi_n(s|\phi^{0}),V)-Y^f(s,\phi_{n-1}(s|\phi^{0}),V))ds
\\
& \hspace{0.7cm} +\tfrac\kappa2\int_0^t\left(I-\mathbf P_{\phi_{n-1}(s|\phi^{0})}\right)\left(\int_{\mathbf S\mathfrak H\times C(\mathbb T^d)}(\phi_n(s|\psi)-\phi_{n-1}(s|\psi))f^{0}(d\psi dV)\right)ds
\\
& \hspace{0.7cm} +\tfrac\kappa2\int_0^t\left(\mathbf P_{\phi_{n-1}(s|\phi^{0})}-\mathbf P_{\phi_n(s|\phi^{0})}\right)\left(\int_{\mathbf S\mathfrak H\times C(\mathbb T^d)}\phi_n(s|\psi)f^{0}(d\psi dV)\right)ds.
\end{aligned}
\end{align*}
By \eqref{LipPphi} and 
\[
\begin{aligned}
\left \|\int_{\mathbf S\mathfrak H\times C(\mathbb T^d)}\!\phi_n(s|\psi)f^{0}(d\psi dV)\right \|_{\fH}\le&\int_{\mathbf S\mathfrak H\times C(\mathbb T^d)}\|\phi_n(s|\psi)\|_{\fH} f^{0}(d\psi dV)
\\
\le&\int_{\mathbf S\mathfrak H\times C(\mathbb T^d)}\!f^{0}(d\psi dV)\!=\!1\,,
\end{aligned}
\]
one has
\[
 \left\|\left(\mathbf P_{\phi_{n-1}(s|\phi^{0})}\!-\!\mathbf P_{\phi_n(s|\phi^{0})}\right)\left(\int_{\mathbf S\mathfrak H\times C(\mathbb T^d)}\phi_n(s|\psi)f^{0}(d\psi dV)\right)ds\right\|_{\fH}\!\le\!2\|\phi_{n-1}(s|\phi^{0})\!-\!\phi_n(s|\phi^{0})\|_{\fH}.
\]
Thus, we have
\begin{align*}
\begin{aligned}
& \|\phi_{n+1}(t|\phi^{0})-\phi_n(t|\phi^{0})\|_{\fH} \\
& \hspace{0.8cm} \le(L_1+\kappa)\left|\int_0^t\|\phi_n(t_1|\phi^{0})-\phi_{n-1}(t_1|\phi^{0})\|_{\fH} dt_1\right|
\\
& \hspace{1cm} +\tfrac\kappa2\Bigg|\int_0^t\underbrace{\left\|\int_{\mathbf S\mathfrak H\times C(\mathbb T^d)}(\phi_n(s|\psi)-\phi_{n-1}(s|\psi))f^0(d\psi dV)\right\|_{\fH}}_{\le\sup_{\|\psi\|_{\fH}=1}\|\phi_n(s|\psi)-\phi_{n-1}(s|\psi)\|_{\fH}}ds\Bigg|.
\end{aligned}
\end{align*}
We set 
\[
\eta_n(t):=\sup_{\|\phi^{0}\|_{\fH}=1}\|\phi_{n+1}(t|\phi^{0})-\phi_n(t|\phi^{0})\|_{\fH}.
\]
Then, one has
\begin{align*}
\begin{aligned}
\eta_n(t) &\le \Big ( L_1+ \frac{3 \kappa}{2} \Big )\left|\int_0^t\eta_{n-1}(t_1)dt_1\right| \leq (L_1+\tfrac{3\kappa}2)^2\left|\int_0^t\left|\int_0^{t_1}\eta_{n-2}(t_2)dt_2\right|dt_1\right| \\
&\le\ldots\le\frac{(L_1+\tfrac{3\kappa}2)^n|t|^n}{n!}\sup_{|s|\le|t|}\eta_0(s), \quad \mbox{for all}~n \geq 1.
\end{aligned}
\end{align*}
Besides, we have
\[
\eta_0(t)\le\left|\int_0^t\sup_{\|\phi^0\|=1}\|Y^f(s,\phi^{0},V)\|ds\right|\le\frac{|t|}{\hbar}(\|V\|_{L^\infty}+\|\omega\|_{L^\infty}),
\]
where we used the relation:
\[
\left(I-\mathbf P_{\phi^{0}}\right)\int_{\mathbf S\mathfrak H\times C(\mathbb T^d)}\phi^{0}f^{0}(d\psi dV)=\left(I-\mathbf P_{\phi^{0}}\right)\phi^{0}=0\,.
\]
Thus, we have
\[
\eta_n(t)\le\frac{(L_1+\tfrac{3\kappa}2)^n|t|^{n+1}}{n!\hbar}(\|V\|_{L^\infty}+\|\omega\|_{L^\infty})\,,
\]
so that the series
\[
\sum_{n\ge 0}\eta_n(t)<\infty
\]
for all $t\in \bbr$. Hence $\phi_n(t|\phi^{0})\to\phi(t|\phi^{0})$ uniformly in $t\in[-T,T]$ for all $T>0$ and in $\phi^{0}\in\mathbf S\mathfrak H$. In particular, $(t,\phi^{0})\mapsto\phi(t|\phi^{0})$ 
is continuous from $\bbr \times\mathbf S\mathfrak H$ to $\mathbf S\mathfrak H$ (as a uniform limit of a sequence of continuous functions). Passing to the limit in the equality defining 
$\phi_{n+1}$ in terms of $\phi_n$, we conclude that
\[
\begin{aligned}
\phi(t|\phi^{0})=\phi^{0}&+\int_0^tY^f(s,\phi(s|\phi^{0}),V)ds\!+\!\int_0^t\left(I-\mathbf P_{\phi(s|\phi^{0})}\right)\left(\int_{\mathbf S\mathfrak H\times C(\mathbb T^d)}\phi(s|\psi)f^{0}(d\psi dV)\right)ds
\\
=\phi^{0}&+\int_0^tY^f(s,\phi(s|\phi^{0}),V)ds+\tfrac\kappa2\int_0^tY^{mf}[\rho_f](s,\phi(s|\phi^{0}))ds\,,
\end{aligned}
\]
denoting $\rho_f(t,\cdot)=\phi(t,\cdot)_\#\rho_{f^{0}}$, where $\rho_{f^{0}}$ is the push-forward of the probability measure $f^{0}$ by the projection $(\psi,V)\mapsto\psi$. This equality implies that 
$t\mapsto\phi(t|\phi^{0})$ is of class $C^1$ on $\bbr$, and that
\[
\begin{cases}
\displaystyle \partial_t\phi(t|\phi^{0})=Y^f(t,\phi(t|\phi^{0}),V)+\tfrac\kappa2Y^{mf}[\rho_f](t,\phi(t|\phi^{0})), \quad t > 0, \\
\displaystyle \phi(0|\phi^{0})=\phi^{0}\,.
\end{cases}
\]
This completes the existence part. \newline

\noindent $\bullet$~Step B (Uniqueness part):~Assume that there exists another solution $(t,\phi^{0})\mapsto\tilde\phi(t|\phi^{0})$, continuous from $\bbr \times\mathbf S\mathfrak H$ to $\mathbf S\mathfrak H$ such that
\begin{align*}
\begin{aligned}
\tilde\phi(t|\phi^{0}) &=\phi^{0} +\int_0^tY^f(s,\tilde\phi(s|\phi^{0}),V)ds+\tfrac\kappa2\int_0^tY^{mf}[\rho_{\tilde f}](s,\tilde\phi(s|\phi^{0}))ds
\\
&=\phi^{0}\!+\!\int_0^tY^f(s,\tilde\phi(s|\phi^{0}),V)ds\!+\!\int_0^t\left(I\!-\!\mathbf P_{\tilde\phi(s|\phi^{0})}\right)\left(\int_{\mathbf S\mathfrak H\times C(\mathbb T^d)}\tilde\phi(s|\psi)f^{0}(d\psi dV)\right)ds\,,
\end{aligned}
\end{align*}
denoting $\rho_{\tilde f}(t,\cdot)=\tilde \phi(t,\cdot)\#\rho_{f^{0}}$. We use the same argument as in Step A to find that
\[
\sup_{\|\phi^{0}\|_{\fH}=1}\|\phi(t|\phi^{0})-\tilde\phi(t|\phi^{0})\|_{\fH} \le (L_1+\tfrac{3\kappa}2)\left|\int_0^t\sup_{\|\phi^{0}\|_{\fH}=1}\|\phi(s|\phi^{0})-\tilde\phi(s|\phi^{0})\|_{\fH} ds\right|.
\]
Then, we use Gronwall's lemma and  $\phi(0|\phi^{0}) = \tilde\phi(0|\phi^{0}) = \phi^0$ to see
\[
\sup_{\|\phi^{0}\|_{\fH}=1}\|\phi(t|\phi^{0})-\tilde\phi(t|\phi^{0})\|_{\fH}=0\,,\quad\text{ for all }t\in \bbr\,.
\]
This yields the uniqueness part:
\[ 
\phi(t|\phi^{0}) = \tilde\phi(t|\phi^{0}) \quad \mbox{in}~\fH,
\]
for each $t > 0$.
\end{proof}


\section{Mean-field limit: Fluctuation estimate} \label{sec:4}
\setcounter{equation}{0} 
In this section, we present a quantatitive fluctuation estimate from the SL model to the kinetic SL equation which has been derived in the previous section via the BBGKY hierarchy argument. 


\subsection{Dynamics of couplings}


We recall the definition \eqref{Def-SN(tt0)} of the flow map $\mathcal S_N(t,t_0)$: for all $\phi_1^0,\ldots,\phi_N^0\in\mathbf S\mathfrak H$ and all $V_1,\ldots,V_N\in C(\mathbb T^d)$,
\[
\mathcal S_N(t,t_0)(\phi_1^0,\ldots,\phi_N^0,V_1,\ldots,V_N)=(\phi_1(t),\ldots,\phi_N(t),V_1,\ldots,V_N),
\]
where $t\mapsto(\phi_1(t),\ldots,\phi_N(t))$ is the solution to the system of differential equations:
\[
\left\{
\begin{aligned}
{}&\partial_t\phi_k=Y^f(t,\phi_k,V_k)+\tfrac12\kappa Y^i_k(t,\Phi_N)\,,\quad t > t_0, \quad  k \in [N],
\\
&\phi_k(t_0)=\phi_k^0\in\mathbf S\mathfrak H\,,
\end{aligned}
\right.
\]
where we recall that $\Phi_N(t)=(\phi_1(t),\ldots,\phi_N(t))$. The vector fields  $Y^f$ and $Y^i$ have been defined in the preceding section. Along with these vector fields, we also consider the mean-field vector field:
\[ 
Y^{mf}[\rho_f](t,\phi):=\int_{\bS\fH}(\tilde\phi-\la\phi|\tilde\phi\ra\phi)\rho_f(t,d\tilde\phi)=(I-\bP_\phi)\int_{\bS\fH}\tilde\phi\rho_f(t,d\tilde\phi)\,.
\]
With given $\rho_f$, we consider the differential equation:
\[
\left\{
\begin{aligned}
{}&\partial_t\tilde\phi=Y^f(t,\tilde\phi,V)+\tfrac12\kappa Y^{mf}[\rho_f](t,\tilde\phi)\,,
\\
&\tilde\phi(t_0)=\tilde\phi^0\in\mathbf S\mathfrak H\,,
\end{aligned}
\right.
\]
whose solution is denoted by 
\begin{equation}\label{Def-s(tt0)}
\tilde\phi(t)=:\mathfrak s(t,t_0)\tilde\phi^0\,.
\end{equation}
Finally, we consider the map:
\begin{align*}\label{Def-TN(tt0)}
\begin{aligned}
& \mathcal T_N(t,t_0)(\phi_1^0,\ldots,\phi_N^0,V_1,\ldots,V_N;\tilde\phi_1^0,\ldots,\tilde\phi_N^0,\tilde V_1,\ldots,\tilde V_N)
\\
& \hspace{0.5cm} =(\mathcal S_N(t,t_0)(\phi_1^0,\ldots,\phi_N^0,V_1,\ldots,V_N);\mathfrak s(t,t_0)\tilde\phi^0_1,\ldots,\mathfrak s(t,t_0)\tilde\phi^0_N,\tilde V_1,\ldots,\tilde V_N).
\end{aligned}
\end{align*}

\begin{Lem}\label{L4.1}
Let $Q^{0}_N$ be a coupling of $F^{0}_N$ and $(f^{0})^{\otimes N}$, and we set                                                                                                                                                                                                                                                                                                                                                                                                                                                                                                                                                                                                             
\[
Q_N(t)=\mathcal T_N(t,0)\#Q_N^0\,.
\]
Then, the following assertions hold.
\begin{enumerate}
\item
For each $t\ge 0$, the probability measure $Q_N(t)$ on $(\mathbf S\mathfrak H)^N\!\times\!C(\mathbb T^d)^N\!\times\!(\mathbf S\mathfrak H)^N\!\times\!C(\mathbb T^d)^N$ is a coupling
of $F_N(t)$ and $f(t)^{\otimes N}$.
\vspace{0.1cm}
\item
If $Q^{0}_N$ is a symmetric coupling of $F^{0}_N$ and $(f^{0})^{\otimes N}$, then $Q_N(t)$ is a symmetric coupling of $F_N(t)$ and $f(t)^{\otimes N}$.
\vspace{0.1cm}
\item
The time-dependent probability measure $Q_N(t)$ on $(\mathbf S\mathfrak H)^N\!\times\!C(\mathbb T^d)^N\!\times\!(\mathbf S\mathfrak H)^N\!\times\!C(\mathbb T^d)^N$ is a Lagrangian
weak solution of
\[
\begin{cases}
\displaystyle \d_tQ_N+ \sum_{k=1}^N\Div_{\phi_k}(Q_N(Y^f(t,\phi_k,V_k)+\tfrac12\kappa Y^i_k(t,\Phi_N))) \\
\displaystyle \hspace{0.5cm} +\sum_{k=1}^N\Div_{\tilde\phi_k}(Q_N(Y^f(t,\tilde\phi_k,\tilde V_k)+\tfrac12 \kappa Y^{mf}[\rho_f](t,\tilde\phi_k)))=0, \\
\displaystyle Q_N\rstr_{t=0}=Q^{0}_N.
\end{cases}
\]
\end{enumerate}
\end{Lem}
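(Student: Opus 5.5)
The proof will work directly with the push-forward structure of $\mathcal T_N(t,0)$, which is a product map. Write $p_1,p_2$ for the projections of $\big((\bS\fH)^N\times C(\bbt^d)^N\big)^2$ onto the first and second factors. The relation
\[
p_1\circ\mathcal T_N(t,0)=\mathcal S_N(t,0)\circ p_1
\]
holds by construction. The same construction gives
\[
p_2\circ\mathcal T_N(t,0)=\big(\mathfrak s(t,0)\times\mathrm{id}\big)^{\otimes N}\circ p_2,
\]
where $\mathfrak s(t,0)\times\mathrm{id}$ denotes the map $(\tilde\phi,\tilde V)\mapsto(\mathfrak s(t,0)\tilde\phi,\tilde V)$.

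For assertion (1), the first marginal is
\[
p_1\#Q_N(t)=\mathcal S_N(t,0)\#(p_1\#Q_N^0)=\mathcal S_N(t,0)\#F_N^0=F_N(t).
\]
For the second marginal one gets
\[
p_2\#Q_N(t)=\big(\mathfrak s(t,0)\times\mathrm{id}\big)^{\otimes N}\#(f^0)^{\otimes N}=\big((\mathfrak s(t,0)\times\mathrm{id})\#f^0\big)^{\otimes N}.
\]
The push-forward of a product measure by a product map is the product of the push-forwards, which one checks on cylinder sets. It then remains to identify $(\mathfrak s(t,0)\times\mathrm{id})\#f^0=f(t)$. This is precisely how the solution of \eqref{MFKinEq} is built in Theorem \ref{T-MFFlow}: the characteristic $\phi(t|\cdot)$ coincides with $\mathfrak s(t,0)$, and $\rho_f(t)=\phi(t|\cdot)\#\rho_{f^0}$.

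One point needs care, because $Y^f$ depends on $V$, so $\mathfrak s$ should carry $\tilde V$ as a parameter. I will read $\mathfrak s(t,t_0)$ as the flow of $(\tilde\phi,\tilde V)$ with $\tilde V$ frozen. Then $f(t)$ is \emph{defined} as the push-forward of $f^0$ by this flow, and it is the Lagrangian solution of \eqref{MFKinEq}. This identification is where I expect the only genuine subtlety: the mean-field field $Y^{mf}[\rho_f]$ is fixed once $f$ is known, so $\mathfrak s$ is a linear (non-self-consistent) flow, and consistency holds by the uniqueness in Theorem \ref{T-MFFlow}.

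For assertion (2), let $\sigma\in\fS_N$ act diagonally on both factors by $T_\sigma\times T_\sigma$. By \eqref{New-1} we have $T_\sigma\circ\mathcal S_N=\mathcal S_N\circ T_\sigma$. The map $(\mathfrak s\times\mathrm{id})^{\otimes N}$ applies the same map to each component, so it commutes with $T_\sigma$ trivially. Hence $(T_\sigma\times T_\sigma)\circ\mathcal T_N(t,0)=\mathcal T_N(t,0)\circ(T_\sigma\times T_\sigma)$. Then $(T_\sigma\times T_\sigma)\#Q_N(t)=Q_N(t)$ follows from the symmetry of $Q_N^0$, by the same computation as the one giving $T_\sigma\#F_N(t)=F_N(t)$ in Section \ref{sec:3.2}.

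For assertion (3), observe that $\mathcal T_N(t,0)$ is the flow of the time-dependent vector field
\[
\big(Y^f(t,\phi_k,V_k)+\tfrac12\kappa Y^i_k(t,\Phi_N)\big)_k\oplus 0_N\oplus\big(Y^f(t,\tilde\phi_k,\tilde V_k)+\tfrac12\kappa Y^{mf}[\rho_f](t,\tilde\phi_k)\big)_k\oplus 0_N
\]
on the doubled phase space. Given $f$, this field is continuous in $t$ and Lipschitz on the product of spheres; the Lipschitz bounds are those of Section \ref{sec:2.3} together with \eqref{LipPphi}. It is also tangent to $(\bS\fH)^{2N}$. Applying Lemma \ref{L-B.1} on this product manifold then shows that the push-forward $Q_N(t)$ is a Lagrangian weak solution of the stated Liouville equation. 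Concretely, one differentiates $\int\chi\circ\mathcal T_N(t,0)\,dQ_N^0$ for $\chi\in C^1_b$ by the chain rule and applies dominated convergence, which is justified by the bounded vector field.
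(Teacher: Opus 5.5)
Your proposal is correct and follows the paper's proof essentially step by step: the marginals are identified through the product structure of $\mathcal T_N(t,0)$ together with the identification of $(\tilde\phi,\tilde V)\mapsto(\mathfrak s(t,0)\tilde\phi,\tilde V)$ with the mean-field characteristic flow; symmetry comes from the diagonal action of $T_\sigma$ combined with \eqref{New-1}; and the Liouville equation follows from Lemma \ref{L-B.1}. Reading $\mathfrak s$ as carrying $\tilde V$ as a frozen parameter is exactly what the paper does implicitly, and the only caveat, which the paper shares, is that the vector field is not globally bounded: it is bounded only where $\|V_k\|_{L^\infty}$ and $\|\tilde V_k\|_{L^\infty}$ are bounded, so the dominated-convergence step needs a moment bound on these quantities rather than a global bound on the field.
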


\begin{proof} 
(i)~Let $\chi\in C_b((\mathbf S\mathfrak H)^N\times C(\mathbb T^d)^N)$; then we have, with the notation \eqref{NotPhiNVN},
\begin{align*}
\begin{aligned}
& \int_{(\mathbf S\mathfrak H)^N\times C(\mathbb T^d)^N}\chi(\Phi_N,\mathcal V_N)Q_N(t,d\Phi_Nd\mathcal V_Nd\tilde\Phi_Nd\tilde{\mathcal V}_N)
\\
&  \hspace{0.5cm} =\!\int_{(\mathbf S\mathfrak H)^N\times C(\mathbb T^d)^N}\!\chi(\mathcal S_N(t,0)\!\cdot\!(\Phi_N,\mathcal V_N))Q^0_N(d\Phi_Nd\mathcal V_Nd\tilde\Phi_Nd\tilde{\mathcal V}_N)
\\
&\hspace{0.5cm} =\!\int_{(\mathbf S\mathfrak H)^N\times C(\mathbb T^d)^N}\!\chi(\mathcal S_N(t,0)\!\cdot\!(\Phi_N,\mathcal V_N))F_N^0(d\Phi_Nd\mathcal V_N)
\\
&\hspace{0.5cm} =\int_{(\mathbf S\mathfrak H)^N\times C(\mathbb T^d)^N}\chi(\Phi_N,\mathcal V_N)F_N(t,d\Phi_Nd\mathcal V_N).
\end{aligned}
\end{align*}
Similarly, we have 
\begin{align*}
\begin{aligned}
& \int_{(\mathbf S\mathfrak H)^N\times C(\mathbb T^d)^N}\chi(\tilde\Phi_N,\tilde{\mathcal V_N})Q_N(t,d\Phi_Nd\mathcal V_Nd\tilde\Phi_Nd\tilde{\mathcal V}_N) 
\\
&  \hspace{0.5cm} =\int_{(\mathbf S\mathfrak H)^N\times C(\mathbb T^d)^N}\chi(\mathfrak s(t,0)\tilde\phi_1,\ldots,\mathfrak s(t,0)\tilde\phi_N,\mathcal V_N)
Q_N^0(d\Phi_Nd\mathcal V_Nd\tilde\Phi_Nd\tilde{\mathcal V}_N)
\\
& \hspace{0.5cm} =\int_{(\mathbf S\mathfrak H)^N\times C(\mathbb T^d)^N}\chi(\mathfrak s(t,0)\tilde\phi_1,\ldots,\mathfrak s(t,0)\tilde\phi_N,\tilde V_1,\ldots,\tilde V_N)f^0(d\tilde\phi_1d\tilde V_1)\ldots f^0(d\tilde\phi_Nd\tilde V_N)
\\
&  \hspace{0.5cm} =\int_{(\mathbf S\mathfrak H)^N\times C(\mathbb T^d)^N}\chi(\tilde\Phi_N,\tilde{\mathcal V}_N)f_N(t,d\tilde\phi_1d\tilde V_1)\ldots f_N(t,d\tilde\phi_Nd\tilde V_N).
\end{aligned}
\end{align*}
These identities imply the first assertion.  \newline

\noindent (ii)~For each permutation $\sigma$ in the symmetric group $\mathfrak S_N$, we set 
\begin{align*}
\begin{aligned}
& \mathbf T_\sigma(\phi_1,\ldots,\phi_N,V_1,\ldots,V_N;\tilde\phi_1,\ldots,\tilde\phi_N,\tilde V_1,\ldots,\tilde V_N)
\\
& \hspace{1cm}  :=(T_\sigma(\phi_1,\ldots,\phi_N,V_1,\ldots,V_N);T_\sigma(\tilde\phi_1,\ldots,\tilde\phi_N,\tilde V_1,\ldots,\tilde V_N))
\\
& \hspace{1.1cm}  =(\phi_{\sigma(1)},\ldots,\phi_{\sigma(N)},V_{\sigma(1)},\ldots,V_{\sigma(N)};\tilde\phi_{\sigma(1)},\ldots,\tilde\phi_{\sigma(N)},\tilde V_{\sigma(1)},\ldots,\tilde V_{\sigma(N)}).
\end{aligned}
\end{align*}
Then, for each $\chi\in C_b((\mathbf S\mathfrak H)^N\times C(\mathbb T^d))$, each $t\ge 0$ and each $\sigma\in\mathfrak S_N$, one has
\begin{align*}
\begin{aligned}
& \int_{(\mathbf S\mathfrak H)^N\times C(\mathbb T^d)^N}\chi(\Phi_N,\mathcal V_N,\tilde\Phi_N,\tilde{\mathcal V}_N)\mathbf T_\sigma \#Q_N(t,d\Phi_Nd\mathcal V_Nd\tilde\Phi_Nd\tilde{\mathcal V}_N)
\\
&\hspace{0.2cm} =\int_{(\mathbf S\mathfrak H)^N\times C(\mathbb T^d)^N}\chi(\mathcal S_N(t,0)\circ T_\sigma(\Phi_N,\mathcal V_N),
\mathfrak s(t,0)\tilde\phi_{\sigma(1)},\ldots,\mathfrak s(t,0)\tilde\phi_{\sigma(N)},\tilde V_{\sigma(1)},\ldots,\tilde V_{\sigma(N)})
\\
& \hspace{3cm} \times Q_N^0(d\Phi_Nd\mathcal V_Nd\tilde\Phi_Nd\tilde{\mathcal V}_N)
\\
&\hspace{0.2cm} =\int_{(\mathbf S\mathfrak H)^N\times C(\mathbb T^d)^N}\chi(T_\sigma\circ\mathcal S_N(t,0)(\Phi_N,\mathcal V_N),
\mathfrak s(t,0)\tilde\phi_{\sigma(1)},\ldots,\mathfrak s(t,0)\tilde\phi_{\sigma(N)},\tilde V_{\sigma(1)},\ldots,\tilde V_{\sigma(N)})
\\
& \hspace{3cm} \times Q_N^0(d\Phi_Nd\mathcal V_Nd\tilde\Phi_Nd\tilde{\mathcal V}_N)
\\
& \hspace{0.2cm} =\int_{(\mathbf S\mathfrak H)^N\times C(\mathbb T^d)^N}\chi(\mathcal S_N(t,0)(\Phi_N,\mathcal V_N),\mathfrak s(t,0)\tilde\phi_1,\ldots,\mathfrak s(t,0)\tilde\phi_N,\tilde{\mathcal V}_N)
Q_N^0(d\Phi_Nd\mathcal V_Nd\tilde\Phi_Nd\tilde{\mathcal V}_N)
\\
& \hspace{0.2cm}=\int_{(\mathbf S\mathfrak H)^N\times C(\mathbb T^d)^N}\chi(\Phi_N,\mathcal V_N;\tilde\phi_N,\tilde{\mathcal V}_N)Q_N(t,d\Phi_Nd\mathcal V_Nd\tilde\Phi_Nd\tilde{\mathcal V}_N)\,.
\end{aligned}
\end{align*}
The second equality follows from \eqref{New-1}, while the third follows from the fact that $Q_N^0$ is a symmetric coupling of $F_N^0$ and $(f^0)^{\otimes N}$. This chain of equalities proves that
\[
\mathbf T_\sigma \#Q_N(t)=Q_N(t)\,,\quad\text{ for each }\sigma\in\mathfrak S_N.
\]
This is precisely the second assertion. \newline

\noindent (ii)~Finally, the Liouville equation in (3) follows from the definitions \eqref{Def-SN(tt0)} of $\mathcal S_N(t,0)$, and \eqref{Def-s(tt0)} of $\mathfrak s(t,0)$, and from Lemma \ref{L-B.1}  in Appendix \ref{SLiouv}.
\end{proof}

Before we estimate the 2-Wasserstein distance between one-marginal distribution and one-oscillator distribution, we provide a bound for the functional $D_N$ defined by the following formula: for two solutions 
$\{ (\phi_k, V_k) \}$  and $\{ ({\tilde \phi}_k, {\tilde V}_k) \}$ of \eqref{C-1}, 
\begin{equation*} \label{D-0}
D_N(t):=\frac1{2N}\sum_{k=1}^N\int_{\Lambda_N\times\Lambda_N}\Big(\|\phi_k-\tilde\phi_k\|^2_\fH+\|V_k-\tilde V_k\|^2_{L^\infty} \Big)Q_N(t,d\Phi_Nd\cV_Nd\tilde\Phi_Nd\tilde\cV_N)
\end{equation*}
for each $t\ge 0$. Here, $\Lambda_N$ denotes the  $N$-oscillator phase space:
\[ 
\Lambda_N := (\mathbf S\mathfrak H)^N\times C(\mathbb T^d)^N. \
\]
We shall henceforth assume that
\[
\int_{\Lambda_N\times\Lambda_N}\Big(\|V_k\|^2_{L^\infty}+\|\tilde V_k\|^2_{L^\infty} \Big )Q^{0}_N(d\Phi_Nd\cV_Nd\tilde\Phi_Nd\tilde\cV_N)<\infty\,.
\]
In the next subsection, we will derive a differential inequality for $D_N$.

\subsection{Evolution of $D_N(t)$} 

By definition of the divergence, we have
\begin{align}
\begin{aligned} \label{D-1}
\dot D_N(t)&\!=\!\tfrac1N\sum_{k=1}^N\int_{\Lambda_N\times\Lambda_N}\RE \Big \la(Y^f(t,\phi_k,V_k)\!-\!Y^f(t,\tilde\phi_k,\tilde V_k)) \Big |\phi_k\!-\!\tilde\phi_k \Big \ra Q_N(t,d\Phi_Nd\cV_Nd\tilde\Phi_Nd\tilde\cV_N) 
\\
&+\!\tfrac{\kappa}{2N}\sum_{k=1}^N\int_{\Lambda_N\times\Lambda_N}\RE \Big \la(Y^i_k(t,\Phi_N)\!-\!Y^{mf}[\rho_f](t,\tilde\phi_k)) \Big |\phi_k\!-\!\tilde\phi_k \Big \ra Q_N(t, d\Phi_Nd\cV_Nd\tilde\Phi_Nd\tilde\cV_N) \\
&=: {\mathcal I}_{21} + \frac{\kappa}{2} {\mathcal I}_{22}.
\end{aligned}
\end{align}
In the following lemma, we estimate the terms ${\mathcal I}_{2i}$ one by one.
\begin{Lem} \label{L4.2}
Let  $\{ (\phi_k, V_k) \}$ and $\{ ({\tilde \phi}_k, {\tilde V}_k) \}$ be solutions to \eqref{C-1}. Then, one has
\begin{eqnarray*}
&& (i)~|{\mathcal I}_{21} | \leq \frac{L_1}{2N} \sum_{k=1}^N\int_{\Lambda_N\times\Lambda_N}\Big (3\|\phi_k-\tilde\phi_k\|^2_\fH+\|V_k-\tilde V_k\|^2_{L^{\infty}} \Big)  Q_N(t,d\Phi_Nd\cV_Nd\tilde\Phi_Nd\tilde\cV_N). \cr
&& (ii)~|{\mathcal I}_{22} | \leq \frac7{2N}\sum_{k=1}^N \int_{\Lambda_N\times\Lambda_N}\|\phi_k-\tilde\phi_k\|^2_\fH Q_N(t,d\Phi_Nd\cV_Nd\tilde\Phi_Nd\tilde\cV_N)  \\
&& \hspace{1.5cm} + \tfrac12 \int_{\Lambda_N\times\Lambda_N}\left\|\frac1N\sum_{l=1}^N\left(\tilde\phi_l-\int_{\bS\fH}\tilde\phi\rho_f(t,d\tilde\phi)\right)\right\|^2_\fH Q_N(t,d\Phi_Nd\cV_Nd\tilde\Phi_Nd\tilde\cV_N),
\end{eqnarray*}
where $L_1$ is a positive constant defined by the following relation:
\begin{equation} \label{D-1-1}
L_1:=\tfrac1\hbar\max\left \{ \max_{1\le k\le N}\|V_k\|_{L^\infty}+3\|\om\|_{L^\infty},1\right \}.  
\end{equation}
\end{Lem}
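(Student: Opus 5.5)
Both bounds will be proved pointwise, for fixed $(\Phi_N,\cV_N,\tilde\Phi_N,\tilde\cV_N)$, and then integrated against $Q_N(t)$. Throughout we use $\|\phi_k\|_\fH=\|\tilde\phi_k\|_\fH=1$, the unitarity of $U(\hbar t)$, and Young's inequality $ab\le\frac12(a^2+b^2)$.

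\emph{Part (i).} We split
\[
Y^f(t,\phi_k,V_k)-Y^f(t,\tilde\phi_k,\tilde V_k)=\big(Y^f(t,\phi_k,V_k)-Y^f(t,\tilde\phi_k,V_k)\big)+\big(Y^f(t,\tilde\phi_k,V_k)-Y^f(t,\tilde\phi_k,\tilde V_k)\big).
\]
Write $u=U(\hbar t)\phi_k$ and $\tilde u=U(\hbar t)\tilde\phi_k$. The first difference equals
\[
\tfrac1{i\hbar}U^*\big[(V_k+\om\star|u|^2)(u-\tilde u)+(\om\star(|u|^2-|\tilde u|^2))\tilde u\big].
\]
Since $\|\om\star g\|_{L^\infty}\le\|\om\|_{L^\infty}\|g\|_{L^1}$, the $L^1$ bound $\||u|^2-|\tilde u|^2\|_{L^1}\le 2\|u-\tilde u\|_\fH$ from the proof of Theorem \ref{T-MFFlow} gives the estimate $\tfrac1\hbar(\|V_k\|_{L^\infty}+3\|\om\|_{L^\infty})\|\phi_k-\tilde\phi_k\|_\fH\le L_1\|\phi_k-\tilde\phi_k\|_\fH$. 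The second difference is $\tfrac1{i\hbar}U^*(V_k-\tilde V_k)\tilde u$, whose norm is at most $\tfrac1\hbar\|V_k-\tilde V_k\|_{L^\infty}\le L_1\|V_k-\tilde V_k\|_{L^\infty}$; here the $\max\{\cdot,1\}$ in \eqref{D-1-1} is used. By Cauchy–Schwarz and Young, the real part of the inner product with $\phi_k-\tilde\phi_k$ is bounded by
\[
L_1\|\phi_k-\tilde\phi_k\|^2_\fH+\tfrac{L_1}2\big(\|\phi_k-\tilde\phi_k\|^2_\fH+\|V_k-\tilde V_k\|^2_{L^\infty}\big)=\tfrac{L_1}2\big(3\|\phi_k-\tilde\phi_k\|^2_\fH+\|V_k-\tilde V_k\|^2_{L^\infty}\big).
\]
Averaging over $k$ and integrating gives (i). One caveat: $L_1$ uses $\max_k\|V_k\|_{L^\infty}$, so this step tacitly requires the $V_k$ to be bounded on the support of $Q_N$. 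We would state this as an implicit assumption, as the paper does.

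\emph{Part (ii).} Set $\phi_c=\frac1N\sum_l\phi_l$, $\tilde\phi_c=\frac1N\sum_l\tilde\phi_l$ and $m=\int\tilde\phi\,\rho_f(t,d\tilde\phi)$, all of norm at most $1$. Then
\[
Y^i_k-Y^{mf}[\rho_f](t,\tilde\phi_k)=(I-\bP_{\phi_k})(\phi_c-\tilde\phi_c)+(\bP_{\tilde\phi_k}-\bP_{\phi_k})\tilde\phi_c+(I-\bP_{\tilde\phi_k})(\tilde\phi_c-m).
\]
We bound the three pieces as follows.
\begin{itemize}
\item First piece: $\|I-\bP_{\phi_k}\|\le1$, so it is at most $\|\phi_c-\tilde\phi_c\|_\fH\le\frac1N\sum_l\|\phi_l-\tilde\phi_l\|_\fH$.
\item Second piece: by \eqref{LipPphi} it is at most $2\|\phi_k-\tilde\phi_k\|_\fH$.
\item Third piece: it is at most $\|\tilde\phi_c-m\|_\fH$.
\end{itemize}
Pairing with $\phi_k-\tilde\phi_k$ and applying Young's inequality gives the pointwise bound
\[
\tfrac12\Big(\tfrac1N\sum_l\|\phi_l-\tilde\phi_l\|_\fH\Big)^2+\tfrac12\|\phi_k-\tilde\phi_k\|^2_\fH+2\|\phi_k-\tilde\phi_k\|^2_\fH+\tfrac12\|\tilde\phi_c-m\|^2_\fH+\tfrac12\|\phi_k-\tilde\phi_k\|^2_\fH.
\]
By Jensen, $(\frac1N\sum_l a_l)^2\le\frac1N\sum_l a_l^2$. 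Averaging over $k$ then yields the coefficient $\frac12+\frac12+2+\frac12=\frac72$ for $\frac1N\sum_k\|\phi_k-\tilde\phi_k\|^2_\fH$, plus $\frac12\|\tilde\phi_c-m\|^2_\fH$, which is exactly (ii). The quantity $\|\tilde\phi_c-m\|^2_\fH$ is the law-of-large-numbers term that will later be of order $1/N$.

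The main point requiring care is this three-way decomposition in (ii): $\tilde\phi_c$, not $\phi_c$, must be the quantity compared to $m$, so that the fluctuation term involves only the i.i.d. $f(t)$-distributed $\tilde\phi_l$. The constants work out exactly, and the rest is bookkeeping.
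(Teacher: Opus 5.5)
Your proof is correct and follows the paper's argument. Part (i) uses the same splitting of $Y^f(t,\phi_k,V_k)-Y^f(t,\tilde\phi_k,\tilde V_k)$, the $L^1$ bound on $|u|^2-|\tilde u|^2$ and Young's inequality. Part (ii) uses a three-term decomposition of $Y^i_k-Y^{mf}[\rho_f](t,\tilde\phi_k)$ that differs only cosmetically: the paper lets the projector difference act on $\frac1N\sum_l\phi_l$ with $I-\bP_{\tilde\phi_k}$ in front of $\frac1N\sum_l(\phi_l-\tilde\phi_l)$, and it handles the cross term by pairwise Young on $\frac1{N^2}\sum_{k,l}$ rather than by Jensen, which yields the same constant $\frac72$.
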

\begin{proof}
\noindent (i)~First, we note that 
\begin{align*}
\begin{aligned}
& Y^f(t,\phi_k,V_k)-Y^f(t,\tilde\phi_k,\tilde V_k) \\
& \hspace{0.5cm} = \tfrac1{{\mathrm i}\hbar}U(\hbar t)^*(V_k+\om\star_x|U(\hbar t)\phi_k|^2)U(\hbar t)(\phi_k-\tilde\phi_k)
+ \tfrac1{{\mathrm i}\hbar}U(\hbar t)^*(V_k-\tilde V_k)U(\hbar t)\tilde\phi_k
\\
& \hspace{0.7cm} + \tfrac1{{\mathrm i}\hbar}U(\hbar t)^* \Big(\om\star_x(|U(\hbar t)\phi_k|^2-|U(\hbar t)\tilde\phi_k|^2) \Big)U(\hbar t)\tilde\phi_k.
\end{aligned}
\end{align*}
This yields
\begin{align}
\begin{aligned} \label{D-2}
& \|Y^f(t,\phi_k,V_k)-Y^f(t,\tilde\phi_k,\tilde V_k)\|_\fH \\
& \hspace{0.8cm} \le
\tfrac1{\hbar}\left(\|V_k\|_{L^\infty}+\|\om\|_{L^\infty}\right)\|\phi_k-\tilde\phi_k\|_\fH
+ \tfrac1\hbar\|V_k-\tilde V_k\|_{L^\infty}  \\
&\hspace{1cm} +
\|\om\star_x(|U(\hbar t)\phi_k|^2-|U(\hbar t)\tilde\phi_k|^2)\|_{L^\infty}  \\
& \hspace{0.8cm} \le \tfrac1\hbar\left(\max_{1\le k\le N}\|V_k\|_{L^\infty}+\|\om\|_{L^\infty}\right)\|\phi_k-\tilde\phi_k\|_\fH+\tfrac1\hbar\|V_k-\tilde V_k\|_{L^\infty}  \\
& \hspace{1cm}+\tfrac1\hbar\|\om\|_{L^\infty}\||U(\hbar t)\phi_k|^2-|U(\hbar t)\tilde\phi_k|^2\|_{L^1}.
\end{aligned}
\end{align}
Now, we use  the identity $|a|^2-|b|^2=(\bar a-\bar b)a+\bar b(a-b)$ and the Cauchy-Schwarz inequality to see that 
\begin{equation} \label{D-3}
\||U(\hbar t)\phi_k|^2-|U(\hbar t)\tilde\phi_k|^2\|_{L^1}\le 2\|\phi_k-\tilde\phi_k\|_\fH.
\end{equation}
Then, we combine \eqref{D-2} and \eqref{D-3} to get 
\begin{align}
\begin{aligned} \label{D-3-1}
& \|Y^f(t,\phi_k,V_k)-Y^f(t,\tilde\phi_k,\tilde V_k)\|_\fH \\
& \hspace{1cm} \le\tfrac1\hbar\left(\max_{1\le k\le N}\|V_k\|_{L^\infty}+3\|\om\|_{L^\infty}\right)\|\phi_k-\tilde\phi_k\|_\fH+\tfrac1\hbar\|V_k-\tilde V_k\|_{L^\infty} \\
& \hspace{1cm} \leq L_1 (\|\phi_k-\tilde\phi_k\|_\fH+\|V_k-\tilde V_k\|_{L^{\infty}})\,,
\end{aligned}
\end{align}
where $L_1$ is the positive constant defined in \eqref{D-1-1}. \newline

\noindent Finally, in \eqref{D-1}, we use \eqref{D-3-1} and the trivial inequality $|ab| \leq \frac{a^2}{2} + \frac{b^2}{2}$ to obtain
\begin{align*}
\begin{aligned}
|{\mathcal I}_{21} | &\le\frac{L_1}N\sum_{k=1}^N\int_{\Lambda_N\times\Lambda_N}(\|\phi_k-\tilde\phi_k\|_\fH+\|V_k-\tilde V_k\|_{L^{\infty}})\|\phi_k-\tilde\phi_k\|_\fH Q_N(t,d\Phi_Nd\cV_Nd\tilde\Phi_Nd\tilde\cV_N) \\
&\leq \frac{L_1}{2N} \sum_{k=1}^N  \int_{\Lambda_N\times\Lambda_N}\Big(3 \|\phi_k-\tilde\phi_k\|_\fH^2 + |V_k-\tilde V_k\|_{L^{\infty}}^2\Big) Q_N(t,d\Phi_Nd\cV_Nd\tilde\Phi_Nd\tilde\cV_N).
\end{aligned}
\end{align*}

\vspace{0.2cm}

\noindent (ii)~For the interaction part, we observe that
\begin{align}
\begin{aligned} \label{D-3-2}
& Y^i_k(t,\Phi_N)-Y^{mf}[\rho_f](t,\tilde\phi_k)  \\
& \hspace{1cm} =(I-\bP_{\phi_k})\frac1N\sum_{l=1}^N\phi_l-(I-\bP_{\tilde\phi_k})\int_{\bS\fH}\tilde\phi\rho_f(t,d\tilde\phi) \\
& \hspace{1cm} =(\bP_{\tilde\phi_k}-\bP_{\phi_k})\frac1N\sum_{l=1}^N\phi_l+(I-\bP_{\tilde\phi_k})\frac1N\sum_{l=1}^N(\phi_l-\tilde\phi_l) \\
& \hspace{1.2cm} +(I-\bP_{\tilde\phi_k})\frac1N\sum_{l=1}^N\left(\tilde\phi_l-\int_{\bS\fH}\tilde\phi\rho_f(t,d\tilde\phi)\right).
\end{aligned}
\end{align}
Then in \eqref{D-3-2}, we use the elementary computations:
\[
\| (I - \bP_{\tilde\phi_k}) \xi \|_{\fH} \leq \|\xi \|_{\fH} \quad \mbox{and} \quad  \|(\bP_{\tilde\phi}-\bP_{\phi})\xi\|_\fH=\|\la\tilde\phi-\phi|\xi\ra\tilde\phi+\la\phi|\xi\ra(\tilde\phi-\phi)\|_\fH\le 2\|\tilde\phi-\phi\|_\fH\|\xi\|_\fH\,,
\]
together with the Cauchy-Schwarz inequality to see that
\begin{align*}
\begin{aligned}  
&\Big | \Big \la(Y^i_k(t,\Phi_N)-Y^{mf}[\rho_f](t,\tilde\phi_k))|\phi_k-\tilde\phi_k \Big \ra \Big|  \\
& \hspace{0.5cm} \le 2\|\phi_k-\tilde\phi_k\|^2_\fH\left\|\frac1N\sum_{l=1}^N\phi_l\right\|_\fH  +\frac1N\sum_{l=1}^N\|\phi_l-\tilde\phi_l\|_\fH\|\phi_k-\tilde\phi_k\|_\fH \\
& \hspace{0.7cm} +\left\|\frac1N\sum_{l=1}^N\left(\tilde\phi_l-\int_{\bS\fH}\tilde\phi\rho_f(t,d\tilde\phi)\right)\right\|_\fH\|\phi_k-\tilde\phi_k\|_\fH\,.
\end{aligned}
\end{align*}
Therefore, we have 
\begin{align*}
\begin{aligned} \label{D-3-3}
& \left|\frac1N\sum_{k=1}^N \Big \la(Y^i_k(t,\Phi_N)-Y^{mf}[\rho_f](t,\tilde\phi_k)) \Big | \phi_k-\tilde\phi_k \Big \ra\right| \\
& \hspace{1cm} \le\frac2N\sum_{k=1}^N\|\phi_k-\tilde\phi_k\|^2_\fH+\frac1{N^2}\sum_{k,l=1}^N\|\phi_l-\tilde\phi_l\|_\fH\|\phi_k-\tilde\phi_k\|_\fH \\
& \hspace{1cm} +\tfrac12\left\|\frac1N\sum_{l=1}^N\left(\tilde\phi_l-\int_{\bS\fH}\tilde\phi\rho_f(t,d\tilde\phi)\right)\right\|^2_\fH+\frac1{2N}\sum_{k=1}^N\|\phi_k-\tilde\phi_k\|^2_\fH \\
& \hspace{1cm} \le\frac5{2N}\sum_{k=1}^N\|\phi_k-\tilde\phi_k\|^2_\fH+\frac1{N^2}\sum_{k,l=1}^N\tfrac12 \Big (\|\phi_l-\tilde\phi_l\|^2_\fH+\|\phi_k-\tilde\phi_k\|^2_\fH \Big ) \\
& \hspace{1cm} +\tfrac12\left\|\frac1N\sum_{l=1}^N\left(\tilde\phi_l-\int_{\bS\fH}\tilde\phi\rho_f(t,d\tilde\phi)\right)\right\|^2_\fH
\\
& \hspace{1cm} \le\frac7{2N}\sum_{k=1}^N\|\phi_k-\tilde\phi_k\|^2_\fH+\tfrac12\left\|\frac1N\sum_{l=1}^N\left(\tilde\phi_l-\int_{\bS\fH}\tilde\phi\rho_f(t,d\tilde\phi)\right)\right\|^2_\fH,
\end{aligned}
\end{align*}
where we have used the bound:
\[  \left\|\frac1N\sum_{l=1}^N\phi_l\right\|_\fH \leq \frac{1}{N} \sum_{l=1}^{N} \|\phi_l \|_{\fH} = 1.  \]
This yields the desired estimate:
\begin{align*}
\begin{aligned}
|{\mathcal I}_{22} | &\leq \frac7{2N}\sum_{k=1}^N \int_{\Lambda_N\times\Lambda_N}\|\phi_k-\tilde\phi_k\|^2_\fH Q_N(t,d\Phi_Nd\cV_Nd\tilde\Phi_Nd\tilde\cV_N)  \\
&\hspace{0.2cm} + \tfrac12 \int_{\Lambda_N\times\Lambda_N}\left\|\frac1N\sum_{l=1}^N\left(\tilde\phi_l-\int_{\bS\fH}\tilde\phi\rho_f(t,d\tilde\phi)\right)\right\|^2_\fH Q_N(t,d\Phi_Nd\cV_Nd\tilde\Phi_Nd\tilde\cV_N).
\end{aligned}
\end{align*}
\end{proof}
Finally, we combine \eqref{D-1} and estimate in Lemma \ref{L4.3} to get 
\begin{align}
\begin{aligned} \label{D-3-4}
&\dot{D}_N(t) \le |{\mathcal I}_{21} | + \frac{\kappa}{2} |{\mathcal I}_{22} | \\
&\leq \frac{L_1}{2N} \sum_{k=1}^N\int_{\Lambda_N\times\Lambda_N}\Big (3\|\phi_k-\tilde\phi_k\|^2_\fH+\|V_k-\tilde V_k\|^2_{L^{\infty}} \Big)  Q_N(t,d\Phi_Nd\cV_Nd\tilde\Phi_Nd\tilde\cV_N) 
\\
& \hspace{0.2cm} +  \frac{7\kappa}{4N}\sum_{k=1}^N \int_{\Lambda_N\times\Lambda_N}\|\phi_k-\tilde\phi_k\|^2_\fH Q_N(t,d\Phi_Nd\cV_Nd\tilde\Phi_Nd\tilde\cV_N)  
\\
& \hspace{0.2cm} + \frac{\kappa}{4}\int_{\Lambda_N\times\Lambda_N}\left\|\frac1N\sum_{l=1}^N\left(\tilde\phi_l-\int_{\bS\fH}\tilde\phi\rho_f(t,d\tilde\phi)\right)\right\|^2_\fH Q_N(t,d\Phi_Nd\cV_Nd\tilde\Phi_Nd\tilde\cV_N) 
\\
&\leq \Big(3L_1+\frac{7\kappa}{2} \Big)D_N(t) +\tfrac14\kappa \int_{(\bS\fH)^N}\left\|\frac1N\sum_{l=1}^N\left(\tilde\phi_l-\int_{\bS\fH}\tilde\phi\rho_f(t,d\tilde\phi)\right)\right\|^2_\fH\rho^{\otimes N}_f(t,d\tilde\Phi_N)\,.
\end{aligned}
\end{align}
Note that the last integral on the right-hand side above involves the measure $\rho^{\otimes N}_f(t,d\tilde\Phi_N)$ (see Lemma \ref{L4.1} (1)).

\subsection{The poor man's law of large numbers}

In order to derive Gronwall's inequality from \eqref{D-3-4}, we need to estimate the term
\begin{align*}
\begin{aligned}
& \int_{(\bS\fH)^N}\left\|\frac1N\sum_{l=1}^N\left(\tilde\phi_l-\int_{\bS\fH}\tilde\phi\rho_f(t,d\tilde\phi)\right)\right\|^2_\fH\rho^{\otimes N}_f(t,d\tilde\Phi_N) \\
& \hspace{1cm} = \frac1{N^2}\sum_{k,l=1}^N\int_{(\bS\fH)^N}\La\tilde\phi_k\!-\!\int_{\bS\fH}\tilde\phi\rho_f(t,d\tilde\phi)\bigg|\tilde\phi_l\!-\!\int_{\bS\fH}\tilde\phi\rho_f(t,d\tilde\phi)\Ra\prod_{m=1}^N\rho_f(t,d\tilde\phi_m).
\end{aligned}
\end{align*}
Note that for $k\not=l$, we have
\begin{align*}
\begin{aligned}
& \int_{(\bS\fH)^N}\La\tilde\phi_k-\int_{\bS\fH}\tilde\phi\rho_f(t,d\tilde\phi)\bigg|\tilde\phi_l-\int_{\bS\fH}\tilde\phi\rho_f(t,d\tilde\phi)\Ra\prod_{m=1}^N\rho_f(t,d\tilde\phi_m) \\
& \hspace{0.5cm} =\!\La\int_{\bS\fH}\!\left(\tilde\phi_k\!-\!\!\int_{\bS\fH}\tilde\phi\rho_f(t,d\tilde\phi)\right)\rho_f(t,d\tilde\phi_k)\bigg|\int_{\bS\fH}\!\left(\tilde\phi_l\!-\!\!\int_{\bS\fH}\tilde\phi\rho_f(t,d\tilde\phi)\right)\rho_f(t,d\tilde\phi_l)\Ra\!=\!0.
\end{aligned}
\end{align*}
Therefore, only the terms with $k=l$ remain in 
\[
\int_{(\bS\fH)^N}\left\|\frac1N\sum_{l=1}^N\left(\tilde\phi_l-\int_{\bS\fH}\tilde\phi\rho_f(t,d\tilde\phi)\right)\right\|^2_\fH\rho^{\otimes N}_f(t,d\tilde\Phi_N)\,,
\]
and there are exactly $N$ such terms. Thus, one has
\begin{align}
\begin{aligned} \label{D-3-5}
& \int_{(\bS\fH)^N}\left\|\frac1N\sum_{l=1}^N\left(\tilde\phi_l-\int_{\bS\fH}\tilde\phi\rho_f(t,d\tilde\phi)\right)\right\|^2_\fH\rho^{\otimes N}_f(t,d\tilde\Phi_N) \\
& \hspace{1cm} \le\frac1{N^2}\sum_{k=1}^N\int_{(\bS\fH)^N}\left\|\tilde\phi_k-\int_{\bS\fH}\tilde\phi\rho_f(t,d\tilde\phi)\right\|^2_\fH\rho^{\otimes N}_f(t,d\tilde\Phi_N) \\
& \hspace{1cm} \le\frac1{N^2}\sum_{k=1}^N\int_{\bS\fH}\left\|\tilde\phi-\int_{\bS\fH}\tilde\phi\rho_f(t,d\tilde\phi)\right\|^2_\fH\rho_f(t,d\tilde\phi)
\\
& \hspace{1cm} \le\frac1N\int_{\bS\fH}\left\|\tilde\phi-\int_{\bS\fH}\tilde\phi\rho_f(t,d\tilde\phi)\right\|^2_\fH\rho_f(t,d\tilde\phi)\le\frac4N\,.
\end{aligned}
\end{align}
We have used the following relation in the last inequality:
\[
\left\|\tilde\phi-\int_{\bS\fH}\tilde\phi\rho_f(t,d\tilde\phi)\right\|_{\fH} \le\|\tilde\phi\|_\fH+\left\|\int_{\bS\fH}\tilde\phi\rho_f(t,d\tilde\phi)\right\|_{\fH}
\le 1+\int_{\bS\fH}\|\tilde\phi\|\rho_f(t,d\tilde\phi)=2\,.
\]
Finally, we combine \eqref{D-3-4} and \eqref{D-3-5} to derive Gronwall's inequality for $D_N$:
\[
\dot{D}_N(t)\le \Big( 3L_1+\frac{7\kappa}{2}  \Big) D_N(t)+\frac{\kappa}{N} =: L_2 D_N(t) + \frac{\kappa}{N}. 
\]
This yields the desired quantitative estimate for $D_N$:
\begin{equation} \label{D-5}
D_N(t)\le D_N(0)\exp(L_2t)+\frac{\kappa}{N}\frac{e^{L_2t}-1}{L_2}\,,\quad t\ge 0\,.
\end{equation}
\subsection{Using the $N$-particle symmetry} \label{sec:4.4}
In this subsection, we present our first main result. In the next lemma, we show that $W_2(F_{N:m}(t),f(t)^{\otimes m})$ can be controlled by $D_N(t)$. 
\begin{Lem} \label{L4.3}
Suppose that $Q^{0}_N$ is a symmetric coupling of $F^{0}_N$ and of $(f^{0})^{\otimes N}$. Then we have
\[ 
W_2(F_{N:m}(t),f(t)^{\otimes m})^2 \leq 2mD_N(t),\quad t\ge 0\,,
\]
for each $m \in [N]$.
\end{Lem}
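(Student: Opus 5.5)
The plan is to exhibit an explicit coupling of $F_{N:m}(t)$ and $f(t)^{\otimes m}$ built from $Q_N(t)$, bound its transport cost, and then use the symmetry of $Q_N(t)$ to compare that cost with $D_N(t)$. The distance on $\bS\fH\times C(\bbt^d)$ is taken as $\big(\|\phi-\tilde\phi\|_\fH^2+\|V-\tilde V\|_{L^\infty}^2\big)^{1/2}$. On the $m$-fold product we use the corresponding $\ell^2$-sum, so the squared distance between $(\Phi_m,\cV_m)$ and $(\tilde\Phi_m,\tilde\cV_m)$ is $\sum_{k=1}^m\big(\|\phi_k-\tilde\phi_k\|_\fH^2+\|V_k-\tilde V_k\|_{L^\infty}^2\big)$.

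\textbf{Step 1: the coupling.} By Lemma \ref{L4.1}, $Q_N(t)=\mathcal T_N(t,0)\#Q_N^0$ is a symmetric coupling of $F_N(t)$ and $f(t)^{\otimes N}$. Let $\Pi_m\otimes\Pi_m$ keep the first $m$ oscillators in each of the two copies of $\Lambda_N$. Then $Q_{N:m}(t):=(\Pi_m\otimes\Pi_m)\#Q_N(t)$ has first marginal $\Pi_m\#F_N(t)=F_{N:m}(t)$. Its second marginal is $\Pi_m\#f(t)^{\otimes N}=f(t)^{\otimes m}$. So $Q_{N:m}(t)$ is an admissible coupling, and
\[
W_2(F_{N:m}(t),f(t)^{\otimes m})^2\le\sum_{k=1}^m\int_{\Lambda_N\times\Lambda_N}\Big(\|\phi_k-\tilde\phi_k\|_\fH^2+\|V_k-\tilde V_k\|_{L^\infty}^2\Big)Q_N(t,d\Phi_Nd\cV_Nd\tilde\Phi_Nd\tilde\cV_N).
\]

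\textbf{Step 2: symmetry.} Since $\mathbf T_\sigma\#Q_N(t)=Q_N(t)$ for every $\sigma\in\fS_N$, the integral
\[
\int\big(\|\phi_k-\tilde\phi_k\|_\fH^2+\|V_k-\tilde V_k\|^2_{L^\infty}\big)\,dQ_N(t)
\]
does not depend on $k$. To see this, take $\sigma$ to be a transposition exchanging $k$ and $1$; this map permutes the two copies simultaneously. Hence each of these integrals equals the average over $k\in[N]$, namely $\frac{1}{N}\sum_{k=1}^N\int(\cdots)\,dQ_N(t)=2D_N(t)$. The right-hand side of Step 1 is therefore $m\cdot 2D_N(t)=2mD_N(t)$, which is the claimed bound.

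\textbf{Main obstacle.} The delicate point is purely measure-theoretic: the $V$ components must carry a finite second moment, which is exactly the standing assumption on $Q_N^0$. This moment is preserved in time because $\mathcal T_N$ leaves the $V$'s unchanged. Without it, $D_N(t)$ would be infinite and the bound would be empty. One must also check that $\mathbf T_\sigma$ acts on both copies with the same $\sigma$; this is precisely what Lemma \ref{L4.1}(2) provides, and it is what allows the diagonal integrals to be identified.
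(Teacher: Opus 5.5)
Your proof is correct and follows the paper's argument. You restrict the symmetric coupling $Q_N(t)$ of Lemma \ref{L4.1}(2) to the first $m$ oscillators to obtain an admissible coupling of $F_{N:m}(t)$ and $f(t)^{\otimes m}$, and then use permutation invariance to show that each of the $m$ diagonal cost integrals equals $2D_N(t)$; your explicit unnormalized $\ell^2$-sum metric on the $m$-fold product is the convention implicit in the paper's inequality $\int\frac1m\sum_{k=1}^m(\cdots)\,dQ_{N:m}\ge\frac1m W_2(F_{N:m}(t),f(t)^{\otimes m})^2$.
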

\begin{proof}
Since $Q^0_N$ is the symmetric coupling of $F^0_N$ and of $(f^0)^{\otimes N}$, the second assertion in Lemma \ref{L4.1} implies that $Q_N(t)$ is a symmetric coupling 
of $F_N(t)$ and $f(t)^{\otimes N}$ for each $t\ge 0$. Thus, one has
\begin{align*}
\begin{aligned}
& \int_{\Lambda_N\times\Lambda_N}(\|\phi_k-\tilde\phi_k\|_\fH^2+\|V_k-\tilde V_k\|_{L^\infty}^2)Q_N(t,d\Phi_Nd\cV_Nd\tilde\Phi_Nd\cV_N) \\
& \hspace{1cm} =\int_{\Lambda_N\times\Lambda_N}(\|\phi_1-\tilde\phi_1\|_\fH^2+\|V_1-\tilde V_1\|_{L^\infty}^2)Q_N(t,d\Phi_Nd\cV_Nd\tilde\Phi_Nd\cV_N)
\end{aligned}
\end{align*}
for all $k \in [N]$. Therefore, one has
\[
D_N(t)= \frac{1}{2} \int_{\Lambda_N\times\Lambda_N}(\|\phi_k-\tilde\phi_k\|_\fH^2+\|V_k-\tilde V_k\|_{L^\infty}^2)Q_N(t,d\Phi_Nd\cV_Nd\tilde\Phi_Nd\cV_N)
\]
for all $k \in [m]$, so that
\begin{align}
\begin{aligned} \label{D-6}
D_N(t) &=\int_{\Lambda_N\times\Lambda_N} \frac{1}{2m} \sum_{k=1}^m(\|\phi_k-\tilde\phi_k\|_\fH^2+\|V_k-\tilde V_k\|_{L^\infty}^2)Q_N(t,d\Phi_Nd\cV_Nd\tilde\Phi_Nd\cV_N) \\
&=\int_{\Lambda_N\times\Lambda_N}\frac{1}{2m} \sum_{k=1}^m(\|\phi_k-\tilde\phi_k\|_\fH^2+\|V_k-\tilde V_k\|_{L^\infty}^2)Q_{N:m}(t,d\Phi_md\cV_md\tilde\Phi_md\cV_m).
\end{aligned}
\end{align}
Since $Q_{N:m}(t)$ is obviously a coupling of $F_{N:m}(t)$ and of $f(t)^{\otimes m}$, one concludes that
\begin{align}
\begin{aligned} \label{D-6-1}
& \int_{\Lambda_N\times\Lambda_N}\frac1m\sum_{k=1}^m(\|\phi_k-\tilde\phi_k\|_\fH^2+\|V_k-\tilde V_k\|_{L^\infty}^2)Q_{N:m}(t,d\Phi_md\cV_md\tilde\Phi_md\cV_m) \\
& \hspace{2cm} \ge\frac1m W_2(F_{N:m}(t),f(t)^{\otimes m})^2.
\end{aligned}
\end{align}
Finally, we combine \eqref{D-6} and \eqref{D-6-1} to get the desired estimate.
\end{proof}
Now, we are ready to state our first main result.
\begin{Thm} \label{T4.1}  
Let $f$ be the solution to \eqref{A-2} with the initial datum $f^{0}  \in\cP_2( \bS\fH\times C(\bbt^d))$, and let $F_N$ be the solution of the Liouville equation \eqref{C-2} with initial data $F_N^{0}=(f^{0})^{\otimes N}$. 
Then, the first marginal of $F_{N:1}$ of $F_N$ satisfies
\[  
W_2(F_{N:1}(t),f(t))^2   \le  \frac{2\kappa}{N}\frac{e^{L_2t}-1}{L_2}, \qquad\text{ for all }t\ge 0,
\]
where $W_2$ is 2-Wasserstein distance (see Chapter 7 in \cite{Vi} for its definition and its basic properties).
\end{Thm}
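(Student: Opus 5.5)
The plan is to combine the Gronwall bound \eqref{D-5} with Lemma \ref{L4.3} for $m=1$. The only real choice is the initial coupling $Q_N^0$, and it should be picked so that $D_N(0)=0$.

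Since $F_N^0=(f^0)^{\otimes N}$, I would take the diagonal coupling
\[
Q_N^0:=(\mathrm{Id}\times\mathrm{Id})\#(f^0)^{\otimes N},
\]
the push-forward of $(f^0)^{\otimes N}$ under $(\Phi_N,\cV_N)\mapsto(\Phi_N,\cV_N;\Phi_N,\cV_N)$. Both marginals are $(f^0)^{\otimes N}$, so $Q_N^0$ is a coupling of $F_N^0$ and $(f^0)^{\otimes N}$. It is symmetric because $\mathbf T_\si$ commutes with the diagonal embedding and $(f^0)^{\otimes N}$ is invariant under $T_\si$. Since $Q_N^0$ is concentrated on the diagonal, $\phi_k=\tilde\phi_k$ and $V_k=\tilde V_k$ hold $Q_N^0$-a.e., and therefore $D_N(0)=0$. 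The standing moment assumption $\int(\|V_k\|_{L^\infty}^2+\|\tilde V_k\|_{L^\infty}^2)\,dQ_N^0<\infty$ follows from $f^0\in\cP_2$. This is needed so that $D_N(t)$ is finite and the differential inequality makes sense.

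Next, set $Q_N(t):=\mathcal T_N(t,0)\#Q_N^0$. By Lemma \ref{L4.1}(1)--(2), $Q_N(t)$ is a symmetric coupling of $F_N(t)$ and $f(t)^{\otimes N}$, and by Lemma \ref{L4.1}(3) it solves the coupled Liouville equation. The computation \eqref{D-1}, Lemma \ref{L4.2}, \eqref{D-3-4} and the law of large numbers bound \eqref{D-3-5} then give $\dot D_N\le L_2D_N+\kappa/N$. With $D_N(0)=0$, \eqref{D-5} becomes
\[
D_N(t)\le \frac{\kappa}{N}\,\frac{e^{L_2t}-1}{L_2}.
\]
Lemma \ref{L4.3} with $m=1$ gives $W_2(F_{N:1}(t),f(t))^2\le 2D_N(t)$, and combining the two inequalities yields the stated bound.

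The main obstacle is making the differential inequality rigorous rather than formal. The step $\dot D_N=\mathcal I_{21}+\tfrac\kappa2\mathcal I_{22}$ relies on $Q_N(t)$ being a push-forward by a $C^1$-in-time flow. To avoid differentiating, I would compute $\frac{d}{dt}\|\phi_k(t)-\tilde\phi_k(t)\|^2_\fH$ along the characteristics $\mathcal T_N(t,0)$ and integrate against $Q_N^0$; the $V$-components are constant along the flow. The Lipschitz bound \eqref{D-3-1} involves $\max_k\|V_k\|_{L^\infty}$ through $L_1$, so I would either assume these norms are bounded on the support of $f^0$ or note that only $\|V_k\|_{L^\infty}$ of the first copy enters. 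The cross term $\|V_k-\tilde V_k\|$ vanishes identically along the diagonal, so it causes no difficulty. The law of large numbers term is evaluated under the $\tilde\Phi$-marginal $f(t)^{\otimes N}$, which Lemma \ref{L4.1}(1) guarantees. With these points checked, Gronwall's inequality closes the argument.
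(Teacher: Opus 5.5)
Your proposal is correct and follows the paper's argument exactly: the diagonal coupling $Q_N^0=\prod_{j}f^0(d\phi_jdV_j)\delta_{\phi_j}(\tilde\phi_j)\delta_{V_j}(\tilde V_j)$ gives $D_N(0)=0$, and \eqref{D-5} combined with Lemma~\ref{L4.3} for $m=1$ yields the bound. Your concern that $L_1$ involves the random quantity $\max_k\|V_k\|_{L^\infty}$ is fair (the paper glosses over it), but no extra hypothesis is needed: since $V_k=\tilde V_k$ holds $Q_N(t)$-a.e., the part $\frac{1}{\mathrm{i}\hbar}U(\hbar t)^*(V_k+\om\star_x|U(\hbar t)\phi_k|^2)U(\hbar t)(\phi_k-\tilde\phi_k)$ of $Y^f(t,\phi_k,V_k)-Y^f(t,\tilde\phi_k,V_k)$ is $\frac{1}{\mathrm{i}\hbar}$ times a self-adjoint operator applied to $\phi_k-\tilde\phi_k$, so its pairing with $\phi_k-\tilde\phi_k$ has zero real part, which leaves $\mathcal{I}_{21}\le\frac{4}{\hbar}\|\om\|_{L^\infty}D_N\le 3L_1D_N$ independently of the potentials.
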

\begin{proof}  
It follows from Lemma \ref{L4.3} and \eqref{D-5} that $W_2(F_{N:1}(t),f(t))^2$ and  $D_N$ satisfy
\begin{equation} \label{D-7}
W_2(F_{N:1}(t),f(t))^2  \leq  2D_N(t) \leq 2D_N(0)\exp(L_2t)+\frac{2\kappa}{N}\frac{e^{L_2t}-1}{L_2}\,,
\end{equation}
for all $t\ge 0$. Now, we choose the initial coupling $Q_N^{0}$ of the form 
\[
Q_N^{0}=\prod_{j=1}^Nf^{0}(d\phi_j dV_j)\de_{\phi_j}({\tilde \phi}_j)\de_{V_j}({\tilde V}_j).
\]
Then, by definition of $D_N(0)$, one has 
\begin{equation} \label{D-8}
D_N(0)=0.
\end{equation}
Finally, we combine \eqref{D-7} and \eqref{D-8} to find the desired estimate.
\end{proof}
\begin{Rmk}
Using Lemma \ref{L4.3} and the second inequality in \eqref{D-7}, one finds that, for each $m \in [N]$, and under the same assumptions as in Theorem \ref{T4.1}
\[  
W_2(F_{N:m}(t),f(t)^{\otimes m})^2   \le  \frac{2m\kappa}{N}\frac{e^{L_2t}-1}{L_2}, \qquad\text{ for all }t\ge 0,
\]
\end{Rmk}
\vspace{0.2cm}

In the following two sections, we study a priori emergent dynamics (complete synchronization and practical synchronization) of the kinetic SL equation \eqref{A-2}.


\section{Complete synchronization of the kinetic SL equation}  \label{sec:5}
\setcounter{equation}{0}


In this section, we study the emergent dynamics of the kinetic SL equation \eqref{A-2} in the case of zero Hamiltonians and Hartree term:
\[  
V = 0, \quad \omega \equiv 0,
\]
and we assume a mono-kinetic ansatz for $f$: 
\[ 
f(t, d\phi dV) = \rho(t,d\phi)\delta_0(dV), 
\]
where $\rho$ is a time-dependent Borel probability measure on $\bS\fH$ and $\delta_0$ is the Dirac measure concentrated at $V = 0$. In this setting, the local mass density 
\[ \rho(t, d\phi) =  \int_{C(\bT^d)}  f(t, d\phi dV) \]
satisfies
\begin{equation} 
\begin{cases} \label{E-1}
\displaystyle \d_t \rho + \frac{\kappa}{2} \Div_\phi\left(\rho \int_{\bS\fH} K(\phi, \tilde \phi) \rho(t,d\tilde \phi)\right)=0, \quad (t, \phi) \in \bbr_+ \times \bS\fH, \\
\displaystyle \rho \Big|_{t = 0} = \rho^0,
\end{cases}
\end{equation}
where the kernel $K$ is given as follows:
\begin{equation} \label{E-2}
K(\phi, {\tilde \phi}) = \tilde\phi-\la\phi|\tilde\phi\ra\phi =  (I-\bP_\phi) \tilde \phi.
\end{equation}
Next, we show that the total mass is conserved along \eqref{E-1}.
\begin{Lem} \label{L5.1} Let $\rho= \rho(t,d\phi)$ be a weak solution to \eqref{E-1}. Then, the total mass is conserved along the dynamics \eqref{E-1}.
\[  
\int_{\bS\fH} \rho(t,d\phi) =  \int_{\bS\fH} \rho^0(d\phi), \quad t > 0.
\]
\end{Lem}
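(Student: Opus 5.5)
The plan is to test the weak formulation of \eqref{E-1} against the constant test function $\chi\equiv 1$ on $\bS\fH$. For a weak solution, for every $\chi\in C^1_b(\bS\fH)$ one has, in time-integrated form,
\[
\int_{\bS\fH}\chi(\phi)\rho(t,d\phi)=\int_{\bS\fH}\chi(\phi)\rho^0(d\phi)+\frac{\kappa}{2}\int_0^t\int_{\bS\fH}\La d\chi(\phi),\int_{\bS\fH}K(\phi,\tilde\phi)\rho(s,d\tilde\phi)\Ra\rho(s,d\phi)\,ds,
\]
exactly as in the computation \eqref{C-3} for the marginals of $F_N$, and in accordance with Definition \ref{D-B.1} of the divergence on $\bS\fH$. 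Since $\chi\equiv 1$ belongs to $C^1_b(\bS\fH)$ and $d\chi\equiv 0$, the integrand vanishes identically, which gives $\int_{\bS\fH}\rho(t,d\phi)=\int_{\bS\fH}\rho^0(d\phi)$ for all $t>0$.

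The only point needing care is that the flux term is well defined, so that inserting $\chi\equiv1$ is legitimate. This follows from the bound $\|K(\phi,\tilde\phi)\|_\fH=\|(I-\bP_\phi)\tilde\phi\|_\fH\le\|\tilde\phi\|_\fH=1$, which makes the velocity field bounded by $\kappa/2$ and hence $\rho(s)$-integrable for each $s$. No compactness or decay at infinity is required, since the phase space $\bS\fH$ carries a bounded field and constants are admissible test functions.

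Alternatively, if $\rho$ is the Lagrangian solution $\rho(t)=\phi(t|\cdot)\#\rho^0$ furnished by Theorem \ref{T-MFFlow} (taking $V=0$ and $\omega=0$), the conclusion is immediate: a push-forward preserves total mass, because $\int\rho(t,d\phi)=\int 1\circ\phi(t|\cdot)\,d\rho^0=\int\rho^0(d\phi)$. I expect no real obstacle here. The only subtlety is to state which notion of weak solution is being used and to note that $\chi\equiv1$ is an admissible test function within it.
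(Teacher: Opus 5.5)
Your proposal is correct and follows the paper's proof: the paper also inserts the constant test function $h\equiv 1$ into the time-integrated weak formulation of \eqref{E-1} (obtained via Lemma \ref{L-B.1}), and the flux term vanishes because $dh\equiv 0$. Your version also writes the inner measure correctly as $\rho(s,d\tilde\phi)$, where the paper's display has $\rho(t,d\tilde\phi)$.
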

\begin{proof} It follows from the weak formulation of \eqref{E-1} in Lemma \ref{L-B.1} that 
\begin{equation} \label{E-3}
\int_{\bS\fH} h(\phi)\rho(t,d\phi) = \int_{\bS\fH} h(\phi)\rho^0(d\phi) +  \frac{\kappa}{2} \int_0^t \int_{(\bS\fH)^2} \langle dh(\phi), K(\phi, \tilde \phi) \rangle \rho(s, d\phi) \rho(t, d\tilde \phi) ds,
\end{equation}
for a test function $h \in C^1(\bS\fH)$. Now, we choose $h \equiv 1$ in \eqref{E-3}  to get the desired conservation of total mass.
\end{proof}
\subsection{Particle path} \label{sec:5.1}
In this subsection, we study the particle path associated with the continuity equation \eqref{E-1} with a nonlocal flux. Let $t \mapsto \rho(t)$ be continuous on $[0, \infty)$ with values in the set $\cP(\bS\fH)$ of Borel probability measures on $\bS\fH$. In what follows, we assume that 
\begin{equation} \label{E-3-1}
\int_{\bS\fH} \rho(t,d\phi) = 1, \quad t \geq 0. 
\end{equation}
Next, we consider the particle path given by the solution for the Cauchy problem:
\begin{equation}  \label{E-4}
\begin{cases}
\displaystyle \frac{d \phi(t)}{dt} =  \frac{\kappa}{2} \int_{\bS\fH} K(\phi(t), {\tilde \phi}) \rho(t, d{\tilde \phi}), \quad t > 0,  \\
\displaystyle \phi \Big|_{t = 0}  = \phi^0 \in\bS\fH.
\end{cases}
\end{equation}
Note that the unique solvability of $\phi = \phi(t)$ is guaranteed by the Cauchy-Lipschitz theory in Appendix \ref{S-CL} (see Theorem \ref{T-MFFlow}). Let ${\mathcal S}(t)$ be the associated flow map. Then, we have the following relation (see \cite{A-G, G-H} for more details):
\begin{equation} \label{E-4-1}
\rho(t) = {\mathcal S}(t) \# \rho^0\,, \quad\text{ and hence} \quad \mbox{supp}(\rho(t)) = {\mathcal S}(t)(\mbox{supp}\,\rho^0)\,.
\end{equation}
\subsection{Asymptotic dynamics} \label{sec:5.2}
For the emergent dynamics of \eqref{E-1}, we recall the functional corresponding to the diameter of the $\phi$-support of $\rho(t, \cdot)$:
\begin{equation} \label{E-5}
{\mathcal D}[\rho(t)] = \sup \Big \{ \| \phi - {\tilde \phi} \|_{\fH}:~\forall~\phi,~{\tilde \phi} \in \mbox{supp}\,\rho(t) \Big \}. 
\end{equation}
Then, it is easy to see that 
\[
{\mathcal D}[\rho(t)] \leq 2,
\]
We now recall the concept of complete synchronization as follows.
\begin{Def} \label{D5.1}
Let $\rho = \rho(t, \cdot)$ be a global Lagrangian weak solution to \eqref{E-1}  given by the relation \eqref{E-4-1} (see Remark \ref{RB.1}). Then, $\rho$ exhibits ``{\it complete (state) synchronization}'' if $\rho$ satisfies
\[  
\lim_{t \to \infty} {\mathcal D}[\rho(t)] = 0. 
\]
\end{Def}
In the sequel, we study the temporal evolution of ${\mathcal D}[\rho(t)]$. Let $\phi_1(t) = \phi_1(t; \phi^{0}_{1})$ and $\phi_2(t) = \phi_2(t; \phi^{0}_{2})$ be two global solutions to \eqref{E-4} corresponding to the initial data $\phi^{0}_{1}$ and $\phi^{0}_{2}$ at time $t = 0$, respectively. 
Then, it follows from  $\eqref{E-4}$ that for $t \geq 0$,
\begin{equation} \label{E-6}
{\dot \phi}_1(t) =  \frac{\kappa}{2} \int_{\bS\fH}  K(\phi_1(t), {\tilde \phi}) \rho(t,d{\tilde \phi}), \quad 
{\dot \phi}_2(t) =  \frac{\kappa}{2} \int_{\bS\fH}  K(\phi_2(t), {\tilde \phi}) \rho(t,d{\tilde \phi}).
\end{equation}
Next, we study the temporal evolution of $\|\phi_1(t) - \phi_2(t) \|_{\fH}^2$.
\begin{Lem} \label{L5.2}
Let $\phi_1 = \phi_1(t)$ and $\phi_2 = \phi_2(t)$ be two global solutions to \eqref{E-4} corresponding to the initial data $\phi_1^0,~\phi_2^0 \in \mbox{supp}\,\rho^0$, respectively. Then, one has
\begin{equation} \label{E-7}
\frac{d}{dt}  \|\phi_1(t) - \phi_2(t) \|_{\fH} \leq  \kappa \Big(  -\frac{1}{2} +  {\mathcal D}[\rho(t)] \Big)\| \phi_1(t) - \phi_2(t) \|_{\fH}, \quad t > 0. 
\end{equation}
\end{Lem}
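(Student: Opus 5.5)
We compute $\frac{d}{dt}\|\phi_1-\phi_2\|_\fH^2$ directly from \eqref{E-6}, mimicking the estimate of ${\mathcal I}_{16}+{\mathcal I}_{17}$ in Lemma \ref{L2.2}, but with the ensemble average $\frac1N\sum_m$ replaced by integration against $\rho(t,d\tilde\phi)$. First, by \eqref{E-3-1} and the definition \eqref{E-2} of $K$,
\[
\dot\phi_1-\dot\phi_2=\frac\kappa2\int_{\bS\fH}\Big(\la\phi_2|\tilde\phi\ra\phi_2-\la\phi_1|\tilde\phi\ra\phi_1\Big)\rho(t,d\tilde\phi),
\]
since the $\tilde\phi$ terms cancel. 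This gives
\[
\frac{d}{dt}\|\phi_1-\phi_2\|_\fH^2=\kappa\int_{\bS\fH}\RE\Big[\la\phi_2|\tilde\phi\ra\la\phi_2|\phi_1-\phi_2\ra-\la\phi_1|\tilde\phi\ra\la\phi_1|\phi_1-\phi_2\ra\Big]\rho(t,d\tilde\phi).
\]

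Next, for fixed $\tilde\phi$ we rewrite the integrand by inserting $\tilde\phi=\phi_j+(\tilde\phi-\phi_j)$ into the inner products. Using $\|\phi_1\|_\fH=\|\phi_2\|_\fH=1$ (Lemma \ref{L2.1} and the tangency argument of Section \ref{sec:3.1}), the part with $\la\phi_j|\phi_j\ra=1$ gives
\[
\RE\big(\la\phi_2|\phi_1-\phi_2\ra-\la\phi_1|\phi_1-\phi_2\ra\big)=-\|\phi_1-\phi_2\|_\fH^2.
\]
The remainder is
\[
\RE\big[\la\phi_2|\tilde\phi-\phi_2\ra\la\phi_2|\phi_1-\phi_2\ra-\la\phi_1|\tilde\phi-\phi_1\ra\la\phi_1|\phi_1-\phi_2\ra\big].
\]
Here the identity $\RE\la\phi_1|\phi_1-\phi_2\ra=\tfrac12\|\phi_1-\phi_2\|^2=-\RE\la\phi_2|\phi_1-\phi_2\ra$ is the key. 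It yields $|\la\phi_j|\phi_1-\phi_2\ra|\le\|\phi_1-\phi_2\|$ at worst, and more precisely it produces the $-|1-\la\phi_1|\phi_2\ra|^2$ correction of \cite{C-C-H}. The cruder route also works: bound the remainder by $(\|\tilde\phi-\phi_1\|+\|\tilde\phi-\phi_2\|)\|\phi_1-\phi_2\|^2$, as in Lemma \ref{L2.2}, and drop the nonpositive term $-|1-\la\phi_1|\phi_2\ra|^2$. Getting the quadratic (not linear) factor $\|\phi_1-\phi_2\|^2$ in this remainder is the main obstacle. I would follow \cite{C-C-H} exactly: regroup the terms as $\la\phi_2-\phi_1|\tilde\phi-\cdot\ra$ differences, using $\la\phi_2|\tilde\phi\ra\phi_2-\la\phi_1|\tilde\phi\ra\phi_1=\la\phi_2-\phi_1|\tilde\phi\ra\phi_2+\la\phi_1|\tilde\phi\ra(\phi_2-\phi_1)$, and exploit the real parts.

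Then we use the support information. By \eqref{E-4-1}, $\phi_j(t)={\mathcal S}(t)\phi_j^0\in\mbox{supp}\,\rho(t)$, and $\rho(t)$ is concentrated on its support. Hence for $\rho(t)$-a.e.\ $\tilde\phi$ we have $\|\tilde\phi-\phi_j\|_\fH\le{\mathcal D}[\rho(t)]$. Integrating against the probability measure $\rho(t)$ gives
\[
\frac{d}{dt}\|\phi_1-\phi_2\|^2_\fH\le\kappa\big(2{\mathcal D}[\rho(t)]-1\big)\|\phi_1-\phi_2\|_\fH^2.
\]
Dividing by $2\|\phi_1-\phi_2\|_\fH$ where it is nonzero gives \eqref{E-7}. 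If $\phi_1^0=\phi_2^0$, uniqueness gives $\phi_1\equiv\phi_2$ and the claim is trivial. Otherwise uniqueness keeps $\phi_1(t)\neq\phi_2(t)$ for all $t$, so the quotient is legitimate. As in Section \ref{sec:2.3}, the derivative is rigorous because the particle paths are $C^1$ by Theorem \ref{T-MFFlow}.
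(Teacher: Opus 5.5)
Your outline is the same as the paper's: differentiate $\|\phi_1-\phi_2\|^2$, cancel the $\tilde\phi$ terms, bound the integrand pointwise for $\tilde\phi\in\mathrm{supp}\,\rho(t)$, integrate, and divide by $2\|\phi_1-\phi_2\|$. Your handling of the support and of the division step is fine. The gap is the one step that carries the lemma, the pointwise inequality
\[
\mathrm{Re}\,\mathcal I_3\le\big(-1+\|\tilde\phi-\phi_1\|+\|\tilde\phi-\phi_2\|\big)\|\phi_1-\phi_2\|^2 .
\]
You name it as the main obstacle and then defer it without proof. Moreover, the decomposition you do carry out cannot give it by norm estimates.

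After inserting $\tilde\phi=\phi_j+(\tilde\phi-\phi_j)$, each remainder term $\langle\phi_j|\tilde\phi-\phi_j\rangle\langle\phi_j|\phi_1-\phi_2\rangle$ is by itself only $O(\|\tilde\phi-\phi_j\|\,\|\phi_1-\phi_2\|)$. The identity $\mathrm{Re}\langle\phi_1|\phi_1-\phi_2\rangle=\tfrac12\|\phi_1-\phi_2\|^2$ controls real parts only, while $\mathrm{Im}\langle\phi_j|\phi_1-\phi_2\rangle$ is genuinely first order. Nor is your remainder bounded in absolute value by $(\|\tilde\phi-\phi_1\|+\|\tilde\phi-\phi_2\|)\|\phi_1-\phi_2\|^2$. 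Take $\tilde\phi=\phi_1$ and $w=\langle\phi_2|\phi_1-\phi_2\rangle$; the remainder is then $\mathrm{Re}(w^2)=(\mathrm{Re}\,w)^2-(\mathrm{Im}\,w)^2$. For $\phi_2=e^{i\delta}\phi_1$ this is $\approx-\delta^2\approx-\|\phi_1-\phi_2\|^2$, while your claimed bound is $\|\phi_1-\phi_2\|^3$. So the estimate holds only one-sidedly, because of a sign you never isolate. Appealing to Lemma~\ref{L2.2} does not close this either. That lemma is stated for the ensemble sum and is itself only cited, so the pointwise three-vector inequality for an arbitrary $\tilde\phi$ is exactly what remains to be shown.

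The missing idea is to route the splitting through $\tilde\phi$ so that a nonpositive term appears. Your regrouping hint is the right start; it gives
\[
\mathcal I_3=\langle\phi_2-\phi_1|\tilde\phi\rangle\langle\phi_2|\phi_1-\phi_2\rangle-\langle\phi_1|\tilde\phi\rangle\|\phi_1-\phi_2\|^2 .
\]
Now write $\langle\phi_1|\tilde\phi\rangle=1+\langle\phi_1|\tilde\phi-\phi_1\rangle$ and $\langle\phi_2|\phi_1-\phi_2\rangle=\langle\tilde\phi|\phi_1-\phi_2\rangle+\langle\phi_2-\tilde\phi|\phi_1-\phi_2\rangle$. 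This produces four pieces:
\begin{itemize}
\item the main term $-\|\phi_1-\phi_2\|^2$;
\item an error term bounded by $\|\tilde\phi-\phi_1\|\,\|\phi_1-\phi_2\|^2$;
\item an error term bounded by $\|\tilde\phi-\phi_2\|\,\|\phi_1-\phi_2\|^2$, using $|\langle\phi_2-\phi_1|\tilde\phi\rangle|\le\|\phi_1-\phi_2\|$;
\item the term $\langle\phi_2-\phi_1|\tilde\phi\rangle\langle\tilde\phi|\phi_1-\phi_2\rangle=-|\langle\phi_2-\phi_1|\tilde\phi\rangle|^2\le 0$.
\end{itemize}
This last, sign-definite term is what absorbs the first-order imaginary parts. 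The paper's split $\mathcal I_3=\mathcal I_{31}+\mathcal I_{32}+\mathcal I_{33}+\mathcal I_{34}$ is the same computation, with the roles of $\phi_1$ and $\phi_2$ exchanged in the first regrouping. Once this is written out, the rest of your argument goes through.
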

\begin{proof} 
It follows from \eqref{E-2} and \eqref{E-6} that 
\begin{align}
\begin{aligned} \label{E-8}
\frac{d}{dt} (\phi_1(t) - \phi_2(t)) &= \frac{\kappa}{2} \int_{\bS\fH} \Big( K(\phi_1(t), {\tilde \phi}) -  K(\phi_2(t), {\tilde \phi}) \Big) \rho(t,d{\tilde \phi})  \\
&=  \frac{\kappa}{2} \int_{\bS\fH}  \Big( \la\phi_2(t)|\tilde\phi\ra\phi_2(t) -  \la\phi_1(t) |\tilde\phi\ra\phi_1(t)  \Big) \rho(t,d{\tilde \phi}),
\end{aligned}
\end{align}
where we use the relation:
\[ K(\phi_1, {\tilde \phi}) -  K(\phi_2, {\tilde \phi}) = \la\phi_2|\tilde\phi\ra\phi_2 -  \la\phi_1 |\tilde\phi\ra\phi_1. \]
Now, we use \eqref{E-8} to find 
\begin{align}
\begin{aligned} \label{E-9}
 &\frac{d}{dt}  \|\phi_1(t) - \phi_2(t) \|_{\fH}^2 = \frac{d}{dt} \langle \phi_1(t) - \phi_2(t)| \phi_1(t) - \phi_2(t) \rangle \\
 &= \Big \langle \frac{d}{dt} ( \phi_1(t) - \phi_2(t)) \Big|  \phi_1(t) - \phi_2(t) \Big \rangle +  \Big \langle \phi_1(t) - \phi_2(t) \Big | \frac{d}{dt} (\phi_1(t) - \phi_2(t)) \Big \rangle \\
 &=  \frac{\kappa}{2} \Big \langle  \int_{\bS\fH} \Big( \la\phi_2(t) |\tilde\phi\ra\phi_2(t) -  \la\phi_1(t) |\tilde\phi\ra\phi_1(t) \Big)  \rho(t,d{\tilde \phi}) \Big|  \phi_1(t) - \phi_2(t) \Big \rangle \\
 &\hspace{0.2cm}+  \frac{\kappa}{2} \Big \langle \phi_1(t) - \phi_2(t) \Big|  \int_{\bS\fH} \Big( \la\phi_2(t) |\tilde\phi\ra\phi_2(t) -  \la\phi_1(t) |\tilde\phi\ra\phi_1(t) \Big)  \rho(t, d{\tilde \phi}) \Big \rangle \\
 &= \frac{\kappa}{2}  \int_{\bS\fH}  \la\phi_2(t) |\tilde\phi\ra \langle \phi_2(t) | \phi_1(t)\!-\!\phi_2(t) \rangle \rho(t,d{\tilde \phi})\!-\!  \frac{\kappa}{2}   \int_{\bS\fH}  \la\phi_1(t) |\tilde\phi\ra  \langle \phi_1(t)|   \phi_1(t)\!-\! \phi_2(t) \rangle \rho(t,d{\tilde \phi})  \\
 & \hspace{0.2cm} +  \frac{\kappa}{2}  \overline{\int_{\bS\fH} \la\phi_2(t) |\tilde\phi \ra \langle \phi_2(t), \phi_1(t)\!-\!\phi_2(t) \rangle \rho(t,d{\tilde \phi}) }\!-\! \frac{\kappa}{2} \overline{ \int_{\bS\fH} \la\phi_1(t) |\tilde\phi \ra  \langle  \phi_1(t), \phi_1(t)\!-\! \phi_2(t) \rangle \rho(t,d{\tilde \phi}) }    \\
 &= \kappa \mbox{Re} \Big[  \int_{\bS\fH} \underbrace{\Big( \la\phi_2(t)|\tilde\phi\ra  \langle \phi_2(t)| \phi_1(t) - \phi_2(t) \rangle -  \la\phi_1(t) |\tilde\phi\ra \langle \phi_1(t)|   \phi_1(t) - \phi_2(r) \rangle \Big)}_{=: {\mathcal I}_3(t)}\rho(t,d{\tilde \phi})      \Big].
\end{aligned}
\end{align}
Next, we split ${\mathcal I}_3$ into several pieces:
\begin{align}
\begin{aligned} \label{E-10}
{\mathcal I}_3 &= \la\phi_2|\tilde\phi\ra \langle \phi_2| \phi_1 - \phi_2 \rangle  - \la\phi_1 |\tilde\phi\ra \langle \phi_1|   \phi_1 - \phi_2 \rangle  \\
& = \la\phi_2|\tilde\phi\ra \langle \phi_2 - \phi_1| \phi_1 - \phi_2 \rangle + \la\phi_2|\tilde\phi\ra \langle \phi_1| \phi_1 - \phi_2 \rangle - \la\phi_1 |\tilde\phi\ra \langle \phi_1|   \phi_1 - \phi_2 \rangle \\
& =  \la\phi_2|\tilde\phi\ra \langle \phi_2 - \phi_1| \phi_1 - \phi_2 \rangle + \la\phi_2 - \phi_1|\tilde\phi\ra \langle \phi_1| \phi_1 - \phi_2 \rangle  \\
&=  \la\phi_2| \phi_2 \ra \langle \phi_2 - \phi_1| \phi_1 - \phi_2 \rangle + \la\phi_2|\tilde\phi - \phi_2 \ra \langle \phi_2 - \phi_1| \phi_1 - \phi_2 \rangle \\
& \hspace{0.2cm}  +  \la\phi_2 - \phi_1|\tilde\phi\ra \langle \tilde{\phi}| \phi_1 - \phi_2 \rangle  + \la\phi_2 - \phi_1|\tilde\phi\ra \langle \phi_1 - {\tilde \phi}| \phi_1 - \phi_2 \rangle    \\
& =: {\mathcal I}_{31} +  {\mathcal I}_{32} +  {\mathcal I}_{33} +  {\mathcal I}_{34}.
\end{aligned}
\end{align}
Below, we estimate the terms ${\mathcal I}_{3i},~i = 1,2,3,4$ one by one. \newline

\noindent $\bullet$~Case A (Estimate of ${\mathcal I}_{31}$ and ${\mathcal I}_{33}$):~We use $\|\phi_2 \|_{\fH} = 1$ to find 
\begin{equation} \label{E-11}
{\mathcal I}_{31} = -  \| \phi_1 - \phi_2 \|_{\fH}^2, \quad {\mathcal I}_{33} =   -|\la\phi_2 - \phi_1|\tilde\phi\ra|^2.    
\end{equation}
\noindent $\bullet$~Case B (Estimate of ${\mathcal I}_{32}$):~We use $\| \phi_2 \|_{\fH} = 1$ and the Cauchy-Schwartz inequality to find 
\begin{equation} \label{E-12}
|{\mathcal I}_{32}| \leq  \| \phi_2 \|_{\fH} \cdot \| \tilde\phi - \phi_2 \|_{\fH} \cdot \| \phi_2 - \phi_1 \|^2_{\fH}  = \| \tilde\phi - \phi_2 \|_{\fH} \|\phi_1 - \phi_2 \|_{\fH}^2.
\end{equation}
\noindent $\bullet$~Case C (Estimate of ${\mathcal I}_{34}$):~Similar to Case B, we have
\begin{equation} \label{E-13}
|{\mathcal I}_{34}| \leq  |\la\phi_2 - \phi_1|\tilde\phi\ra| \cdot |\langle \phi_1 - {\tilde \phi}| \phi_1 - \phi_2 \rangle | \leq \|  \phi_1 - {\tilde \phi} \|_{\fH}  \|  \phi_1 - \phi_2 \|_{\fH}^2.
\end{equation}
In \eqref{E-9}, we combine all the estimates \eqref{E-10}, \eqref{E-11}, \eqref{E-12} and \eqref{E-13} to find 
\begin{align*}
\begin{aligned}
& \frac{d}{dt}  \|\phi_1(t) - \phi_2(t) \|_{\fH}^2  \\
& \hspace{0.5cm} \leq   \kappa \Big[  -1 +  \int_{\bS\fH} \Big( \| \tilde\phi - \phi_2(t) \|_{\fH}  +   \|  \phi_1(t) - {\tilde \phi} \|_{\fH}  \Big )\rho(t,d{\tilde \phi}) \Big ]  \| \phi_1(t) - \phi_2(t) \|_{\fH}^2 \\
& \hspace{0.7cm} - \underbrace{\int_{\bS\fH} |\la\phi_2 - \phi_1|\tilde\phi\ra|^2 \rho(t,d{\tilde \phi})}_{\geq 0} \\
& \hspace{0.5cm} \leq  \kappa \Big[  -1 +  \int_{\bS\fH} \Big( \| \tilde\phi - \phi_2(t) \|_{\fH}  +   \|  \phi_1(t) - {\tilde \phi} \|_{\fH}  \Big )\rho(t,d{\tilde \phi}) \Big ]  \| \phi_1(t) - \phi_2(t) \|_{\fH}^2 \\
& \hspace{0.5cm}= \kappa \Big[  -1 +  \int_{\mbox{supp}(\rho)} \Big( \| \tilde\phi - \phi_2(t) \|_{\fH}  +   \|  \phi_1(t) - {\tilde \phi} \|_{\fH}  \Big )\rho(t,d{\tilde \phi}) \Big ]  \| \phi_1(t) - \phi_2(t) \|_{\fH}^2 \\
& \hspace{0.5cm} \leq \kappa \Big(  -1 +  2{\mathcal D}[\rho(t)] \Big)\| \phi_1(t) - \phi_2(t) \|_{\fH}^2,
\end{aligned}
\end{align*}
where we used the unit mass condition \eqref{E-3-1} and \eqref{E-5} to see
\[
 \int_{\mbox{supp}(\rho(t))} \Big( \| \tilde\phi - \phi_2(t) \|_{\fH}  +   \|  \phi_1(t) - {\tilde \phi} \|_{\fH}  \Big )\rho(t,d{\tilde \phi}) \leq  2{\mathcal D}[\rho(t)]  \int_{\mbox{supp}(\rho(t))} \rho(t,d{\tilde \phi}) 
 \leq  2{\mathcal D}[\rho(t)].
\]
Therefore, we have
\[ \frac{d}{dt}  \|\phi_1(t) - \phi_2(t) \|_{\fH}^2 \leq  \kappa \Big(  -1 +  2{\mathcal D}[\rho(t)] \Big)\| \phi_1(t) - \phi_2(t) \|_{\fH}^2. \]
This yields the desired estimate \eqref{E-7}. 
\end{proof}
\noindent Now, we are ready to prove the exponential decay of ${\mathcal D}[\rho(t)]$. For $\eta\in \Big [0, \frac{1}{16} \Big )$, we consider the equation motivated by the right-hand side of \eqref{E-7}:
\[
z^2-  \frac{1}{2} z+\eta=0
\]
Then, it has two nonnegative roots $\zeta_1(\eta)$ and $\zeta_2(\eta)$ such that
\[
0\le\zeta_1(\eta)< \frac{1}{4} <\zeta_2(\eta)\le 2.
\]
For all $z\ge 0$, one has
\[
z^2- \frac{1}{2} z+\eta<0 \quad \hbox{ if and only if } \quad \zeta_1(\eta)<z<\zeta_2(\eta).
\]
Note that
\[
\zeta_1(0)=0\quad\hbox{ and }\quad\zeta_2(0)= \frac{1}{2}.
\]

\begin{Lem}\label{L5.3}
For a subset $\Si \subset[0, \frac{1}{2})$, we assume that coupling strength and initial data satisfy 
\[ 
\kappa > 0, \quad 0 < \Dlt^0 := \sup(\Si) < \frac{1}{2}, 
\]
and let $x\in C^1([0,+\infty)$ be a function satisfying the following relations:
\[
\begin{cases}
\displaystyle 0 \le x(t)\le 2, \quad t \geq 0, \\
\displaystyle \dot x(t)\le  \kappa \Big(- \frac{1}{2} +  \Dlt(t) \Big) x(t),  \quad t > 0, \\
\displaystyle  x \Big|_{t = 0} =x^0, \quad \Dlt(t):=\sup_{x^0\in\Si}x(t), \quad  t \geq 0.
\end{cases}
\]
Then we have
\[
\Dlt(t)\le y(t)\quad\hbox{ for each }t\ge 0,
\]
where $y$ is the solution of the Cauchy problem for the Riccati equation:
\[
\begin{cases}
\displaystyle \dot y= -\frac{\ka}{2} y+  \ka y^2,\quad t > 0, \\
\displaystyle y \Big|_{t = 0} =\Dlt^0.
\end{cases}
\]
\end{Lem}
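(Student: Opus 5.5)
The plan is a comparison argument. First I describe the comparison function $y$ explicitly. The Riccati equation $\dot y = \kappa y(y - \tfrac12)$ with $0<\Delta^0<\tfrac12$ has the explicit solution
\[
y(t)=\frac{\Delta^0}{(1-2\Delta^0)e^{\frac{\kappa}{2}t}+2\Delta^0},
\]
which is positive, strictly decreasing, and tends to $0$. In particular $0<y(t)\le \Delta^0<\tfrac12$ for all $t\ge0$. I will show that $\Delta(t)\le y(t)$ for every $t\ge 0$.

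Fix $\varepsilon>0$ small, with $\Delta^0+\varepsilon<\tfrac12$. Let $y_\varepsilon$ be the solution of the same Riccati equation with initial value $y_\varepsilon(0)=\Delta^0+\varepsilon$, and set
\[
t_*:=\sup\{s\ge0:\ x(\tau)<y_\varepsilon(\tau)\ \text{for all }\tau\in[0,s]\text{ and all }x^0\in\Sigma\}.
\]
At $t=0$ we have $x(0)=x^0\le\Delta^0<y_\varepsilon(0)$, so the set is nonempty. Suppose $t_*<\infty$. On $[0,t_*]$ we have $\Delta\le y_\varepsilon$, hence each trajectory satisfies
\[
\dot x\le\kappa\bigl(-\tfrac12+y_\varepsilon\bigr)x.
\]
Integrating and using $x\ge0$ gives
\[
x(t)\le x^0\exp\Bigl(\kappa\int_0^t(y_\varepsilon-\tfrac12)\,ds\Bigr).
\]
On the other hand, $y_\varepsilon$ satisfies the exact identity
\[
y_\varepsilon(t)=y_\varepsilon(0)\exp\Bigl(\kappa\int_0^t(y_\varepsilon-\tfrac12)\,ds\Bigr).
\]
Taking the supremum over $x^0\in\Sigma$ and writing $E(t)$ for the common exponential factor, we get
\[
\Delta(t)\le \Delta^0 E(t)\le y_\varepsilon(t)-\varepsilon E(t)\quad\text{on }[0,t_*].
\]
The factor $E$ is bounded below on compact intervals, say $E\ge e^{-\kappa t_*/2}$, so there is a uniform positive gap $y_\varepsilon-\Delta\ge\varepsilon e^{-\kappa t_*/2}$ at $t_*$.

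The main obstacle is turning this gap at $t_*$ into a contradiction, because $\Delta$ is a supremum of a family and need not be continuous in $t$. To handle this I use the differential inequality and $x\le2$ to get a uniform one-sided bound on the growth of each $x$. Indeed,
\[
\dot x\le\kappa\bigl(-\tfrac12+\Delta\bigr)x\le\kappa\cdot\tfrac32\cdot 2=3\kappa,
\]
since $\Delta\le2$. Hence $x(t)\le x(t_*)+3\kappa(t-t_*)$ uniformly in $x^0\in\Sigma$. Since $y_\varepsilon$ is continuous, the uniform gap then persists on $[t_*,t_*+\delta]$ for some $\delta>0$ independent of $x^0$. This contradicts the definition of $t_*$, so $t_*=\infty$.

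Therefore $\Delta(t)\le y_\varepsilon(t)$ for all $t\ge0$. Finally I let $\varepsilon\to0$: by continuous dependence of the explicit Riccati solution on its initial datum, $y_\varepsilon(t)\to y(t)$, which gives $\Delta(t)\le y(t)$ as claimed.
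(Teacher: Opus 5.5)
Your proof is correct and complete. The paper gives no argument of its own here and only refers to Appendix B of \cite{G-H}, so your $\varepsilon$-shifted first-crossing comparison stands as a self-contained proof. Its essential ingredients are all in place: the uniform gap $y_\varepsilon - x \ge \varepsilon e^{-\kappa t_*/2}$ on $[0,t_*]$, which comes from the exact identity $y_\varepsilon = y_\varepsilon(0)E$, and the uniform one-sided bound $\dot x \le 3\kappa$, which lets you continue past $t_*$. That bound is what is really needed, because $\Delta$, being a supremum over $\Sigma$, need not be continuous.
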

\begin{proof}
We refer to the same arguments as in Appendix B in \cite{G-H} for a proof. 
\end{proof}

\begin{Thm} \label{T5.1}
Suppose that coupling strength and initial datum satisfy
\begin{equation*} \label{E-11-1}
\kappa > 0, \quad  \rho^0\in\cP(\bS\fH), \quad {\mathcal D}[\rho^{0}] < \frac{1}{2}, \quad 
\end{equation*}  
and let $\rho$ be a Lagrangian weak solution to \eqref{E-1}. Then ${\mathcal D}[\rho]$ decays to zero exponentially fast as $t \to \infty$:
\[    
{\mathcal D}[\rho(t)]  \leq  \frac{\mathcal D[\rho^0]}{(1-2\mathcal D[\rho^0])e^{\frac{\kappa}{2} t}+2\mathcal D[\rho^0]}, \quad t \geq 0. 
\]
\end{Thm}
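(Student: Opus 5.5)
We combine Lemma \ref{L5.2} with the comparison Lemma \ref{L5.3}, applied with $\Si$ the set of initial pairwise distances in $\mathrm{supp}\,\rho^0$. Then we solve the Riccati equation explicitly. Throughout, $\rho(t)={\mathcal S}(t)\#\rho^0$ is the Lagrangian solution, and $\mathrm{supp}\,\rho(t)={\mathcal S}(t)(\mathrm{supp}\,\rho^0)$ by \eqref{E-4-1}. By Lemma \ref{L5.1}, the unit mass condition \eqref{E-3-1} holds, so Lemma \ref{L5.2} applies.

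\textbf{Step 1 (reduction to pairs).} For $(\phi_1^0,\phi_2^0)\in(\mathrm{supp}\,\rho^0)^2$, set $x(t)=\|{\mathcal S}(t)\phi_1^0-{\mathcal S}(t)\phi_2^0\|_\fH$. Then $0\le x\le 2$. By Lemma \ref{L5.2},
\[
\dot x\le \kappa\left(-\tfrac12+{\mathcal D}[\rho(t)]\right)x,
\]
understood in the time-integrated sense, or through the squared quantity where $x$ vanishes. Take $\Si=\{\|\phi_1^0-\phi_2^0\|_\fH : \phi_1^0,\phi_2^0\in\mathrm{supp}\,\rho^0\}$. Then $\sup\Si={\mathcal D}[\rho^0]=\Dlt^0<\tfrac12$. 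Since ${\mathcal S}(t)$ is a homeomorphism carrying $\mathrm{supp}\,\rho^0$ onto $\mathrm{supp}\,\rho(t)$, the sup of $x(t)$ over initial pairs equals ${\mathcal D}[\rho(t)]=\Dlt(t)$. This is exactly the hypothesis of Lemma \ref{L5.3}, indexed over initial pairs instead of single values $x^0$, which is harmless. The degenerate case $\Dlt^0=0$ is trivial: $\rho^0$ is a Dirac mass, and $K(\phi,\phi)=0$ keeps it stationary.

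\textbf{Step 2 (Riccati comparison).} Lemma \ref{L5.3} gives ${\mathcal D}[\rho(t)]\le y(t)$, where
\[
\dot y=-\tfrac\kappa2 y+\kappa y^2,\qquad y(0)={\mathcal D}[\rho^0].
\]
To solve it, set $u=1/y$, which gives $\dot u=\tfrac\kappa2 u-\kappa$. Hence $u(t)=2+(u(0)-2)e^{\kappa t/2}$, and so
\[
y(t)=\frac{{\mathcal D}[\rho^0]}{(1-2{\mathcal D}[\rho^0])e^{\kappa t/2}+2{\mathcal D}[\rho^0]}.
\]
The denominator is positive precisely because ${\mathcal D}[\rho^0]<\tfrac12$, so $y$ is global, decreasing, and decays like $e^{-\kappa t/2}$. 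This yields the claimed bound.

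\textbf{Main obstacle.} The delicate point is inside Lemma \ref{L5.3}, which is cited from \cite{G-H}. Its proof must show that $\Dlt(t)$, a supremum of $C^1$ functions, obeys the differential inequality in a Dini-derivative sense, and that it stays below $y$. I would argue by contradiction. Let $t_*$ be the first time with $\Dlt(t_*)=y(t_*)+\varepsilon$ for a small perturbed comparison function. For every pair, the bound $\Dlt\le y+\varepsilon$ on $[0,t_*]$ gives $x(t)\le x^0 e^{\kappa\int_0^t(-1/2+y+\varepsilon)}$ by Gronwall. Taking the sup over pairs, $\Dlt(t_*)\le \Dlt^0e^{\kappa\int_0^{t_*}(-1/2+y+\varepsilon)}$. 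The right-hand side is strictly less than $y(t_*)+\varepsilon$ for the $\varepsilon$-perturbed Riccati solution, which is a contradiction. Letting $\varepsilon\to0$ gives the bound. This uniform-in-pairs Gronwall argument avoids differentiating a supremum.
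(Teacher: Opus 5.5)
Your proposal is correct and follows the paper's proof: Lemma~\ref{L5.2} for pairs of characteristics issued from $\mathrm{supp}\,\rho^0$, then the comparison Lemma~\ref{L5.3} with $\Dlt(t)={\mathcal D}[\rho(t)]$, then the explicit solution of the Riccati equation; indexing $\Si$ by initial pairs and treating ${\mathcal D}[\rho^0]=0$ separately usefully fill in points the paper leaves implicit. One caution on your optional sketch of Lemma~\ref{L5.3}: with the additive barrier $y+\varepsilon$, your Gronwall bound equals $y(t_*)e^{\ka\varepsilon t_*}$, which need not lie below $y(t_*)+\varepsilon$ when ${\mathcal D}[\rho^0]$ is close to $\tfrac12$, so either use as barrier the Riccati solution $y_\varepsilon$ started from $\Dlt^0+\varepsilon$ (the bound then becomes $\tfrac{\Dlt^0}{\Dlt^0+\varepsilon}\,y_\varepsilon(t_*)<y_\varepsilon(t_*)$), or simply note that $z(t):=\Dlt^0\exp\bigl(\ka\int_0^t(-\tfrac12+\Dlt(s))\,ds\bigr)$ satisfies $\Dlt\le z$, $\dot z\le\ka(-\tfrac12+z)z$ and $z(0)=y(0)$, hence $z\le y$ by scalar ODE comparison.
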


\begin{proof} It follows from \eqref{E-7} that 
\[ \frac{d}{dt}  \|\phi_1(t) - \phi_2(t) \|_{\fH} \leq  \kappa \Big(  -\frac{1}{2} + {\mathcal D}[\rho(t)] \Big)\| \phi_1(t) - \phi_2(t) \|_{\fH}, \quad t > 0. \]
In Lemma \ref{L5.3}, we set
\[ 
x(t) =  \| \phi_1(t) - \phi_2(t) \|_{\fH}, \quad  {\mathcal D}[\rho(t)] = \Delta(t). 
\]
Then, we have
\begin{equation} \label{E-13-1}
{\mathcal D}[\rho(t)] \leq y(t), 
\end{equation}
where $y = y(t)$ is a solution of the following Riccati equation:
\begin{equation} \label{E-14}
\begin{cases}
\displaystyle \dot y= -\frac{\ka}{2} y+  \ka y^2,\quad t > 0,\\
\displaystyle y \Big|_{t = 0} ={\mathcal D}[\rho^0].
\end{cases}
\end{equation}
Then, it is easy to see that $y = y(t)$ is explicitly given by 
\begin{equation} \label{E-15}
y(t) = \frac{{\mathcal D}[\rho^0]}{2 {\mathcal D}[\rho^0] + (1-2 {\mathcal D}[\rho^0]) e^{\frac{\kappa}{2} t}}.
\end{equation}
Finally, we combine \eqref{E-13-1} and \eqref{E-15} to get the desired estimate. 
\end{proof}


\section{Practical synchronization of the kinetic SL equation} \label{sec:6}
\setcounter{equation}{0}

In this section, we study the emergent dynamics for the kinetic SL equation in the case of non-identical Hamiltonians and zero Hartree potential with $\omega \equiv 0.$
Compared to the previous section, we may not be able to derive the complete synchronization, but we instead obtain a weaker concept of synchronization, namely "{\it practical synchronization}" which can control the size of asymptotic $\phi$-diameter ${\mathcal D}[f_\kappa] := {\mathcal D}[\rho_\kappa]$ by increasing the coupling strength as we want (see Definition \ref{D6.1} in Section \ref{sec:6.2}).

\subsection{Particle path and diameter functional} \label{sec:6.1}

Recall the kinetic SL equation for the time-dependent density function $f = f(t, \cdot, \cdot)$ on $\bS\fH \times C(\bbt^d)$:
\begin{equation} \label{F-0}
\d_tf+\Div_\phi (f Y^f) +\frac{\kappa}{2} \Div_\phi\left(f \int_{\bS\fH \times C(\bbt^d)} K(\phi, \tilde\phi) f(t, d{\tilde \phi}d{\tilde V})\right)=0.
\end{equation}
Following the same argument as in Section \ref{sec:5.1}, we consider a particle path (forward characteristics) associated with \eqref{F-0}:~for $(\phi^0, V^0) \in \bS\fH \times C(\bbt^d)$, the particle path 
\[ (\phi(t|\phi^0), V(t|V^0)) := (\phi(t| 0, \phi^0, V^0), V(t| 0, \phi^0, V^0)) \]
is defined as the unique solution for the following Cauchy problem:
\begin{equation}\label{F-1}
\begin{cases}
\displaystyle \partial_t\phi(t|\phi^{0})=Y^f(t,\phi(t|\phi^{0}),V(t|V^0))+\tfrac\kappa2Y^{mf}[\rho_f](t,\phi(t|\phi^{0}))\,, \quad t > 0, \\
\displaystyle \partial_t V(t| V^0) = 0, \\
\displaystyle \phi(0|\phi^{0})=\phi^{0}, \quad V(0|V^{0})= V^{0},
\end{cases}
\end{equation}
where the vector fields $Y^f$ and $Y^{mf}$ are introduced in \eqref{DiffEq-phi} and \eqref{New-1-1}, respectively. The unique solvability of \eqref{F-1} is guaranteed by Theorem \ref{T-MFFlow} and it is easy to see that 
\[ 
V(t|V^0) = V^0, \quad f = (\phi(t|\cdot), V(t|V^0)) \# f^0.
\]
Now, we denote 
\[ \pi_1:\,\bS\fH\times C(\bbt^d)\ni(\phi,V)\mapsto\phi\in\bS\fH \quad \mbox{and} \quad \rho_f(t):=\pi_1\#f(t). \]
In other words
\[ 
\rho_f(t, \phi):= \int_{C(\bbt^d)} f(t, d\phi dV),
 \]
together with
\begin{equation} \label{DefD(t)}
\mathcal D[\rho(t)] := \sup \Big \{ \| \phi_1 - \phi_2 \|_{\fH}~\Big|~\phi_1, \phi_2 \in \mbox{supp} (\rho_f(t, \cdot)) \Big \}, 
\end{equation}
and 
\begin{equation} \label{Defa}
\alpha := \tfrac1\hbar\sup \Big \{ \|V_1 - V_2 \|_{L^{\infty}}~|~(\phi_1, V_1)~\mbox{and}~(\phi_2, V_2) \in \mbox{supp}(f(t, \cdot)) \Big \}. 
\end{equation}
Since the $V$-component of the vector field associated with \eqref{F-0} is identically zero, the quantity $\alpha$ is independent of $t$.


\subsection{Practical synchronization} \label{sec:6.2}

In this subsection, we provide the practical synchronization estimate for the kinetic SL equation \eqref{F-0}. Assume that the initial datum $f^0$ has a compact $V$-support:
\[ P_\infty :=  \sup \Big \{ \|V \|_{L^{\infty}}~\Big |~(\phi, V)  \in \mbox{supp}(f^0(\cdot)) \Big \} < \infty.  \]
Since $V$-variable in $f$ is stationary, we also have
\begin{equation} \label{F-2}
\sup \Big \{ \|V \|_{L^{\infty}}~\Big |~(\phi, V)  \in \mbox{supp}(f(t,\cdot)) \Big \} =  P_\infty  < \infty.  
\end{equation}
We choose $(\phi_1^0, V_1)$ and $(\phi_2^0, V_2)$ in $\mbox{supp}(f^0)$, and we set
\[ 
\phi_1(t) = S[V_1](t) \phi_1^0, \quad   \phi_2(t) = S[V_2](t) \phi_2^0,
\]
where $S[V](t)$ is the solution operator generated by \eqref{F-1} with $V^0 = V$, i.e., $\phi_1$ and $\phi_2$ satisfy
\begin{equation} \label{F-5}
\begin{cases}
\displaystyle  {\dot \phi}_1(t) = Y^f(t, \phi_1(t), V_1) + \frac{\kappa}{2} \int_{\bS\fH} K(\phi_1(t), \tilde \phi) \rho_f(t, d\tilde \phi), 
\\
\displaystyle  {\dot \phi}_2(t) = Y^f(t, \phi_1(t), V_2) + \frac{\kappa}{2} \int_{\bS\fH} K(\phi_2(t), \tilde \phi) \rho_f(t, d\tilde \phi).
\end{cases}
\end{equation}
Next, we study the time-rate of change for $\| \phi_1(t) - \phi_2(t) \|_{\fH}$. For this, we use $\eqref{F-5}$ to get 
\begin{align*}
\begin{aligned}
\frac{d}{dt} (\phi_1(t) - \phi_2(t)) &= \Big( Y^f(t, \phi_1(t), V_{1}) - Y^f(t, \phi_2(t), V_{2})  \Big)
\\
&+  \frac{\kappa}{2} \int_{\bS\fH} \Big( K(\phi_1(t), {\tilde \phi})  - K(\phi_2(t), {\tilde \phi})  \Big) \rho_f(t, d{\tilde \phi})
\\
&=: {\mathcal I}_{41}(t) + {\mathcal I}_{42}(t).
\end{aligned}
\end{align*} 
This yields
\begin{align}
\begin{aligned} \label{F-7}
\frac{d}{dt} \| \phi_1(t) - \phi_2(t) \|_{\fH}^2 &=  \Big \langle  {\mathcal I}_{41}(t) \Big | \phi_1(t) - \phi_2(t) \Big \rangle +  \Big \langle \phi_1(t) - \phi_2(t)  \Big |   {\mathcal I}_{41}(t) \Big \rangle \\
&+ \Big \langle  {\mathcal I}_{42}(t) \Big | \phi_1(t) - \phi_2(t)  \Big \rangle + \Big \langle \phi_1(t) - \phi_2(t) \Big |   {\mathcal I}_{42}(t)  \Big \rangle.
\end{aligned}
\end{align}
In the following lemma, we provide estimates for the right-hand side of \eqref{F-7}.
\begin{Lem} \label{L6.1}
Let $(\phi_1(t))$ and $(\phi_2(t))$ be the projected particle paths satisfying \eqref{F-5}. Then, one has the following estimates:~for $t \geq 0$, 
\begin{eqnarray*}
&& (i)~ \Big| \Big \langle  {\mathcal I}_{41} (t) \Big | \phi_1(t) - \phi_2(t) \Big \rangle + \Big \langle \phi_1(t) - \phi_2(t) \Big |   {\mathcal I}_{41}(t) \Big \rangle \Big| 
\\
&& \hspace{1cm} \leq \tfrac2\hbar \|V_{1}\|_{L^\infty} \|\phi_1(t)- \phi_2(t) \|^2_\fH+\tfrac2\hbar\|V_{1}-V_{2} \|_{L^\infty}  \|\phi_1(t)- \phi_2(t) \|_\fH.   
\cr
&& (ii)~ \Big \langle  {\mathcal I}_{42}(t)  \Big | \phi_1(t) - \phi_2(t) \Big \rangle + \Big \langle \phi_1(t) - \phi_2(t) \Big |   {\mathcal I}_{42}(t)  \Big \rangle 
\cr
&& \hspace{1cm} =  \kappa \Big(  -1 +  2{\mathcal D}[\rho_f(t)] \Big)\| \phi_1(t) - \phi_2(t) \|_{\fH}^2. 
\end{eqnarray*}
\end{Lem}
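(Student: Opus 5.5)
The plan is to treat the two assertions separately. For (i) I will use the explicit form of $Y^f$ with $\omega\equiv 0$, together with the self-adjointness of multiplication by a real potential and the unitarity of $U(\hbar t)$. For (ii) I will repeat verbatim the computation in the proof of Lemma \ref{L5.2}, now with $\rho$ replaced by $\rho_f(t)$. Throughout, write $w:=\phi_1(t)-\phi_2(t)$ and use $\|\phi_1(t)\|_\fH=\|\phi_2(t)\|_\fH=1$ (Theorem \ref{T-MFFlow} keeps the characteristics on $\bS\fH$). In the second line of \eqref{F-5} I read $Y^f(t,\phi_2(t),V_2)$; the $\phi_1$ there appears to be a misprint.

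For (i): since $\omega\equiv 0$, I split
\[
\mathcal I_{41}=\tfrac1{\mathrm i\hbar}U(\hbar t)^*V_1U(\hbar t)\,w+\tfrac1{\mathrm i\hbar}U(\hbar t)^*(V_1-V_2)U(\hbar t)\,\phi_2 .
\]
The left-hand side of (i) equals $2\RE\la\mathcal I_{41}|w\ra$.

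For the first piece, Cauchy--Schwarz and unitarity bound its contribution by $\tfrac2\hbar\|V_1\|_{L^\infty}\|w\|^2_\fH$. In fact this contribution vanishes, because $U^*V_1U$ is self-adjoint and so $\tfrac1{\mathrm i\hbar}\la U^*V_1Uw|w\ra$ is purely imaginary; the cruder bound is all the statement needs.

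For the second piece, Cauchy--Schwarz together with $\|U(\hbar t)\phi_2\|_\fH=1$ gives the bound $\tfrac2\hbar\|V_1-V_2\|_{L^\infty}\|w\|_\fH$. Adding the two bounds yields (i).

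For (ii): the key identity is $K(\phi_1,\tilde\phi)-K(\phi_2,\tilde\phi)=\la\phi_2|\tilde\phi\ra\phi_2-\la\phi_1|\tilde\phi\ra\phi_1$. With it, the same algebra as in \eqref{E-9} gives
\[
2\RE\la\mathcal I_{42}|w\ra=\kappa\RE\int_{\bS\fH}\mathcal I_3\,\rho_f(t,d\tilde\phi).
\]
I then apply the decomposition \eqref{E-10}, $\mathcal I_3=\mathcal I_{31}+\mathcal I_{32}+\mathcal I_{33}+\mathcal I_{34}$, and bound each piece as in \eqref{E-11}--\eqref{E-13}:
\[
\mathcal I_{31}=-\|w\|^2_\fH,\qquad \mathcal I_{33}=-|\la w|\tilde\phi\ra|^2\le 0,
\]
\[
|\mathcal I_{32}|+|\mathcal I_{34}|\le\bigl(\|\tilde\phi-\phi_2\|_\fH+\|\phi_1-\tilde\phi\|_\fH\bigr)\|w\|^2_\fH .
\]
Next I integrate against $\rho_f(t)$. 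This measure has unit mass, since $\rho_f(t)=\pi_1\#f(t)$ and $f(t)$ is a push-forward of the probability measure $f^0$. For $\tilde\phi$, $\phi_1(t)$, $\phi_2(t)$ all in $\mathrm{supp}\,\rho_f(t)$, each distance is bounded by $\mathcal D[\rho_f(t)]$. The support property follows from $\rho_f(t)=\phi(t|\cdot)\#\rho_{f^0}$ as in \eqref{E-4-1}, with the paths issued from $\mathrm{supp}\, f^0$. This gives the bound $\kappa(-1+2\mathcal D[\rho_f(t)])\|w\|^2_\fH$.

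The main obstacle is (ii) as written. The argument only produces an upper bound: the nonpositive term $-\kappa\int|\la w|\tilde\phi\ra|^2\rho_f$ is discarded, and the absolute-value bounds on $\mathcal I_{32},\mathcal I_{34}$ are not sharp. So what I will actually prove is "$\le$" in place of the displayed "$=$". This is exactly the direction that feeds into the differential inequality for $\|\phi_1-\phi_2\|^2_\fH$ and, subsequently, for $\mathcal D[\rho_f]$ in the practical-synchronization argument. The only genuinely new check beyond Lemma \ref{L5.2} is that the $V$-dependence does not enter $\mathcal I_{42}$, which is immediate because $K$ depends on $\phi$ alone.
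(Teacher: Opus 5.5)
Your proof follows the paper's own argument. For (i) you use the same splitting of $\mathcal I_{41}$, followed by Cauchy--Schwarz and the unitarity of $U(\hbar t)$; for (ii) you rerun the decomposition \eqref{E-10}--\eqref{E-13} from Lemma \ref{L5.2} and integrate it against $\rho_f(t)$. You are also right that this yields only ``$\le$'' in (ii): the paper's own chain likewise ends in an inequality, the displayed equality fails in general, and ``$\le$'' is exactly the direction used in \eqref{F-7-1}.
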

\begin{proof} 
(i)~First, we note that 
\begin{align}
\begin{aligned} \label{NF-7}
{\mathcal I}_{41}(t) &=  Y^f(t,\phi_1(t),V_1)-Y^f(t, \phi_2(t), V_2) \\
&= \tfrac1{{\mathrm i}\hbar}U(\hbar t)^* V_1 U(\hbar t)(\phi_1(t)- \phi_2(t))
+ \tfrac1{{\mathrm i}\hbar}U(\hbar t)^*(V_1- V_2)U(\hbar t) \phi_2(t).
\end{aligned}
\end{align}
Then, \eqref{NF-7}, $\| \phi_2(t) \|_{\fH} = 1$ and the Cauchy-Schwarz inequality imply
\begin{equation} \label{F-7-0}
\| {\mathcal I}_{41}(t) \|_\fH \le
\tfrac1{\hbar} \|V_1\|_{L^\infty} \|\phi_1(t)- \phi_2(t) \|_\fH
+ \tfrac1\hbar\|V_1- V_2\|_{L^\infty}.
\end{equation}
Now, we use the Cauchy-Schwartz inequality and \eqref{F-7-0} to find the desired estimate:
\begin{align*}
\begin{aligned}
& \Big| \Big \langle  {\mathcal I}_{41}(t)  \Big | \phi_1(t) - \phi_2(t) \Big \rangle + \Big \langle \phi_1(t) - \phi_2(t) \Big |   {\mathcal I}_{41}(t) \Big \rangle \Big|  \\
& \hspace{1cm}  \leq 2 \| {\mathcal I}_{41}(t) \|_{\fH} \| \phi_1(t) - \phi_2(t)  \|_{\fH} \\
& \hspace{1cm} \leq \tfrac2\hbar \|V_{1}\|_{L^\infty} \|\phi_1(t)- \phi_2(t)\|^2_\fH+\tfrac2\hbar\|V_{1}-V_{2}\|_{L^\infty} \|\phi_1(t)- \phi_2(t)\|_\fH. 
\end{aligned}
\end{align*}
\noindent (ii) We use the same arguments as in Lemma \ref{L5.2} to see that 
\begin{align*}
\begin{aligned}
& \Big \langle  {\mathcal I}_{42}(t) \Big | \phi_1(t) - \phi_2(t) \Big \rangle + \Big \langle \phi_1(t) - \phi_2(t) \Big |   {\mathcal I}_{42}(t)  \Big \rangle  \\
& \hspace{1cm} = \kappa \Big[  -1 +  \int_{\mbox{supp} \rho_f(t)} \Big( \| \tilde\phi - \phi_2(t) \|_{\fH}  +   \|  \phi_1(t) - {\tilde \phi} \|_{\fH}  \Big )\rho_f(t,d{\tilde \phi}) \Big ]  \| \phi_1(t) - \phi_2(t) \|_{\fH}^2 \\
&\hspace{1cm} \leq \kappa \Big(  -1 +  2{\mathcal D}[\rho_f(t)] \Big)\| \phi_1(t) - \phi_2(t) \|_{\fH}^2.
\end{aligned}
\end{align*}
\end{proof}
Now, we combine \eqref{F-7} and Lemma \ref{L6.1} to find 
\begin{align}
\begin{aligned} \label{F-7-1}
\frac{d}{dt} \| \phi_1(t) - \phi_2(t) \|_{\fH}^2 &\leq   \tfrac2\hbar \|V_{1}\|_{L^\infty} \|\phi_1(t)- \phi_2(t) \|^2_\fH+\tfrac2\hbar\|V_{1}-V_{2} \|_{L^\infty} \|\phi_1(t)- \phi_2(t) \|_\fH \\
&+  \kappa \Big(  -1 +  2{\mathcal D}[\rho_f(t)] \Big)\| \phi_1(t) - \phi_2(t) \|_{\fH}^2.
\end{aligned}
\end{align}
Then, we use \eqref{F-2} and \eqref{F-7-1}  to find 
\begin{align}
\begin{aligned} \label{F-7-2}
&\frac{d}{dt} \| \phi_1(t) - \phi_2(t) \|_{\fH} \\
& \hspace{0.5cm} \leq \tfrac1\hbar\|V_{1}-V_{2} \|_{L^\infty} + \Big[   \tfrac1\hbar \|V_{1}\|_{L^\infty} - \frac{\kappa}{2}   +   \kappa {\mathcal D}[\rho_f(t)] \Big ]  \| \phi_1(t) - \phi_2(t) \|_{\fH} \\
& \hspace{0.5cm} \leq \alpha + \Big[   \tfrac1\hbar P_\infty - \frac{\kappa}{2}   +   \kappa {\mathcal D}[\rho_f(t)] \Big ]  \| \phi_1(t) - \phi_2(t) \|_{\fH}.
\end{aligned}
\end{align}
We choose $\kappa$ sufficiently large such that 
\begin{equation} \label{F-7-3}
\tfrac1 \hbar P_\infty  -\frac{\kappa}{2}   < -\frac{\kappa}{4}, \quad \mbox{i.e.,} \quad   \kappa > \frac{4 P_\infty}{\hbar}.     
\end{equation}
Then, for such $\kappa$, we use \eqref{Defa}, \eqref{F-7-2} and \eqref{F-7-3} to find 
\begin{equation} \label{F-8}
\frac{d}{dt} \| \phi_1(t) - \phi_2(t) \|_{\fH} \leq  \alpha+ \kappa  \Big(  -\frac{1}{4}  +  {\mathcal D}[\rho_f(t)] \Big)\| \phi_1(t) - \phi_2(t) \|_{\fH}.
\end{equation}
Before we proceed further, we recall the concept of practical synchronization for solutions of the kinetic SL equation \eqref{F-0}, already introduced in Definition \ref{D2.1} in the case of the finite system.
\begin{Def}\label{D6.1}
Let $\{f_\kappa \}$ be a family of solutions to \eqref{F-0} indexed by the coupling strength $\kappa$, and let $\cD[f_\kappa]$ be the sequence of diameters of $\rho_{f_\kappa}$ defined in \eqref{DefD(t)}. This family of solutions exhibits ``practical synchronization''
if the following relation holds:
\[
\lim_{\ka\to \infty}\varlimsup_{t\to \infty} {\mathcal D}[f_\kappa(t)]=0.
\]
\end{Def}
\vspace{0.2cm}

\noindent In order to get an estimate for the diameter ${\mathcal D}_\kappa[t]$, we use the same argument employed in \cite{G-H}. More precisely, for $\eta\in[0, \frac{1}{64})$, we consider the equation motivated by the right-hand side of \eqref{F-8}:
\begin{equation} \label{NNN-1}
 z^2- \frac{1}{4}z+\eta=0
\end{equation}
Then, it has two nonnegative roots $\xi_1(\eta)$ and $\xi_2(\eta)$ such that
\begin{equation}\label{DefXi2}
0\le\xi_1(\eta)< \frac{1}{8} <\xi_2(\eta)\le 2.
\end{equation}
Therefore, for all $z\ge 0$, one has
\[
z^2-\frac{1}{4} z+\eta<0 \quad \hbox{ if and only if } \quad \xi_1(\eta)<z<\xi_2(\eta).
\]
Note that $\xi_1$ and $\xi_2$ are of class $C^1$ on $[0,\tfrac{1}{64})$ by the implicit functions theorem, and that
\begin{equation} \label{F-10}
\xi_1(0)=0\quad\hbox{ and }\quad\xi_2(0)= \frac{1}{4}.
\end{equation}
\begin{Lem}\label{L6.2}
For a subset $\Si \subset[0, \xi_2(\a/(\ka)))$, we assume that coupling strength and initial data satisfy 
\[ 
0 < \alpha < \infty, \quad \ka>  64 \a > 0, \quad 0<\Dlt^0<\xi_2(\a/(\ka)), 
\]
and let $x\in C^1([0,+\infty)$ be a function satisfying the following relations:
\[
\begin{cases}
\displaystyle 0 \le x(t)\le 2, \quad t \geq 0, \\
\displaystyle \dot x(t)\le \a - \frac{\kappa}{4} x(t) + \ka \Dlt(t) x(t),  \quad t > 0, \\
\displaystyle  x \Big|_{t = 0} =x^0, \quad \Dlt(t):=\sup_{x^0\in\Si}x(t), \quad  t \geq 0.
\end{cases}
\]
Then we have
\[
\Dlt(t)\le y(t)\quad\hbox{ for each }t\ge 0,
\]
where $y$ is the solution of the Cauchy problem for the Riccati equation:
\[
\begin{cases}
\displaystyle \dot y= \alpha -\frac{\ka}{4} y+  \ka y^2,\quad t > 0, \\
\displaystyle y \Big|_{t = 0} =\Dlt^0.
\end{cases}
\]
\end{Lem}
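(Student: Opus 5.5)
We will argue by a comparison (barrier) argument for the family $\{x(\cdot;x^0)\}_{x^0\in\Si}$, following the scheme of Appendix B in \cite{G-H} and of Lemma \ref{L5.3}. First we record the qualitative behaviour of the Riccati solution $y$. Since $\ka>64\a$, we have $\eta:=\a/\ka\in[0,\tfrac1{64})$. The right-hand side $\a-\tfrac\ka4y+\ka y^2=\ka(y^2-\tfrac14y+\eta)$ is negative exactly on $(\xi_1(\eta),\xi_2(\eta))$. Hence for $\Dlt^0<\xi_2(\eta)$ the solution $y$ exists globally, and $y(t)<\xi_2(\eta)$ for all $t$. More precisely, $y$ is monotone and converges to $\xi_1(\eta)$, or it stays at $\xi_1(\eta)$ if it starts there. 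In particular $0\le y(t)\le\max(\Dlt^0,\xi_1(\eta))<\xi_2(\eta)\le 2$. We then fix $\varepsilon>0$ small and let $y_\varepsilon$ solve the perturbed problem $\dot y_\varepsilon=\a+\varepsilon-\tfrac\ka4y_\varepsilon+\ka y_\varepsilon^2$ with $y_\varepsilon(0)=\Dlt^0+\varepsilon$. By continuous dependence, $y_\varepsilon\to y$ locally uniformly as $\varepsilon\to0$, and for $\varepsilon$ small $y_\varepsilon$ still exists on any fixed $[0,T]$.

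The core step is to show $x(t;x^0)<y_\varepsilon(t)$ for every $x^0\in\Si$ and every $t\in[0,T]$. At $t=0$ this holds, since $x^0\le\Dlt^0<y_\varepsilon(0)$. Suppose it fails, and let $t_*$ be the first time at which $\Dlt(t_*)=y_\varepsilon(t_*)$. This is the infimum of the times at which some $x(\cdot;x^0)$ reaches $y_\varepsilon$. On $[0,t_*]$ we have $\Dlt\le y_\varepsilon$, so each $x=x(\cdot;x^0)$ satisfies
\[
\dot x\le \a-\tfrac\ka4 x+\ka y_\varepsilon(t)x .
\]
This is a \emph{linear} differential inequality in $x$ with a known coefficient. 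The function $y_\varepsilon$ is a strict supersolution of the same linear equation: $\dot y_\varepsilon=\a+\varepsilon-\tfrac\ka4 y_\varepsilon+\ka y_\varepsilon\cdot y_\varepsilon$. We then set $w=y_\varepsilon-x$. It satisfies $\dot w\ge \varepsilon+(-\tfrac\ka4+\ka y_\varepsilon)w$ and $w(0)\ge\varepsilon$. Gronwall's lemma gives a lower bound $w(t)\ge c(\varepsilon,T)>0$ on $[0,t_*]$, and this bound is \emph{uniform in} $x^0\in\Si$, because the coefficient $-\tfrac\ka4+\ka y_\varepsilon$ does not depend on $x^0$. Taking the supremum over $x^0$ yields $\Dlt(t_*)\le y_\varepsilon(t_*)-c<y_\varepsilon(t_*)$, which contradicts the definition of $t_*$.

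The main obstacle is exactly this step. $\Dlt$ is only a supremum of $C^1$ functions, so it need not be differentiable or even continuous. For this reason we avoid differentiating $\Dlt$, and we also avoid a naive ``first touching time'' argument for a single trajectory. Instead, the uniform Gronwall margin $c(\varepsilon,T)$ does the work: we define $t_*$ as the supremum of the times $s$ with $\Dlt\le y_\varepsilon$ on $[0,s]$. The uniform margin then shows that strict inequality propagates slightly beyond $t_*$, using the fact that the $x(\cdot;x^0)$ are equi-Lipschitz. This follows from $0\le x\le2$ and the differential inequality, which bound $\dot x$ from above uniformly; if a two-sided bound is needed we would assume it as in \cite{G-H}. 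Finally, letting $\varepsilon\to0$ and then $T\to\infty$ gives $\Dlt(t)\le y(t)$ for all $t\ge0$.
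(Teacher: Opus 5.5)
Your proposal is correct and follows essentially the comparison argument the paper defers to (Appendix B of \cite{G-H}). You use a strict Riccati barrier $y_\varepsilon$, a Gronwall margin for $y_\varepsilon-x$ that is uniform in $x^0\in\Si$ (using $x\ge 0$ to replace $\ka\Dlt x$ by $\ka y_\varepsilon x$), a continuation past the last time at which $\Dlt\le y_\varepsilon$, and finally $\varepsilon\to0$. The continuation step needs only the one-sided bound $\dot x\le \a+\ka\Dlt x\le \a+4\ka$, which follows from $0\le x\le 2$, so you do not need to assume any two-sided Lipschitz bound.
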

\begin{proof}
We refer to the same arguments as in Appendix B in \cite{G-H} for a proof. 
\end{proof}
Now, we are ready to present the practical synchronization estimate.
\begin{Thm} \label{T6.1}
Suppose that the coupling strength, one-body potentials and initial data satisfy a set of conditions:
\[  P_\infty < \infty, \quad \alpha  < \infty, \quad   {\mathcal D}[f_\kappa^0] <\xi_2(\a/(\ka)), \quad \int_{\bS\fH} \rho_k^0(d\phi) = 1, 
\]
and let $\{f_\kappa \}$ be a family of solutions to \eqref{F-0}.  Then, one has the practical synchronization:
\[
\lim_{\ka\to \infty}\varlimsup_{t\to \infty} {\mathcal D}[f_\kappa(t)]=0.
\]
\end{Thm}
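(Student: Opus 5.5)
The proof will feed the differential inequality \eqref{F-8} into the comparison Lemma \ref{L6.2}. After that, it will use the explicit asymptotics of the Riccati equation $\dot y=\a-\frac\ka4 y+\ka y^2$ and the limit behaviour \eqref{F-10} of the roots of \eqref{NNN-1}.

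First, fix $\ka$ large enough that both \eqref{F-7-3} holds, i.e. $\ka>4P_\infty/\hbar$, and $\ka>64\a$. The second condition gives $\eta:=\a/\ka\in[0,\frac1{64})$, so the roots $\xi_1(\eta)<\frac18<\xi_2(\eta)$ are well defined. Since we are interested in $\ka\to\infty$, this restriction costs nothing.

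Take $\Si:=\{\|\phi_1^0-\phi_2^0\|_\fH : \phi_1^0,\phi_2^0\in\mbox{supp}\,\rho_{f_\ka^0}\}$. Then $\sup\Si=\cD[f^0_\ka]<\xi_2(\a/\ka)$ by assumption. For each such pair, let $x(t)=\|\phi_1(t)-\phi_2(t)\|_\fH\le 2$, where $\phi_i$ are the particle paths \eqref{F-5} issued from points of $\mbox{supp}\,f^0_\ka$.

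Because $f_\ka(t)$ is the push-forward of $f^0_\ka$ by the characteristic flow, the same argument as for \eqref{E-4-1} shows that $\mbox{supp}\,\rho_{f_\ka}(t)$ is the image of the initial support. Hence $\cD[\rho_{f_\ka}(t)]=\Dlt(t):=\sup x(t)$, strictly speaking after taking closures, which does not affect suprema by continuity of the flow. Inequality \eqref{F-8} then says exactly that $\dot x\le \a-\frac\ka4x+\ka\Dlt(t)x$, so Lemma \ref{L6.2} yields $\cD[f_\ka(t)]\le y(t)$, where $y(0)=\cD[f^0_\ka]$. One technical point needs care: $x$ may fail to be differentiable where it vanishes. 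I would handle this by working with $x^2$ and \eqref{F-7-1}, or with upper Dini derivatives, which the comparison argument of \cite{G-H} tolerates.

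Next, analyse the Riccati equation. Its right-hand side is $\ka\,(y^2-\frac14y+\a/\ka)=\ka(y-\xi_1)(y-\xi_2)$, with $\xi_i=\xi_i(\a/\ka)$. If $y(0)\in(\xi_1,\xi_2)$, then $y$ decreases monotonically and converges to the stable equilibrium $\xi_1$. If $y(0)\le\xi_1$, then $y$ increases to $\xi_1$, or is constant if $y(0)=\xi_1$. In all cases $y(t)\to\xi_1(\a/\ka)$ as $t\to\infty$, and in fact exponentially, via the explicit solution $\frac{y-\xi_1}{y-\xi_2}=\frac{y^0-\xi_1}{y^0-\xi_2}e^{-\ka(\xi_2-\xi_1)t}$. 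Therefore $\varlimsup_{t\to\infty}\cD[f_\ka(t)]\le\xi_1(\a/\ka)$.

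Finally, $\a$ is independent of $t$ (stationarity of $V$), and I take it to be bounded uniformly in $\ka$, since the potentials' support is fixed. Then $\a/\ka\to0$, and by continuity of $\xi_1$ together with $\xi_1(0)=0$ from \eqref{F-10} we get $\xi_1(\a/\ka)\to0$. Explicitly, $\xi_1(\eta)=\frac{1-\sqrt{1-64\eta}}{8}\le 8\eta$, which gives the rate $\varlimsup_{t\to\infty}\cD[f_\ka(t)]\le 8\a/\ka$.

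The main obstacle is the rigorous justification of the comparison step. This means identifying $\cD[f_\ka(t)]$ with the supremum over trajectories starting in the initial support, and checking the hypothesis $0<\Dlt^0$. The degenerate case $\cD[f^0_\ka]=0$ has to be treated separately, or by replacing $\Dlt^0$ with a small positive number below $\xi_2$. I would also point out that \eqref{F-8} is used with $\ka$ large, which is compatible with taking the limit $\ka\to\infty$.
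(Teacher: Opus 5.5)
Your proposal is correct and follows essentially the same route as the paper. The paper also takes $\kappa>\max\{4P_\infty/\hbar,\,64\alpha\}$, feeds \eqref{F-8} into Lemma~\ref{L6.2} to get $\mathcal D[f_\kappa(t)]\le y_\kappa(t)$, and then concludes from $y_\kappa(t)\to\xi_1(\alpha/\kappa)$ and $\xi_1(0)=0$. You additionally spell out points the paper leaves implicit: why the Riccati solution converges to $\xi_1(\alpha/\kappa)$, the explicit rate $\xi_1(\eta)\le 8\eta$, the need for $\alpha$ to be bounded uniformly in $\kappa$, and the technicalities about nondifferentiability where $x=0$ and the degenerate case $\mathcal D[f^0_\kappa]=0$.
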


\begin{proof} Since we deal with the case $\kappa \gg 1$, we can assume that 
\[ 
 \kappa > \max \Big \{ \frac{4P_\infty}{\hbar},~64 \alpha  \Big \}.
\]
We apply Lemma \ref{L6.2}  for \eqref{E-8} with the following set-up:
\[
 x(t) := \|\phi_1- \phi_2\|_\fH, \quad  \Delta(t) := {\mathcal D}[f_\kappa].
\]
to get 
\begin{equation} \label{F-11}
{\mathcal D}[f_\kappa(t)] \le y_\kappa(t) \quad\hbox{ for each }t\ge 0, 
\end{equation}
where $y_\kappa$ is the solution of the Cauchy problem:
\[
\begin{cases}
\displaystyle \dot y= \alpha -\frac{\ka}{4} y+  \ka y^2,\quad t > 0, \\
\displaystyle y \Big|_{t = 0} =  {\mathcal D}[f_\kappa^0].
\end{cases}
\]
Now, we combine \eqref{F-10} and \eqref{F-11} to get the desired practical synchronization:
\[ 
\lim_{\ka\to \infty}\varlimsup_{t\to \infty} {\mathcal D}[f_\kappa(t)] \leq  \lim_{\ka\to+\infty} \xi_1 \Big(\frac{\alpha}{\kappa} \Big) = 0.
\]
\end{proof}


\section{Conclusion} \label{sec:7}
\setcounter{equation}{0}


In this paper, we have derived the kinetic SL equation which turns out to be a Vlasov-McKean type equation on the phase space $\bS\fH \times C(\bbt^d)$, and we provided emergent synchronization estimates for the derived kinetic equation using the nonlinear functional approach measuring the diameter for the support of one-oscillator distribution. The SL model was originally introduced by M. Lohe as a non-abelian infinite-dimensional generalization of the Kuramoto model for synchronization and it can be easily reduced to several synchronization models, e.g., swarm sphere model on $d$-sphere and the Kuramoto model. The novelty of this paper is to derive the kinetic SL equation on the {\it infinite-dimensional} phase space $\bS\fH \times C(\bbt^d)$ together with quantitative fluctuation estimate between one-marginal distribution and one-oscillator distribution in 2-Wasserstein distance. As far as the authors know, the nonlinear transport equation on the infinite-dimensional phase space has not been introduced in kinetic community, although there are some literature 
\cite{A-F, B-D-R1, B-D-R2} on the parabolic type equations on the infinite-dimensional Hilbert spaces in stochastic PDE community. Thus, our work provides the first nontrivial mean-field kinetic model on the infinite-dimensional phase space. For the complete and practical synchronization estimates, we can still use the method of characteristics (or particle paths) on the infinite-dimensional phase space whose existence can be obtained using the classical Cauchy-Lipschitz theory on Banach space, and then by comparing the variations between two particle paths, we can derive a quantitative estimate of the projected diameter of Lagrangian weak solution in $\phi$-space. There are several interesting issues which have not treated in this paper. For example, we have not investigated  the global well-posedness for strong and classical solutions to the proposed kinetic equation and extensions of quantitative and qualitative estimates of the kinetic Kuramoto equation on a finite-dimensional phase space. Moreover, note that the proposed mean-field limit is valid only in a finite-time interval for any initial data. So the extension of this finite-time mean-field limit to a uniform mean-field limit might be also an interesting open problem. We leave these interesting issues for future work.



\newpage

\begin{appendix}


\section{The Cauchy-Lipschitz theory on Banach spaces}\lb{S-CL}

In this appendix, we recall a basic well-posedness theory of ordinary differential equation on Banach spaces. Since the Cauchy-Lipschitz theory for ordinary differential equations is based on Picard's iterations and the Banach fixed point theorem, it uses only completeness and can be stated in Banach spaces (finite dimension
plays no role in that theory). For reader's convenience, we recall the essential facts from Chapter II.1 in \cite{Cartan}. 

Let $\mathfrak X$ be a (separable) Banach space, and let $(a,b)$ be a (nonempty) open interval of $\mathbb R$. Let $V$ be a neighborhood of $(t_0,x_0)$ in $\mathbb R\times\mathfrak X$, and let $F:\,V\mapsto\mathfrak X$ 
be a continuous function. A continuous function $x:\,(a,b)\mapsto\mathfrak X$ is a solution to the follwoing Cauchy problem on the interval $(a,b)$:
\begin{equation} \lb{CauchyPbx0}
\begin{cases}
\displaystyle \dot x(t)=F(t,x(t))\,,\quad t > t_0, \\
\displaystyle  x \Big|_{t = t_0} =x_0,
\end{cases}
\end{equation}
for  $t_0\in (a,b)$ if and only if the map $(a, b)\ni t\mapsto x(t)\in\mathfrak X$ satisfies the following relations:
\[ 
(t,x(t)) \in V \quad \mbox{and} \quad x(t)=x_0+\int_{t_0}^tF(s,x(s))ds, \quad\text{ for all }t \in (a, b). 
\]
In particular, $x\in C^1((a,b); \mathfrak X)$, the differential equation in \eqref{CauchyPbx0} holds for all $t\in(a,b)$, and $x(t_0)=x_0$.

The function $F$ is said to be Lipschitz-continuous in $V$ uniformly in $t$, i.e. there exists $L\ge 0$ such that
\[
\|F(t,x)-F(t,y)\|\le L\|x-y\|
\] 
for all $t\in\mathbb R$ and $x,y\in\mathfrak X$ such that $(t,x)\in V$ and $(t,y)\in V$. This is the formula (1.5.1) in Chapter 2 of \cite{Cartan}.

Assume that $[t_0-T,t_0+T]\times\overline {B(x_0,r)}\subset V$ and that 
\be\lb{BoundHypF}
(t,x)\in [t_0-T,t_0+T]\times\overline {B(x_0,r)}\implies \|F(t,x)\|\le M\,.
\ee
If $F$ is Lipschitz-continuous in $V$ uniformly in $t$, there exists a unique solution 
\[
x:\,[t_0-\tau,t_0+\tau]\to\overline {B(x_0,r)}\subset\mathfrak X
\]
to the problem \eqref{CauchyPbx0}, defined on some time interval such that
\[ \lb{ExistTime}
\tau\ge\min \Big \{ T,\tfrac{r}M \Big \}\,.
\]
This is Corollary 1.7.2 in Chapter 2 of \cite{Cartan}. In particular, this solution is globally defined (for all $t\in \bbr$ and all $y\in\mathfrak X$) if \eqref{BoundHypF} holds with $T=r=+\infty$, i.e. if $F$ is continuous 
and bounded on $V=\bbr \times\mathfrak X$. 

If $V$ is open and $F$ is locally Lipschitz continuous on $V$ uniformly in $t$, and if moreover $x:\,[t_0,t_1)\mapsto\mathfrak X$ is a solution of \eqref{CauchyPbx0} such that $x(t)$ converges to a limit in $\mathfrak X$ as $t\to t_1^-$, 
this solution $x$ can be extended to $[t_0,t_2)$ with $t_2>t_1$. This is the end of Section 1.8 in Chapter 2 of \cite{Cartan}. Next we further assume that $V$ is open in $\bbr \times\mathfrak X$ and that $\partial F/\partial x$ exists and is continuous on $V$. 

Consider the Cauchy problem:
\begin{equation} \label{CauchyPby}
\begin{cases}
\displaystyle \dot x(t)=F(t,x(t))\,,\quad t > t_0, \\
\displaystyle  x \Big|_{t = t_0} =y.
\end{cases}
\end{equation}
For each $r'\in(0,r)$ and each $y\in B(x_0,r')$, the Cauchy problem \eqref{CauchyPby} has a unique solution 
\[
[t_0-\tau',t_0+\tau']\ni t\mapsto X(t,t_0;y)\in\overline{B(x_0,r)}
\]
where $\tau'\ge\min(T,\tfrac{r-r'}M)$. It satisfies the flow identity
\[ \lb{FlowProp}
X(t_2,t_1;X(t_1,t_0;y))=X(t_2,t_0;y)\,,\quad |t_2-t_1|+|t_1-t_0|\le\tfrac{r-r'}M.
\]
and the map $(t_2,t_1;y)\mapsto X(t_2,t_1;y)$ is of class $C^1$ on the domain defined by the inequalities:
\[ |y-x_0|<r \quad \mbox{and} \quad |t_2-t_1|+|t_1-t_0|<\tfrac{r-r'}M. \]
This can be found in Section 3.2 and Theorem 3.4.2 in Chapter 2 of \cite{Cartan}.


\vspace{0.5cm}

\section{The Liouville equation on $(\mathbf S\mathfrak H)^N\times C(\mathbb T^d)^N$}\lb{SLiouv}

In this appendix, we consider the solvability of the Liouville equation on the $N$-oscillator phase space $(\mathbf S\mathfrak H)^N\times C(\mathbb T^d)^N$. \newline

Recall that $\mathfrak H=L^2(\mathbb T^d)$ and that $\mathbf S\mathfrak H=\{\phi\in\mathfrak H\,:\,\|\phi\|_\mathfrak H=1\}$ is the unit sphere in $\mathfrak H$.  Note that the $N$-oscillator ($N$-particle) phase-space of the SL model is $(\mathbf S\mathfrak H)^N\times C(\mathbb T^d)^N$ which is a Polish space. In what follows, we denote by $\mathcal P(\mathcal Z)$ the set 
of Borel probability measures on the Polish space $\mathcal Z$, and we introduce the divergence of a vector field defined on the $N$-oscillator phase space and recall the concept of weak solution of the Liouville equation on the $N$-oscillator phase space.

First, we define the tangent space at any given phase space point. By the analogy with the case of the unit sphere in a finite-dimensional Euclidean space, the tangent space to the unit sphere of $\mathfrak H$ at a point 
$\psi\in\mathbf S\mathfrak H$ is the (closed) hyperplane:
\be\lb{TSH}
T_\psi\mathbf S\mathfrak H:= \Big \{\xi\in\mathfrak H:~\RE\la\psi|\xi\ra=0 \Big \}\,.
\ee
Hence the tangent space at $(\psi_1,\ldots,\psi_N,V_1,\ldots,V_N)$ of the $N$-oscillator phase space is
\be\label{TangSpace}
T_{(\psi_1,\ldots,\psi_N,V_1,\ldots,V_N)}(\bS\fH)^N\times C(\bbt^d)^N=T_{\psi_1}\mathbf S\mathfrak H\times\ldots\times T_{\psi_N}\mathbf S\mathfrak H\times C(\bbt^d)^N\,. 
\ee
A continuous vector field $X$ on $(\bS\fH)^N\times C(\bbt^d)^N$ is a continuous section of the tangent bundle $T((\bS\fH)^N\times C(\bbt^d)^N)$, i.e. a continuous mapping
\[
(\bS\fH)^N\times C(\bbt^d)^N\ni(\psi_1,\ldots,\psi_N,V_1,\ldots,V_N)\mapsto X(\phi_1,\ldots,\phi_N,V_1,\ldots,V_N)\in\mathfrak H^N\times C(\mathbb T^d)^N
\]
such that
\[
X(\psi_1,\ldots,\psi_N,V_1,\ldots,V_N)\in T_{(\psi_1,\ldots,\psi_N,V_1,\ldots,V_N)}(\bS\fH)^N\times C(\bbt^d)^N
\]
for all $(\psi_1,\ldots,\psi_N,V_1,\ldots,V_N)\in(\bS\fH)^N\times C(\bbt^d)^N$. \newline

Next we define a notion of divergence of a vector field on the $N$-oscillator phase space.

\begin{Def}\label{D-B.1} 
Let $X$ be a continuous and bounded ($C_b$-) vector field on $(\bS\fH)^N\times C(\bbt^d)^N$, and let $F_N\in\mathcal P((\bS\fH)^N\times C(\bbt^d)^N)$. Then, the divergence $\Div(F_NX)$ is the continuous linear functional 
on $C^1_b((\bS\fH)^N\times C(\bbt^d)^N)$ defined by the following relation:
\begin{eqnarray*}
&& \la\Div(F_NX),\chi\ra 
\\
&& \hspace{0.5cm} :=-\int_{(\bS\fH)^N\times C(\bbt^d)^N}\la d\chi,X\ra(\psi_1,\ldots,\psi_N,V_1,\ldots,V_N)F_N(d\psi_1\ldots d\psi_N,dV_1\ldots dV_N),
\end{eqnarray*}
for each $\chi\in C^1_b((\bS\fH)^N\times C(\bbt^d)^N)$. 
\end{Def}

Let $t\mapsto X(t,\cdot)$ be a bounded, continuous time-dependent vector field on $(\bS\fH)^N\times C(\bbt^d)^N$. In other words, $X\in C_b( \bbr \times(\bS\fH)^N\times C(\bbt^d)^N;\fH^N\times C(\bbt^d)^N)$, and, 
for each $t\in \bbr$, the map $X(t,\cdot)$ is a vector field on $(\bS\fH)^N\times C(\bbt^d)^N$. We pick $\zeta \in C^\infty(0,+\infty)$ such that
\[
\indc_{[1/2,3/2]}\le \zeta \le\indc_{[1/4,7/4]}\,,
\]
and extend $X$ to $\bbr \times\fH^N\times C(\bbt^d)^N$ by the prescription:
\[
\hat X(\psi_1,\ldots,\psi_N,V_1,\ldots,V_N)=\left(\prod_{j=1}^N\zeta(\|\psi_j\|_\fH)\right)X\left(\tfrac{\psi_1}{\|\psi_1\|_\fH},\ldots,\tfrac{\psi_N}{\|\psi_N\|_\fH},V_1,\ldots,V_N\right)\,.
\]
Assume that $X$ is Lipschitz continuous on $(\bS\fH)^N\times C(\bbt^d)^N$ uniformly in $t\in \bbr$, meaning that
\begin{align*}
\begin{aligned}
& \|X(t,\psi_1,\ldots,\psi_N,V_1,\ldots,V_N)-X(t,\psi'_1,\ldots,\psi'_N,V'_1,\ldots,V'_N)\|_{\fH^N\times C(\bbt^d)^N}
\\
& \hspace{2cm} \le L\sum_{j=1}^N(\|\psi_j-\psi'_j\|_\fH+\|V_j-V'_j\|_{L^\infty})
\end{aligned}
\end{align*}
for some $L\ge 0$. Then the extension $\hat X$ is Lipschitz continuous on $\fH^N\times C(\bbt^d)^N$ uniformly in $t\in \bbr$. By the Cauchy-Lipschitz theorem in Appendix \ref{S-CL}, there exists a global flow $\cS(t,t_0)$ 
on $\fH^N\times C(\bbt^d)^N$ such that $t\mapsto\cS(t,t_0)(\psi_1,\ldots,\psi_N,V_1,\ldots,V_N)$ is the solution of the Cauchy problem
\[
\left\{
\begin{aligned}
{}&\partial_t\cS(t,t_0)(\psi_1,\ldots,\psi_N,V_1,\ldots,V_N)=\hat X(t,\cS(t,t_0)(\psi_1,\ldots,\psi_N,V_1,\ldots,V_N))\,, \quad t > t_0, 
\\
&\cS(t_0,t_0)(\psi_1,\ldots,\psi_N,V_1,\ldots,V_N)=(\psi_1,\ldots,\psi_N,V_1,\ldots,V_N)\,,
\end{aligned}
\right.
\]
for all $(\psi_1,\ldots,\psi_N,V_1,\ldots,V_N)\in\fH^N\times C(\bbt^d)^N$. On the other hand, since
\[
\ba
\psi_1,\ldots,\psi_N\in\bS\fH\implies\hat X(t,\psi_1,\ldots,\psi_N,V_1,\ldots,V_N)=X(t,\psi_1,\ldots,\psi_N,V_1,\ldots,V_N)
\\
\in T_{\psi_1}\mathbf S\mathfrak H\times\ldots\times T_{\psi_N}\mathbf S\mathfrak H\times C(\bbt^d)^N&\,,
\ea
\]
one has
\[
\cS(t,t_0)\left((\bS\fH)^N\times C(\bbt^d)^N\right)\subset(\bS\fH)^N\times C(\bbt^d)^N\qquad\text{ for all }t\in \bbr.
\]
Indeed, decomposing $X=X_s\oplus X_l$ in the product 
\[
T_{(\psi_1,\ldots,\psi_N,V_1,\ldots,V_N)}(\bS\fH)^N\times C(\bbt^d)^N=\left(T_{\psi_1}\mathbf S\mathfrak H\times\ldots\times T_{\psi_N}\mathbf S\mathfrak H\times\{0\}\right)\oplus\left(\{0\}\times C(\bbt^d)^N\right)\,,
\]
one can see that
\[
\tfrac{d}{dt}(\psi_1,\ldots\psi_N)(t)=X_s(t,(\psi_1,\ldots\psi_N)(t))\,,
\]
and
\[
\tfrac{d}{dt}|\psi_j(t)|^2=2\RE(\la X_s(t,(\psi_1,\ldots\psi_N)(t))|(0,\ldots,\psi_j(t),\ldots,0)\ra=0\,,\quad j \in [N]\,.
\]
In particular, the restricted flow $\cS(t,t_0;\cdot)\rstr_{(\bS\fH)^N\times C(\bbt^d)^N}$ depends only on the vector field $X$ on $(\bS\fH)^N\times C(\bbt^d)^N$ and not on its extension $\hat X$ to the ambient 
linear space $\fH^N\times C(\bbt^d)^N$.

\begin{Lem}\lb{L-B.1}
Let $t\mapsto X(t,\cdot)$ be a continuous and bounded, time-dependent vector field on $(\bS\fH)^N\times C(\bbt^d)^N$ that is Lipschitz continuous on $\fH^N\times C(\bbt^d)^N$ uniformly in $t\in \bbr$, and let
$\cS(t,t_0)$ be the flow generated by the vector field $X$ on $(\bS\fH)^N\times C(\bbt^d)^N$. For each initial datum $F^{0}\in\cP((\bS\fH)^N\times C(\bbt^d)^N)$, we set
\be\lb{CharFla}
F(t,\cdot):=\cS(t,0)\#F^{0}\,.
\ee
Then $F$ is a weak solution of the Liouville equation on $(\bS\fH)^N\times C(\bbt^d)^N$:
\begin{equation} \lb{Liouville}
\begin{cases}
\displaystyle \partial_tF+\Div(FX)=0, \quad t > 0, \\
\displaystyle F\rstr_{t=0}=F^{0}.
\end{cases}
\end{equation}
\end{Lem}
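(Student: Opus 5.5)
The plan is to verify the weak formulation directly from the push-forward definition \eqref{CharFla}. By Definition \ref{D-B.1}, a weak solution of \eqref{Liouville} means that for every test function $\chi\in C^1_b((\bS\fH)^N\times C(\bbt^d)^N)$ the map $t\mapsto\int\chi\,F(t)$ is absolutely continuous, hence differentiable a.e., and
\[
\frac{d}{dt}\int\chi\,F(t)=\int\la d\chi,X(t,\cdot)\ra\,F(t)\,,\qquad F(0)=F^0.
\]
The initial condition is immediate, since $\cS(0,0)=\mathrm{id}$. By definition of the push-forward,
\[
\int\chi\,F(t)=\int\chi(\cS(t,0)z)\,F^0(dz),
\]
so the work is to differentiate under the integral sign.

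\textbf{Pointwise derivative.} Fix $z=(\psi_1,\ldots,\psi_N,V_1,\ldots,V_N)$ in the phase space. By the Cauchy--Lipschitz theory of Appendix \ref{S-CL}, applied to the extension $\hat X$, the curve $t\mapsto\cS(t,0)z$ is $C^1$ with values in $\fH^N\times C(\bbt^d)^N$. It stays on $(\bS\fH)^N\times C(\bbt^d)^N$, and its velocity is $X(t,\cS(t,0)z)$, which lies in the tangent space \eqref{TangSpace}. The chain rule for the Fréchet differentiable $\chi$ (differentiable along the submanifold, or equivalently through any $C^1$ extension) then gives
\[
\frac{d}{dt}\chi(\cS(t,0)z)=\la d\chi(\cS(t,0)z),\,X(t,\cS(t,0)z)\ra.
\]

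\textbf{Passing under the integral.} I would avoid differentiating under the integral and work with the integrated form instead. Writing
\[
\chi(\cS(t,0)z)-\chi(z)=\int_0^t\la d\chi,X\ra(s,\cS(s,0)z)\,ds,
\]
the integrand is bounded by $\|d\chi\|_\infty\|X\|_\infty$, because $\chi\in C^1_b$ and $X$ is bounded. It is also jointly measurable in $(s,z)$: continuity of $X$ and $d\chi$ together with continuity of the flow in $(s,z)$, which follows from Gronwall and the uniform Lipschitz bound, makes it continuous. Fubini therefore applies, and integrating against the probability measure $F^0$ gives
\[
\int\chi\,F(t)-\int\chi\,F^0=\int_0^t\int\la d\chi,X(s,\cdot)\ra\,F(s)\,ds,
\]
after undoing the push-forward at time $s$. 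This is the time-integrated weak form, i.e. $\partial_tF+\Div(FX)=0$ in the sense of Definition \ref{D-B.1}, in the sense of distributions in $t$. Since the inner integral $s\mapsto\int\la d\chi,X(s,\cdot)\ra F(s)$ is continuous in $s$ (dominated convergence with the same continuity), the identity even holds pointwise for every $t$.

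\textbf{Main obstacle.} The delicate point is the chain rule on the infinite-dimensional manifold, namely making sense of $\la d\chi,X\ra$ for $\chi$ defined only on $(\bS\fH)^N\times C(\bbt^d)^N$. I would handle this by interpreting $C^1_b$ functions as restrictions of $C^1_b$ functions on a neighbourhood in the ambient space, or via the radial extension already used for $\hat X$. Since the velocity is tangent, the result does not depend on the extension chosen. The remaining points, the measurability of the flow and the justification of Fubini, are routine given the uniform bounds.
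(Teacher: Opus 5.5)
Your proposal is correct and follows the paper's proof: write $\int\chi\,F(t)=\int\chi(\cS(t,0)z)\,F^0(dz)$, apply the chain rule along the flow to get $\la d\chi,X(t,\cdot)\ra$, and undo the push-forward to obtain $-\la\Div(F(t)X(t)),\chi\ra$. The only difference is that the paper differentiates under the integral sign without comment, while you justify the interchange through the time-integrated form, Fubini and the bound $\|d\chi\|_\infty\|X\|_\infty$, so yours is a more careful version of the same argument.
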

\begin{proof}
Let $\chi\in C_b^1((\mathbf S\mathfrak H)^N\times C(\mathbb T^d)^N)$. Then, we have
\begin{align*}
\begin{aligned}
& \frac{d}{dt}\int_{(\bS\fH)^N\times C(\mathbb T^d)^N}\chi(\phi_1\ldots,\phi_N,V_1,\ldots,N)F(t,d\phi_1\ldots d\phi_NdV_1\ldots dV_N) 
\\
& \hspace{0.5cm} =\!\int_{(\mathbf S\mathfrak H)^N\times C(\mathbb T^d)^N}\!\partial_t\chi(\cS(t,\!0)(\phi_1^0,\ldots,\phi_N^0,V_1^0,\ldots,V_N^0))F^0(d\phi_1^0\ldots d\phi_N^0dV_1^0\ldots dV_N^0)
\\
& \hspace{0.5cm} =\!\int_{(\mathbf S\mathfrak H)^N\times C(\mathbb T^d)^N}\!\la d\chi,\!X(t,\cdot)\ra(\cS(t,\!0)(\phi_1^0,\ldots,\phi_N^0,V_1^0,\ldots,V_N^0))F^0(d\phi_1^0\ldots d\phi_N^0dV_1^0\ldots dV_N^0)
\\
& \hspace{0.5cm} =\!\int_{(\mathbf S\mathfrak H)^N\times C(\mathbb T^d)^N}\!\la d\chi,X(t,\cdot)\ra(\phi_1,\ldots,\phi_N,V_1,\ldots,V_N)F(t,d\phi_1\ldots d\phi_NdV_1\ldots dV_N)
\\
& \hspace{0.5cm} =\!-\la\Div(F(t,\cdot)X(t,\cdot)),\chi\ra.
\end{aligned}
\end{align*}
This gives the desired result.
\end{proof}
\begin{Rmk} \label{RB.1}
Solutions of \eqref{Liouville} of the form \eqref{CharFla} will be referred to as ``Lagrangian weak solutions''.
\end{Rmk}

\end{appendix}

\bigskip
\noindent
\textbf{Data availability statement.}
We do not analyse or generate any datasets, because our work proceeds within a theoretical and mathematical approach. One can obtain the relevant materials from the references below.

\smallskip
\noindent
\textbf{Ethics declarations:}

\noindent
\textbf{Competing interests.} The authors declare no competing interests.


\bibliographystyle{amsplain}


\end{document}